\newtheorem{definition}{Definition}[section]				%newtheorem{key.name.of.environment}{displayed.name.of.environment}[division to take into account for the numbering]
\newtheorem{theorem}[definition]{Theorem}				%newtheorem{key.name.of.environment}[key.of.environment to follow for numbering]{displayed.name.of.environment}
\newtheorem{proposition}[definition]{Proposition}
\newtheorem{corollary}[definition]{Corollary}
\newtheorem{lemma}[definition]{Lemma}
\theoremstyle{definition}											%Theorem styles : "plain" is the standard {text in italic, math in normal}, "definition" has {text in normal, math in italic}
\newtheorem{remark}[definition]{Remark}
\numberwithin{equation}{section}
\numberwithin{figure}{section}
\definecolor{darkgreen}{rgb}{0,0.35,0}
\newcommand{\N}{\mathbb{N}}																				%natural numbers
\newcommand{\Z}{\mathbb{Z}}																					%whole numbers
\newcommand{\R}{\mathbb{R}}																					%real numbers
\newcommand{\F}{\mathcal{F}}																					%for sigma-fields
\newcommand{\bE}{\mathbb{E}}																				%E majuscule "blackboard bold", pour les esp?ï¿½rances dans certains contextes
\newcommand{\abs}[1]{\lvert#1\rvert }																		%commande pour la valeur absolue
\newcommand{\Abs}[1]{\left\lvert#1\right\rvert }														%commande pour la valeur absolue
\newcommand{\norm}[1]{\lVert#1\rVert}																	%commande pour norme
\newcommand{\half}{\frac{1}{2}}																				%pour ?ï¿½crire 1/2
\DeclareMathOperator*{\trace}{\textrm{Tr}}															%trace of a matrix
\newcommand{\scalar}[2]{\left\langle #1 \big| #2 \right\rangle}								%produit scalaire
\newcommand{\goesto}{\rightarrow}																			%for convergence
\newcommand{\cI}{\mathcal{I}}
\newcommand{\cN}{\mathcal{N}}
\newcommand{\cP}{\mathcal{P}}
\newcommand{\cR}{\mathcal{R}}
\newcommand{\cT}{\mathcal{T}}
\newcommand{\cY}{\mathcal{Y}}
\newcommand{\cZ}{\mathcal{Z}}
\newcommand{\bR}{\mathbb{R}}
\newcommand{\ud}{\mathrm{d}}
\newcommand{\uds}{\mathrm{d}s}
\newcommand{\udt}{\mathrm{d}t}
\newcommand{\udu}{\mathrm{d}u}
\newcommand{\ti}{{t_{i}}}
\newcommand{\tip}{{t_{i+1}}}
\newcommand{\tj}{{t_{j}}}
\newcommand{\ip}{{i+1}}
\newcommand{\Hip}{H_{i+1}}
\newcommand{\wbHip}{\wb{H}_{i+1}}
\newcommand{\hip}{h_{i+1}}
\newcommand{\wh}[1]{\widehat{#1}}
\newcommand{\wt}[1]{\widetilde{#1}}
\newcommand{\wb}[1]{\overline{#1}}
\newcommand{\rh}[1]{\check{#1}}			%reversed hat
\newcommand{\bEix}{\bE^{\ti,x}} 
\newcommand{\hfmon}{(Mon) }
\newcommand{\hfreg}{(RegZ) }
\newcommand{\hfregY}{(RegY) }
\title{Full-Projection explicit FBSDE scheme for parabolic PDEs with superlinear nonlinearities}
\author{
\normalsize Arnaud Lionnet \\[8pt] 
	    %\small  CMLA\\ 
        \small Ecole Normale Supérieure \\
        \small de Cachan\\  
        \small \\
        \small  first.last at ens-cachan.fr
 \and
 	\normalsize Gon\c calo dos Reis \footnote{G. dos Reis acknowledges support from the \emph{Funda{\c c}$\tilde{\text{a}}$o para a Ci$\hat{e}$ncia e a Tecnologia} (Portuguese Foundation for Science and Technology) through the project UID/MAT/00297/2013 (Centro de Matem\'atica e Aplica\c c$\tilde{\text{o}}$es CMA/FCT/UNL).} \\[8pt]
         \small  University of Edinburgh\\ 
         \small  and \\
	\small  Centro de Matem\'atica e Aplica\c c$\tilde{\text{o}}$es\\
        \small  G.dosReis@ed.ac.uk
 \and
         \normalsize Lukasz Szpruch \footnote{L. Szpruch acknowledges  support from The Alan Turing Institute under the EPSRC grant EP/N510129/1 }  \\[8pt]
         \small  University of Edinburgh\\
         \small  and \\
	     \small  The Alan Turing Institute \\ 
         \small  l.szpruch@ed.ac.uk
}
\date{ }%\currenttime, \ddmmyyyydate\today
\begin{document}
\selectlanguage{english}
\maketitle
%%%%%%%%%%%%%%%%%%%%%%%%%%%%%%%%%%%%%%%%%%%%%%%%%%%%%%%%%%%%%%%%%%%%%%%%%%%%%%%%%%%%%%%%%%%%%%%%%%%%%%%%%%%%%%%%%%%%%%%%%%%%%%%%%%%%%%%%%%%%%%%%%%%
%%%%%%%%%%%%%%%%%%%%%%%%%%%%%%%%%%%%%%%%%%%%%%%%%%%%%%%%%%%%%%%%%%%%%%%%%%%%%%%%%%%%%%%%%%%%%%%%%%%%%%%%%%%%%%%%%%%%%%%%%%%%%%%%%%%%%%%%%%%%%%%%%%%
%%%%%%%%%%%%%%%%%%%%%%%%%%%%%%%%%%%%%%%%%%%%%%%%%%%%%%%%%%%%%%%%%%%%%%%%%%%%%%%%%%%%%%%%%%%%%%%%%%%%%%%%%%%%%%%%%%%%%%%%%%%%%%%%%%%%%%%%%%%%%%%%%%%
%%%%%%%%%%%%%%%%%%%%%%%%%%%%%%%%%%%%%%%%%%%%%%%%%%%%%%%%%%%%%%%%%%%%%%%%%%%%%%%%%%%%%%%%%%%%%%%%%%%%%%%%%%%%%%%%%%%%%%%%%%%%%%%%%%%%%%%%%%%%%%%%%%%
%%%%%%%%%%%%%%%%%%%%%%%%%%%%%%%%%%%%%%%%%%%%%%%%%%%%%%%%%%%%%%%%%%%%%%%%%%%%%%%%%%%%%%%%%%%%%%%%%%%%%%%%%%%%%%%%%%%%%%%%%%%%%%%%%%%%%%%%%%%%%%%%%%%

%%%%%%%%%%%%%%%%%%%%%%%%%%%%%%%%%%%%%%%%%%%%%%%%%%%%%%%%%%%%%%%%%%%%%%%%%%%%%%%%%%%%%%%%%%%%%%%%%%%%%%%%%%%%%%%%%%%%%%%%%%%%%%%%%%%%%%%%%%%%%%%%%%%
\vspace{-0.5cm}
\begin{abstract}

Developing efficient and stable approximations for high dimensional PDEs is of key importance for numerous applications. The language of Forward-Backward Stochastic Differential Equations (FBSDE), with its nonlinear Feynman-Kac formula, allows for purely probabilistic representations of the solution and its gradient for parabolic nonlinear PDEs.  

In this work we build on the recent results of \cite{LionnetReisSzpruch2015} by introducing and studying a Full-Projection explicit time-discretization scheme for the approximation of FBSDEs with non-globally Lipschitz drivers of polynomial growth. We establish convergence rates and we show that, unlike classical explicit schemes, it preserves stability properties present in the continuous-time dynamics, in particular, the scheme is able to preserve the possible coercivity/contraction property of the PDE's coefficients. 

The scheme is then coupled with a quantization-type approximation of the conditional expectations on a space-time grid in order to provide a complete approximation scheme for these FBSDEs/nonlinear PDEs and a full analysis is also carried out. We illustrate our findings with numerical examples.
\end{abstract}
%%%%%%%%%%%%%%%%%%%%%%%%%%%%%%%%%%%%%%%%%%%%%%%%%%%%%%%%%%%%%%%%%%%%%%%%%%%%%%%%%%%%%%%%%%%%%%%%%%%%%%%%%%%%%%%%%%%%%%%%%%%%%%%%%%%%%%%%%%%%%%%%%%%

%%%%%%%%%%%%%%%%%%%%%%%%%%%%%%%%%%%%%%%%%%%%%%%%%%%%%%%%%%%%%%%%%%%%%%%%%%%%%%%%%%%%%%%%%%%%%%%%%%%%%%%%%%%%%%%%%%%%%%%%%%%%%%%%%%%%%%%%%%%%%%%%%%%
{\bf Keywords :}
Numerical approximation of BSDEs, explicit schemes, monotonicity condition, \\ polynomial growth driver, non-explosion.
%%%%%%%%%%%%%%%%%%%%%%%%%%%%%%%%%%%%%%%%%%%%%%%%%%%%%%%%%%%%%%%%%%%%%%%%%%%%%%%%%%%%%%%%%%%%%%%%%%%%%%%%%%%%%%%%%%%%%%%%%%%%%%%%%%%%%%%%%%%%%%%%%%%

%%%%%%%%%%%%%%%%%%%%%%%%%%%%%%%%%%%%%%%%%%%%%%%%%%%%%%%%%%%%%%%%%%%%%%%%%%%%%%%%%%%%%%%%%%%%%%%%%%%%%%%%%%%%%%%%%%%%%%%%%%%%%%%%%%%%%%%%%%%%%%%%%%%
{\bf Mathematics Subject Classifications:} 
65C30% Stochastic differential and integral equations
, 60H35% Computational methods for stochastic equations
, 60H30% Applications of stochastic analysis (to PDE, etc)
.
%%%%%%%%%%%%%%%%%%%%%%%%%%%%%%%%%%%%%%%%%%%%%%%%%%%%%%%%%%%%%%%%%%%%%%%%%%%%%%%%%%%%%%%%%%%%%%%%%%%%%%%%%%%%%%%%%%%%%%%%%%%%%%%%%%%%%%%%%%%%%%%%%%%

%%%%%%%%%%%%%%%%%%%%%%%%%%%%%%%%%%%%%%%%%%%%%%%%%%%%%%%%%%%%%%%%%%%%%%%%%%%%%%%%%%%%%%%%%%%%%%%%%%%%%%%%%%%%%%%%%%%%%%%%%%%%%%%%%%%%%%%%%%%%%%%%%%%
%%%%%%%%%%%%%%%%%%%%%%%%%%%%%%%%%%%%%%%%%%%%%%%%%%%%%%%%%%%%%%%%%%%%%%%%%%%%%%%%%%%%%%%%%%%%%%%%%%%%%%%%%%%%%%%%%%%%%%%%%%%%%%%%%%%%%%%%%%%%%%%%%%%
%%%%%%%%%%%%%%%%%%%%%%%%%%%%%%%%%%%%%%%%%%%%%%%%%%%%%%%%%%%%%%%%%%%%%%%%%%%%%%%%%%%%%%%%%%%%%%%%%%%%%%%%%%%%%%%%%%%%%%%%%%%%%%%%%%%%%%%%%%%%%%%%%%%
\section{Introduction}

The theory of Forward-Backward Stochastic Differential Equations (FBSDE) paves a way for probabilistic numerical methods to approximate solutions to nonlinear parabolic PDEs which are used to describe many biological and physical phenomena. Indeed, the solution $v$ to the PDE 
	\begin{align*}
		\Big( \partial_t v + \half (\sigma \sigma^*) \cdot v_{xx} + b \cdot v_x  \Big)(t,x) = f(\cdot,v,v_x \sigma)(t,x) \
		\quad \text{with} \quad 
		v(T,x)=g(x),
	\end{align*}
is given by solving, for $(s,x_0) \in [0,T] \times \R^d$, for $t \in [s,T]$, the FBSDE system \eqref{equation---canonicSDE}-\eqref{equation---canonicBSDE} given by
	\begin{align}
		\label{equation---canonicSDE}
		X^{s,x_0}_t 
		&
		= x_0 + \int_s^t b(u,X^{s,x_0}_u)\uds + \int_s^t \sigma(u,X^{s,x_0}_u)\ud W_u,
		\\
		\label{equation---canonicBSDE}
		Y^{s,x_0}_t 
		&
		= g(X^{s,x_0}_T) + \int_t^T f(u,X^{s,x_0}_u,Y^{s,x_0}_u,Z^{s,x_0}_u) \udu - \int_t^T Z^{s,x_0}_u \ud W_u.
	\end{align}
and setting $v(t,X^{s,x_0}_t)=Y^{s,x_0}_t$ (in particular $v(s,x_0)=Y^{s,x_0}_s$) and $\nabla_x v(t,X^{s,x_0}_t)\sigma(t,X^{s,x_0}_t)=Z_t^{s,x_0}$ (see \cite{ElKarouiPengQuenez1997}), where $W$ is a Brownian motion. In other words, the theory of FBSDE provides the nonlinear extension to the celebrated Feynman-Kac stochastic representation formula for linear parabolic PDEs (i.e when $f=0$) and opens way to the use of the full Monte-Carlo machinery for such class of PDEs. 

The majority of the results on the numerical methods for FBSDEs relies on the global Lipschitz assumption on $f$, which is often not satisfied. This is the case for a number of important PDEs of reaction-diffusion type, like the Allen--Cahn equation, the FitzHugh--Nagumo equations, the Fisher--KPP equation or the standard nonlinear heat and Schr\"odinger equation (see \cite{Henry1981}, \cite{Rothe1984}  and references), where $f$ is a polynomial in $Y$.

\paragraph*{}
Since the first papers of \cites{Zhang2004,BouchardTouzi2004}, a dynamic literature has addressed the numerical approximation of \eqref{equation---canonicSDE}-\eqref{equation---canonicBSDE}. 
A number of works have been concerned with the time-discretization of the continuous dynamics, introducing Picard-type schemes \cite{BenderDenk2007}, 
second order methods \cite{CrisanManolarakis2012}, Runge--Kutta methods \cite{ChassagneuxCrisan2014} or linear multistep methods \cite{Chassagneux2014}.
A significant effort has also been directed to the methods for approximating the conditional expectations involved in the time-discretization schemes:  
regression on a basis of function \cite{GobetLemorWarin2006}, Malliavin weights \cite{GobetTurkedjiev2016}, chaos decomposition \cite{BriandLabart2014}, quantization \cite{SagnaPages2015}, cubature methods \cite{CrisanManolarakis2012} and spatial-grid approximations \cite{MaShenZhao2008}. 
More recently, \cite{ChassagneuxRichou2015} and \cite{Lionnet2016} have focused on the numerical stability (rather than the convergence) of the standard Bouchard--Touzi--Zhang (BTZ) schemes \cites{Zhang2004,BouchardTouzi2004}.
Given a time grid $\pi = (\ti)$ of modulus $\abs{\pi}=h=\max{\{h_i\}}$ where $h_\ip=\tip-\ti$, used to discretize the time interval $[0,T]$, and given $\theta \in \{0,1\}$,
these BTZ schemes (and more generally $\theta$-schemes with $\theta \in [0,1]$) can formally be described by the recursive application of 
the family of operators 
$\Phi_i^\theta : 	L^p(\F_\tip,\R^n)  \rightarrow L^p(\F_\ti,\R^n) \times L^p(\F_\ti,\R^{n \times d})$
	\begin{align*}
				 \Phi_i^\theta : \cY & \mapsto (Y,Z) = \Big(  \bE_i\big[ \cY + (1-\theta) f(\cY,Z) h \big] + \theta f(Y,Z) \hip  ,   \bE_i\big[  \cY (W_\tip-W_\ti)^* \hip^{-1} \big]  \Big) .
	\end{align*}	
The above-mentioned works mostly assume a Lipschitz driver for the BSDE.  
\cites{LionnetReisSzpruch2015,LionnetReisSzpruch2016} undertook the analysis of approximation schemes for BSDEs when the driver has polynomial growth in the $Y$ variable and satisfy a monotonicity condition. 
While the implicit ($\theta = 1$) and mostly implicit ($\theta \ge 1/2$) schemes were found to be converging, the explicit scheme ($\theta = 0$) can explode when the terminal condition $g$ is sufficiently big.

In general, implicit schemes have better properties than explicit ones, nonetheless, in the context of superlinear monotone growth one wants to avoid implicit schemes altogether as a (usually expensive) secondary implicit equation needs to be solved at each time step. Speedy explicit schemes, especially in Monte-Carlo contexts, have an added appeal for this framework. As a remedy to guarantee the convergence of the explicit schemes while maintaining its explicit nature and thereby its lower computational cost, \cite{LionnetReisSzpruch2016} proposed to replace the driver $f$ by a family of drivers $f^h$, which are of linear growth for each $h$ and relax to $f$ as $h \goesto 0$ (see \cites{HutzenthalerJentzenKloeden2012,ChassagneuxJacquierMihaylov2016} for the SDE context).

In this work we introduce a new converging fully explicit scheme under a general monotonicity assumption and super-linear growth, called \emph{Full-Projection Scheme} (see \eqref{equation---reference--FPS.scheme} below). Notably, we show that, even though explicit, the scheme enjoys \emph{numerical stability} properties that, in general, cannot be obtained by simply modifying $f$ into $f^h$ (see similar observation in \cite{SzpruchZhang2013} in the context of SDEs). In particular, the scheme is able to preserve the possible coercivity/contraction property of the PDE's coefficients. As a additional benefit, the truncation on both driver and process makes the stability analysis of the scheme much more natural.   

%\paragraph*{}
In the language of the $\Phi_i^\theta$ operator above, the Full-Projection Scheme (with $\theta=0$) is such that, at each time step, the input is truncated via a projection $T^h$ relaxing to the identity as $h \goesto 0$. The scheme can be described by the family of operators $(\Phi^{0,h}_i)$ with $\Phi^{0,h}_i = \Phi^{0}_i \circ T^h$.

\paragraph*{}
In section \ref{section---setting} we specify the assumptions, describe our time-discretization scheme and give the scheme's convergence rate. We follow with Section \ref{section---numerical.scheme.and.results} to illustrate the results with numerical simulations and the full error statement. The discretization error is analyzed in section \ref{section---analysis.of.the.time.discretization.scheme}, 
while the numerical error analysis for the full scheme's implementation is carried out in Section \ref{section---numerical.scheme.and.results}.

%%%%%%%%%%%%%%%%%%%%%%%%%%%%%%%%%%%%%%%%%%%%%%%%%%%%%%%%%%%%%%%%%%%%%%%%%%%%%%%%%%%%%%%%%%%%%%%%%%%%%%%%%%%%%%%%%%%%%%%%%%%%%%%%%%%%%%%%%%%%%%%%%%%
%%%%%%%%%%%%%%%%%%%%%%%%%%%%%%%%%%%%%%%%%%%%%%%%%%%%%%%%%%%%%%%%%%%%%%%%%%%%%%%%%%%%%%%%%%%%%%%%%%%%%%%%%%%%%%%%%%%%%%%%%%%%%%%%%%%%%%%%%%%%%%%%%%%
%%%%%%%%%%%%%%%%%%%%%%%%%%%%%%%%%%%%%%%%%%%%%%%%%%%%%%%%%%%%%%%%%%%%%%%%%%%%%%%%%%%%%%%%%%%%%%%%%%%%%%%%%%%%%%%%%%%%%%%%%%%%%%%%%%%%%%%%%%%%%%%%%%%
%\newpage
\section{Assumptions, scheme and main result}
\label{section---setting}

For the system \eqref{equation---canonicSDE}-\eqref{equation---canonicBSDE}, $T > 0$ is a fixed time-horizon and $W$ is a $d$-dimensional Brownian motion on a filtered probability space $(\Omega, (\F_t)_{t \in [0,T]}, P)$ where $(\F_t)$ is the usual augmentation of the natural filtration of $W$.

%//////////////////////////////////////////////////////////////////////////////////////////////////////////////////////////////////////////////////////////////////////////////////////////
%//////////////////////////////////////////////////////////////////////////////////////////////////////////////////////////////////////////////////////////////////////////////////////////
\subsection{Assumptions}
\label{sec:assumptions}

For the coefficient functions in system \eqref{equation---canonicSDE}-\eqref{equation---canonicBSDE} we assume the following conditions hold throughout.
Let $x_0\in\bR^d$; the functions $b : [0,T] \times \R^d \rightarrow \R^d$ and $\sigma : [0,T] \times \R^d \rightarrow \R^{d \times d}$ are the drift and diffusion coefficient of the forward process $X$, and are assumed to be $\half$-H\"older in time and Lipschitz in space;  
$g : \R^d \rightarrow \R^n$ is the terminal condition, assumed to be Lipschitz.

As for the driver function $f : \R^n \times \R^{n \times d}$ of \eqref{equation---canonicBSDE}, it would be possible to consider a driver that also depends on the time $t$ and the position $X_t$ of the forward process, as in \cite{LionnetReisSzpruch2015}, although it is tedious and does not change the global picture. We thus focus on a function of $Y$ and $Z$, for clarity. 
%The following assumptions are assumed to hold throughout. 
We assume that $f$ is monotone with constant $M_y \in \R$, is locally Lipschitz in $y$ with polynomially growing local constant induced by $L_y \ge 0$ and $m \in \N^*$, and is $L_z$-Lipschitz in $z$, $L_z \ge 0$. So for all $y,y',z,z'$ we have
	\begin{itemize}
		\item[] \hfmon : $\scalar{y'-y}{f(y',z)-f(y,z)} \le M_y \abs{y'-y}^2$, %{\,}
		\item[] \hfregY : $\Abs{ f(y',z)-f(y,z) } \le L_y \big( 1 + \abs{y'}^{m-1} + \abs{y}^{m-1} \big) \abs{y'-y}$,
		\item[] \hfreg : $\Abs{ f(y,z')-f(y,z) } \le L_z \abs{z'-z}$.
	\end{itemize}
Note that the case $m=1$ corresponds to a Lipschitz driver, so we generally assume $m \ge 2$.
It follows from these regularity assumptions that $f$ has polynomial growth in $y$ and $z$ of degrees $m$ and $1$ respectively, and has monotone growth.
So for all $y,z$, we can write
	\begin{align} \label{equation---assumptions.growth}
	\begin{aligned}
		\abs{f(y,z)} & \le K + K_y \abs{y}^m + K_z \abs{z}							\qquad \text{and}	\qquad 		
		\scalar{y}{f(y,z)} & \le M + \wh {M}_y \abs{y}^2 + M_z \abs{z}^2.
	\end{aligned}
	\end{align}
The constants in these bounds can be taken to be $K=\abs{f(0,0)}+L_y,K_y=2L_y,K_z=L_z$, while $M=\frac{\abs{f(0,0)}^2}{2\nu}$, $\wh M_y = M_y+\nu$ and $M_z=\frac{L_z^2}{2 \nu}$, for any $\nu > 0$. We sometimes make use of \eqref{equation---assumptions.growth} for transparency of the property being used.

%//////////////////////////////////////////////////////////////////////////////////////////////////////////////////////////////////////////////////////////////////////////////////////////
%//////////////////////////////////////////////////////////////////////////////////////////////////////////////////////////////////////////////////////////////////////////////////////////
\subsection{The Full-Projection Scheme}
\label{sec:fullprojectionscheme}

Let $\pi = (t_i)_{i = 0, \ldots, N}$ be the uniform time-grid (or subdivision) of the interval $[0,T]$ with $N+1$ points $\ti = ih$ for $i \in \{0, \ldots, N\}$ and step size $h = \tip - \ti = T/N = \abs{\pi}$, the modulus of the time-grid. Since $N \ge 1$, we have $h \le T$.
We restrict ourselves to uniform partitions for notational simplicity, although the results would hold for more general partitions.

For the forward process $X$ in \eqref{equation---canonicSDE}, we use the explicit Euler scheme yielding the family $(X_i)_i$ for $i \in \{0,\ldots,N\}$. 
So we define $X_0 = x_0$ and for all $i \in \{0,\ldots,N-1\}$, 
	\begin{align} 	\label{equation---reference--X.Euler.scheme}
		X_\ip = X_i + b(\ti,X_i) h + \sigma(\ti,X_i) \Delta W_\tip,\qquad \text{where} \quad \Delta W_\tip = W_\tip - W_\ti.
	\end{align}
For the backward processes, we consider a modified explicit scheme, called \emph{Full-Projection scheme}. It is initialized with $Y_N = \xi^N = g(X_N)$ and then, for $i$ decreasing from $N-1$ to $0$, we set
	\begin{align}			
	\label{equation---reference--FPS.scheme}
	Y_i = \bE_i\Big[ T^h(Y_\ip) + f\big(T^h(Y_\ip),Z_i\big) h \Big]
	\quad\text{and}\quad
			Z_i = \bE_i\big[  T^h(Y_\ip) \Hip^* \big] .
	\end{align}
In the equation above, $\bE_i := \bE[\cdot|\F_\ti]$.
The map $T^h$ is a projection operator on the ball of radius $R^h = R_0 h^{-\alpha}$ in $\R^n$, for some constant $\alpha  > 0$.
Typically, it could be given by $T^h(y) = \min(1,R^h \abs{y}^{-1}) y$. 
The properties of $T^h$ that we will use are: $\abs{T^h(y)} \le \abs{y}$, $\abs{T^h(y)} \le R^h$ for all $y \in \R^n$, $T^h(y)=y$ when $\abs{y} \le R^h$,
and $T^h$ is $1$-Lipschitz. 
In section \ref{section---numerical.scheme.and.results} we use a mollification of it, which coincide with the identity inside the ball of radius $R^h$, 
coincide with the projection outside the ball of radius $R_\epsilon^h = R_0 h^{-\alpha}+\epsilon$ for some small $\epsilon$, while remaining $1$-Lipschitz. 
The weight $\Hip$ is a $\R^{d \times 1}$-valued random variable that satisfies:
	\begin{enumerate}
		\item[a.] $\Hip$ is independent from $\F_\ti$ and is a martingale increment: $\bE_i[\Hip]=0$,
		\item[b.] %$\bE_i[\Hip \Hip^*] = \frac{\Lambda I_d}{h}$ and so $\bE_i[\abs{\Hip}^2] = \frac{\Lambda d}{h}$, where $\Lambda = \Lambda(h) \le 1$ and $I_d$ is the identity of $\R^d$,
			$\bE_i[\Hip \Hip^*] = \Lambda h^{-1} I_d$, so $\bE_i[\abs{\Hip}^2] = \Lambda d h^{-1}$, where $\Lambda = \Lambda(h) \le 1$ and $I_d = \text{Identity}_{\R^d}$,
		\item[c.] There exists a positive constant $C$ such that
			\begin{align}
			\label{equation---Error.of.approximating.by.Hip}
				\max_{0 \le i \le N-1} \bE\bigg[ \Abs{ \frac{\Delta W_\tip}{h} - \Hip }^2 \bigg] \le C h.
			\end{align}
	\end{enumerate}
A typical choice is naturally $\Hip = {\Delta W_\tip}/{h}$. This guarantees the convergence of the scheme. However, in order to have numerical stability, it is required to truncate the Brownian increment, and so we take $\Hip = {T^{r^h}(\Delta W_\tip)}/{h}$, where $T^{r^h}$ is a truncation as above, on the ball of radius $r^h = \sqrt{2h} \ln(1/h)$ in $\R^d$ (see, e.g., \cites{Lionnet2016,LionnetReisSzpruch2016}, Appendix).

Note that, at each step, the scheme above is the composition of the explicit BTZ scheme with the truncation $T^h$. It may be useful to keep track of a slightly different object, and define $(\wt Y_i,\wt Z_i)$ as follows: $\wt Y_N = T^h(g(X_N))$ and for $i$ decreasing from $N-1$ to $0$,
	\begin{align}		
	\label{equation---reference--FPS.scheme.alternative}
			\wt Y_i = T^h\Big( \bE_i\big[ \wt Y_\ip + f(\wt Y_\ip, \wt Z_i) h \big] \Big)
			\quad\textrm{and}\quad
			\wt Z_i = \bE_i\big[  \wt Y_\ip \Hip^* \big]
	\end{align}
At times is useful to think of the schemes as a composition of maps ---the steps of the schemes. 
Let us first describe the explicit ($\theta=0$) BTZ scheme for which one step is given by map $\Phi_i^0 = (\Phi_i^{0,Y},\Phi_i^{0,Z})$, defined as 
$\Phi_i^0 : L^p(\R^n,\F_\tip) \rightarrow L^p(\R^n) \times L^p(\R^{n \times d})$
	\begin{align*}
		\Phi_i^0:
		%: 	
			%\begin{aligned}
				%L^p(\R^n,\F_\tip) & \rightarrow L^p(\R^n) \times L^p(\R^{n \times d})
				%\\
				\cY & \mapsto (Y,Z) = \Big(  \bE_i\big[ \cY + f(\cY,Z) h \big] \ , \  \bE_i\big[  \cY \Hip^* \big]  \Big) .
			%\end{aligned} 
	\end{align*}	
A step of the scheme \eqref{equation---reference--FPS.scheme} is then $\Phi_i := \Phi^0_i \circ T^h$, and the family of random variables $(Y_i,Z_i)_i$ is produced recursively by $(Y_i,Z_i)=\Phi_i(Y_\ip)$. A step of \eqref{equation---reference--FPS.scheme.alternative} is $(T^h \circ \Phi^{0,Y}_i,\Phi^{0,Z}_i)$.

Using this formalism, one can easily obtain the following properties:
	\begin{enumerate}
		\item $(Y_i,Z_i) = \Phi^0_i (\wt Y_\ip) $ for all $i \in \{0,\ldots,N-1\}$,
		\item $\wt Y_i = T^h (Y_i) $ for all $i \in \{0,\ldots,N\}$,
		\item $\wt Z_i = \Phi^{0,Z}_i(\wt Y_\ip) = \Phi^{0,Z}_i(T^h (Y_\ip)) = Z_i$ for all $i \in \{0,\ldots,N-1\}$. 
	\end{enumerate}
The pre-truncation or post-truncations, i.e. schemes \eqref{equation---reference--FPS.scheme} or \eqref{equation---reference--FPS.scheme.alternative}, are thus essentially equivalent.

\subsection{Convergence of the Full-Projection Scheme}

As is usual for the error analysis of BSDE schemes, we study the squared total errors
	\begin{align*}
		\mathrm{ERR}^Y(\pi)^2 = \max_{i = 0, \ldots, N} \bE\big[ \abs{Y_\ti - Y_i}^2 \big]
		\qquad \text{and} \qquad 
		\mathrm{ERR}^Z(\pi)^2 = \sum_{i = 0}^{N-1} \bE\big[ \abs{\wb Z_\ti - Z_i}^2 \big] h,
	\end{align*}
where the random variables $\wb Z_\ti$ are given by $\wb Z_\ti= h^{-1} \bE_i[\int_\ti^\tip Z_t \udt]$.
The square error between the continuous-time solution $(Y,Z)_{t\in[0,T]}$ and the discrete family $(Y_\ti,\wb Z_\ti)_{\ti\in\pi}$ is well understood, following from the path-regularity theorem (see \cite{LionnetReisSzpruch2016}*{Theorem A.2}), and is of order $h$.
The following theorem thus asserts the convergence of the time-discretization scheme \eqref{equation---reference--FPS.scheme}.
	\begin{theorem}
	\label{proposition---MainTheorem}
		There exists a constant $C$ (not depending on $\pi$) such that 
			\begin{align*}
				\mathrm{ERR}^Y(\pi)^2 + \mathrm{ERR}^Z(\pi)^2 \le C \, h.
			\end{align*}
	\end{theorem}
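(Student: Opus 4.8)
The plan is to adapt the standard BSDE error-analysis machinery (one-step estimates propagated backward via discrete Gronwall) to the full-projection scheme, where the key new feature is that the truncation $T^h$ must be shown to be "inactive" with overwhelming probability, so that it does not degrade the rate. First I would set up the usual decomposition: introduce the continuous-time solution $(Y,Z)$, its discretization $(Y_{t_i},\wb Z_{t_i})$, and compare with $(Y_i,Z_i)$ from \eqref{equation---reference--FPS.scheme} (equivalently, using properties 1--3, with the post-truncated family $(\wt Y_i,\wt Z_i)$ of \eqref{equation---reference--FPS.scheme.alternative} whenever that is more convenient). Along the way I would need a priori moment bounds on $(Y_i,Z_i)$: since $\abs{T^h(Y_{i+1})}\le R^h = R_0 h^{-\alpha}$, the driver evaluated inside the scheme is of linear growth for fixed $h$, which immediately gives that $(Y_i,Z_i)$ are $L^p$-bounded, but with constants blowing up in $h$ unless one is careful — so really I would want uniform-in-$h$ $L^p$ bounds on $\bE[\abs{Y_i}^p]$, obtained by exploiting the monotonicity condition \hfmon together with \eqref{equation---assumptions.growth} exactly as in \cite{LionnetReisSzpruch2015}, and crucially these bounds must be at worst polynomial in a way that is dominated by the choice of $R^h$.

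The core estimate is the one-step error. Writing $\delta Y_i = Y_{t_i} - Y_i$ and $\delta Z_i = \wb Z_{t_i} - Z_i$, I would expand $\abs{\delta Y_i}^2$ using the scheme's defining identity together with the martingale representation of the continuous part on $[t_i,t_{i+1}]$; the $Z$-component is controlled through the conditional-variance identity $\bE_i[\abs{\delta Z_i}^2]h \le \bE_i[\abs{\delta Y_{i+1}^{\mathrm{proj}}}^2 - \abs{\bE_i[\ldots]}^2] + (\text{error of replacing }\Delta W/h\text{ by }H_{i+1})$, where property c, \eqref{equation---Error.of.approximating.by.Hip}, contributes an $O(h)$ term. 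The driver difference $f(T^h(Y_{i+1}),Z_i) - f(Y_{t_i},\wb Z_{t_i})$ is split into a $y$-part and a $z$-part: the $z$-part is handled by \hfreg and absorbed (with a small constant) into the $Z$-error; the $y$-part is where monotonicity enters, giving a term $\le M_y\abs{\delta Y_i}^2 h$ plus a local-Lipschitz remainder from \hfregY of the form $L_y(1+\abs{Y_{i+1}}^{m-1}+\ldots)$ that is controlled by the uniform $L^p$ moment bounds and Young's inequality. Assembling, and using the elementary inequality $\abs{a}^2 \le (1+h)\abs{b}^2 + (1+1/h)\abs{a-b}^2$ to pass the conditional expectation through, one gets $\bE[\abs{\delta Y_i}^2] \le (1+Ch)\bE[\abs{\delta Y_{i+1}^{\mathrm{proj}}}^2] + C h \cdot(\text{one-step discretization errors}) - c\,\bE[\abs{\delta Z_i}^2]h$.

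The one genuinely new step — and the one I expect to be the main obstacle — is controlling the difference between $\delta Y_{i+1}^{\mathrm{proj}} := Y_{t_i}$-side versus $T^h(Y_{i+1})$, i.e. bounding $\bE[\abs{Y_{t_{i+1}} - T^h(Y_{i+1})}^2]$ by $\bE[\abs{\delta Y_{i+1}}^2]$ up to acceptable error. Since $T^h$ is $1$-Lipschitz and $T^h(Y_{t_{i+1}}) = Y_{t_{i+1}}$ on the event $\{\abs{Y_{t_{i+1}}}\le R^h\}$, one writes $\abs{Y_{t_{i+1}} - T^h(Y_{i+1})} \le \abs{T^h(Y_{t_{i+1}}) - T^h(Y_{i+1})} + \abs{Y_{t_{i+1}}}\1_{\{\abs{Y_{t_{i+1}}}>R^h\}} \le \abs{\delta Y_{i+1}} + \abs{Y_{t_{i+1}}}\1_{\{\abs{Y_{t_{i+1}}}>R^h\}}$. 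The tail term is then bounded, via Markov/Cauchy--Schwarz and the (uniform-in-$h$) higher moments of $Y$ — which are finite because $g$ is Lipschitz and $f$ is monotone with polynomial growth, so $\sup_t \bE[\abs{Y_t}^q]<\infty$ for all $q$ — by $\bE[\abs{Y_{t_{i+1}}}^{2q}]^{1/q}\, P(\abs{Y_{t_{i+1}}}>R^h)^{1-1/q} \le C (R^h)^{-(q-2)}= C R_0^{-(q-2)} h^{\alpha(q-2)}$, which is $O(h)$ as soon as $q$ is chosen large enough that $\alpha(q-2)\ge 1$. Thus the truncation costs only an $O(h)$ perturbation per step and, after summing over the $N = T/h$ steps, still $O(h)$ overall — this is precisely why a projection radius growing like $h^{-\alpha}$ is compatible with the rate $h$. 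Once this is in hand, the final step is routine: apply the discrete Gronwall lemma to $\bE[\abs{\delta Y_i}^2]$, starting from $\bE[\abs{\delta Y_N}^2] = \bE[\abs{g(X_{t_N}) - g(X_N)}^2]\le C h$ (Lipschitz $g$ and the $O(h)$ strong error of the Euler scheme \eqref{equation---reference--X.Euler.scheme}), conclude $\mathrm{ERR}^Y(\pi)^2 \le Ch$, and then recover $\mathrm{ERR}^Z(\pi)^2 \le Ch$ by summing the telescoped $Z$-terms that were kept on the good side of the one-step inequality.
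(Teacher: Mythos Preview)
Your proposal is essentially correct and rests on the same ingredients as the paper: a one-step estimate driven by monotonicity, a Markov--inequality tail bound showing that the projection $T^h$ is inactive except on an event of negligible probability (using the uniform higher moments of the true solution $Y_t$), and a discrete Gronwall to globalize. The ``genuinely new step'' you isolate --- bounding $\abs{Y_{t_{i+1}}-T^h(Y_{i+1})}\le \abs{\delta Y_{i+1}}+\abs{Y_{t_{i+1}}}\1_{\{\abs{Y_{t_{i+1}}}>R^h\}}$ and then using $\bE[\abs{Y_{t_{i+1}}}^q]<\infty$ with Markov to get $O(h^{\alpha q/2})$ --- is exactly the mechanism the paper uses.

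The difference is organizational but worth noting. You run a single one-step estimate comparing $Y_{t_i}$ directly with $Y_i$, so the propagation and the truncation tail are mixed together. The paper instead inserts the intermediate point $(\wh Y_i,\wh Z_i)=\Phi_i(Y_{t_{i+1}})$ and splits
\[
Y_{t_i}-Y_i=\underbrace{\Psi_i(Y_{t_{i+1}})-\Phi_i(Y_{t_{i+1}})}_{\tau_i\ \text{(local error)}}+\underbrace{\Phi_i(Y_{t_{i+1}})-\Phi_i(Y_{i+1})}_{\text{propagation}}.
\]
In the propagation term \emph{both} inputs are fed through $T^h$, so the driver's local Lipschitz constant is bounded by $L_y(1+2(R^h)^{m-1})$; the squared-driver term is then $\le C\,(R^h)^{2(m-1)}h\cdot\abs{\delta T^h Y_{i+1}}^2$, and the choice $\alpha\le\tfrac{1}{2(m-1)}$ makes $(R^h)^{2(m-1)}h=O(1)$. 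This yields a \emph{clean} stability inequality $\abs{\delta Y_i}^2+\tfrac18\abs{\delta Z_i}^2h\le e^{ch}\bE_i[\abs{\delta Y_{i+1}}^2]$ with no remainder --- a point the paper emphasizes (their ``true stability'' remark). The Markov tail estimate is then confined entirely to the local error $\tau_i$ (the terms $\tau^{Y,t}_i,\tau^{Z,t}_i$). By contrast, in your direct route the same tail mechanism has to absorb the $\abs{Y_{t_{i+1}}}^{m-1}$ piece of the squared driver difference as well (on $\{\abs{Y_{t_{i+1}}}>R^h\}$); this works, but is messier, and your passing remark that the ``local-Lipschitz remainder \ldots is controlled by the uniform $L^p$ moment bounds and Young's inequality'' undersells where the real control comes from.

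One small slip: monotonicity gives $\le M_y\abs{Y_{t_{i+1}}-T^h(Y_{i+1})}^2h$ (the input difference), not $M_y\abs{\delta Y_i}^2h$.
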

We postpone of the proof of this theorem to Section \ref{sec:proofofConvTheorem}.
% after first proving a number of integrability and stability properties for the numerical scheme), nonetheless, 
We present immediately the implications of the scheme we propose in terms of the numerical approximations it produces.

%%%%%%%%%%%%%%%%%%%%%%%%%%%%%%%%%%%%%%%%%%%%%%%%%%%%%%%%%%%%%%%%%%%%%%%%%%%%%%%%%%%%%%%%%%%%%%%%%%%%%%%%%%%%%%%%%%%%%%%%%%%%%%%%%%%%%%%%%%%%%%%%%%%
%%%%%%%%%%%%%%%%%%%%%%%%%%%%%%%%%%%%%%%%%%%%%%%%%%%%%%%%%%%%%%%%%%%%%%%%%%%%%%%%%%%%%%%%%%%%%%%%%%%%%%%%%%%%%%%%%%%%%%%%%%%%%%%%%%%%%%%%%%%%%%%%%%%
%%%%%%%%%%%%%%%%%%%%%%%%%%%%%%%%%%%%%%%%%%%%%%%%%%%%%%%%%%%%%%%%%%%%%%%%%%%%%%%%%%%%%%%%%%%%%%%%%%%%%%%%%%%%%%%%%%%%%%%%%%%%%%%%%%%%%%%%%%%%%%%%%%%
%\newpage
\section{Analysis and numerical simulations for the full scheme} 		
\label{section---numerical.scheme.and.results}

%/////////////////////////////////////////////////////////////////////////
%/////////////////////////////////////////////////////////////////////////
\subsection{Full scheme including approximation of the conditional expectations}
\label{sec:BasicApproxofExpectations}

In order to implement the scheme \eqref{equation---reference--FPS.scheme}, we need to also approximate the conditional expectations involved at each step. 
%Several methods can be used for this.
We use a quantization method (see \cites{Chassagneux2014,ChassagneuxRichou2016,SagnaPages2015}).
Let us describe briefly how it works in the case where the forward process $X$ is the standard Brownian motion, as will be the case in the examples. 

Over the time grid $\pi = (\ti)$, one replaces the Markov chain $(W_\ti)_{i=0, \ldots, N}$, by the tree-supported Markov chain $(\wb W_\ti)_{i = 0, \ldots, N}$, where $\wb W_\tip = \wb W_\ti + \wb{\Delta W}_\tip$. 
Here, the $\wb{\Delta W}_\tip$ are i.i.d. random variables with symmetric and finitely supported distribution matching the first moments of the $\cN(0,h)$ distribution of $\Delta W_\tip$.
More precisely, in our examples below, we choose a recombining trinomial tree, thus replacing $\cN(0,h)$ by the distribution putting weights $(p_{-1} \ p_{0} \ p_{1}) = (\frac16 \ \frac23 \ \frac16)$ on the points $g_{-1}=-\sqrt{3h}$, $g_0=0$ and $g_1=\sqrt{3h}$. It has the same moments as $\cN(0,h)$ of order $0$ up to order $q=5$. 
Let us also denote $x_l = l \, \sqrt{3h}$, for any $l \in \Z$. 

For the backward processes, we initialize the scheme with $\wb Y_N = \xi^N = g(\wb W_T)$ and then, for $i=N-1$ down to $0$, $\wb Y_\ip$ is used to compute $(\wb Y_i,\wb Z_i)$ as
	\begin{align*}	
		\wb Y_i = \bE_i\Big[ T^h(\wb Y_\ip) + f\big(T^h(\wb Y_\ip),\wb Z_i\big) \hip \Big]
		\quad \text{and} \quad
		\wb Z_i = \bE_i\Big[ T^h(\wb Y_\ip) \wbHip^* \Big].
	\end{align*}
Here, $\wbHip = T^{r^h}(\wb{\Delta W}_\tip)\, h^{-1}$ with $r^h$ defined immediately after \eqref{equation---Error.of.approximating.by.Hip}. 
For all $i$ there exists measurable functions $\wb y_i : \Gamma_i \rightarrow \R^n$ and $\wb z_i : \Gamma_i \rightarrow \R^{n \times d}$, where $\Gamma_i = \sqrt{3h} \cdot \{-i,\ldots,i\} = \{x_{-i},\ldots,x_i\}$, such that $\wb Y_i = \wb y_i(\wb W_\ti)$ and $\wb Z_i = \wb z_i(\wb W_\ti)$ due to the Markovian framework and properties of the conditional expectation.
Thus, at each $\ti \in \pi^N$ and $\wb W_\ti = x_k$, for $k \in \{-i, \ldots , i \}$, the conditional expectation reduces to a finite sum, 
	\begin{align*}
		\wb Y_i &= \wb y_i(\wb W_\ti = x_k) 
							= \sum_{j \in \{-1, 0, 1\}} p_j \, . \, \Big( T^h(\wb y_\ip(x_{k+j}) + f\big(T^h(\wb y_\ip(x_{k+j})),\wb z_i(x_k)\big) \ h	\Big)			
							\\
		\wb Z_i &= \wb z_i(\wb W_\ti = x_k) 
							= \sum_{j \in \{-1, 0, 1\}} p_j \, . \, \Big( T^h(\wb y_\ip(x_{k+j})) \ {T^{r^h}(g_j)}{h}^{-1} \Big) .
	\end{align*}
We state the result on the error for the approximation of the conditional expectations in Section \ref{sec:mainresultNumerics} below. We now present our numerical experiments to highlight the gains of our results.

%//////////////////////////////////////////////////////////////////////////////////////////////////////////////////////////////////////////////////////////////////////////////////////////
%//////////////////////////////////////////////////////////////////////////////////////////////////////////////////////////////////////////////////////////////////////////////////////////
\subsection{Numerical results} 

%***********************************************************************************************************************************************************************************************************
\subsubsection{Convergence testing}

We consider the FBSDE \eqref{equation---canonicSDE}-\eqref{equation---canonicBSDE}, with $T=1$, $x_0=0$, $b=0$ and $\sigma=1.5$ (so that $X=\sigma W$), $f(y,z)=-y^3$ and $g(x)={x}^2$. 
We approximate the solution using the standard implicit and explicit BTZ schemes, as well as the Full-Projection scheme, with $N \in \{5,10,15,20,30,40,50,60,70,80\}$ time steps. 
As we do not have an explicit solution to the BSDE \eqref{equation---canonicBSDE}, we use as a proxy the average of the values returned by the implicit scheme and the Full-Projection scheme when $N=120$.

Figure \ref{Figure---convergence} depicts the behaviour of the returned value $Y^N_0$ versus $N$, as well as the computation time versus the error.
\begin{figure}[th]
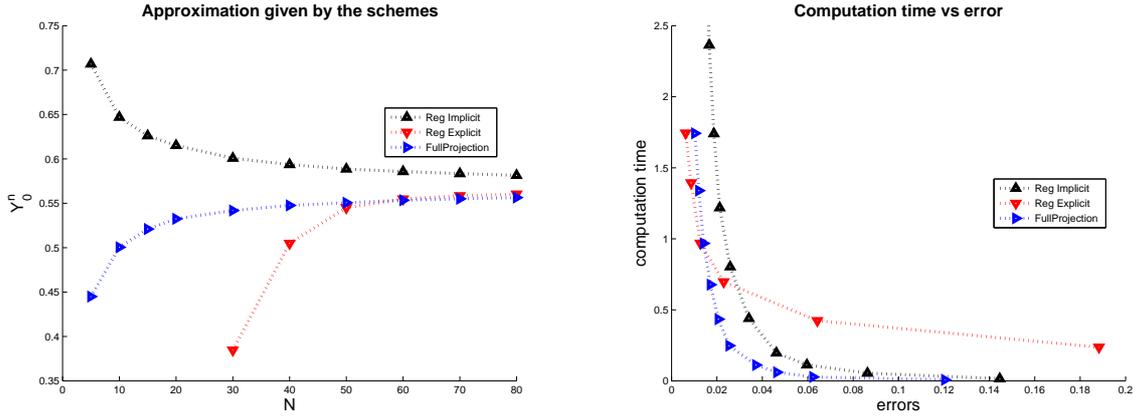

	\centering
	\subfigure
	{	\includegraphics[scale=0.39]{Pic-Errors-Y0vsN}	}
	\subfigure
	{	\includegraphics[scale=0.39]{Pic-Errors-CTvsERR}	}
	\caption{ Errors versus $N$ and computation time versus error. }
	\label{Figure---convergence}
\end{figure}
We see that the Full-Projection scheme, being an explicit scheme, provides similar errors as the implicit scheme but at a lower computational cost. However, it does not explode for values of $N$ that are too small (explosion characterized by a \emph{NaN} value and the missing points on the red curve).

%***********************************************************************************************************************************************************************************************************
\subsubsection{Numerical stability}

We now consider the FBSDE \eqref{equation---canonicSDE}-\eqref{equation---canonicBSDE}, with $T=1$, $x_0=0$, $b=0$ and $\sigma=2.5$ (so that $X=\sigma W$), $f(y,z)=-y-y^3$ and $g(x)=-7 \lor x \land 7$.
On Figure \ref{Figure---numerical.stability}, for various values of $N$, we plot the maximum and minimum of the random variable $Y^N_i$, for $i=0, \ldots, N$.
\begin{figure}[!ht]
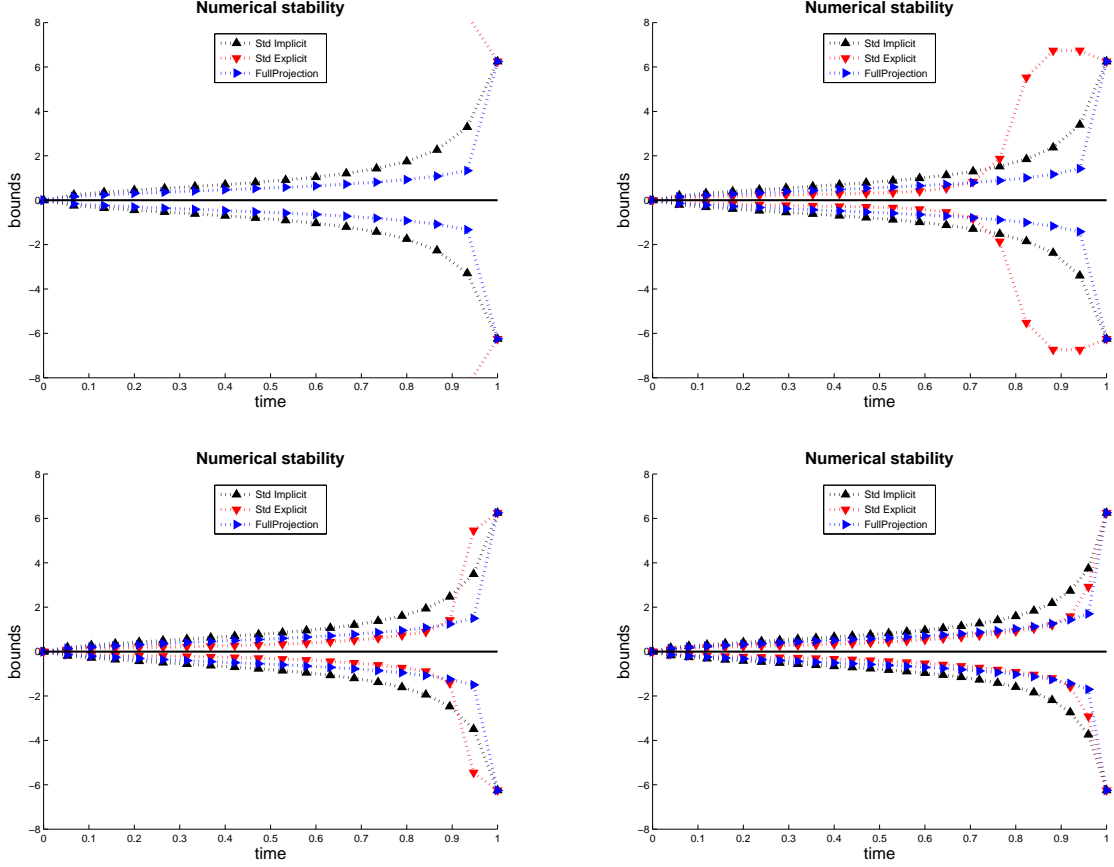

	\centering
	\subfigure
	{	\includegraphics[scale=0.39]{Pic-NumStab-15}	}
	\subfigure
	{	\includegraphics[scale=0.39]{Pic-NumStab-17}	}
	\subfigure
	{	\includegraphics[scale=0.39]{Pic-NumStab-19}	}	
	\subfigure
	{	\includegraphics[scale=0.39]{Pic-NumStab-25}	}	
	\caption{Maximum and minimum processes associated with $(Y^N_i)_{i=0, \ldots, N}$, for $N=15$, $17$, $19$, $25$, read respectively from left-to-right, top-to-bottom.}
	\label{Figure---numerical.stability}
\end{figure}
We see that when the number of time steps is too small, the explicit may explode or not be numerically stable (characterized here by a non-convexity or non-concavity of the curves).

More precisely, we have here a driver satisfying \hfmon with $M_y = -1 < 0$.
Since $f(0)=0$, the process $(0,0)$ is solution for the terminal condition $g'=0$.
As a consequence, the continuous-time dynamics satisfies for some $c<0$ the bounds $\norm{Y_t}_\infty \le e^{c (T-t)} \norm{\xi}_\infty \le \norm{\xi}_\infty$.
Here, we see that for the implicit and the Full-Projection schemes, the corresponding bounds $\norm{Y^N_i}_\infty \le e^{c' (T-\ti)} \norm{\xi^N}_\infty \le \norm{\xi^N}_\infty$ are satisfied for some $c'<0$.
This is not the case for the explicit scheme.
When $N=15$ (top left picture) the bound $\norm{Y^N_i}_\infty \le \norm{\xi^N}_\infty$ is violated and the scheme in fact explodes. 
When $N=17$, this same bound is violated but eventually the scheme does not explode. This behaviour is specific to a BSDE (for an ODE, it could not happen), and is linked to interpreting the random variable through its $\infty$-norm, thus not taking into account its distribution.
When $N=19$, this bound is satisfied but the stronger bound $\norm{Y^N_i}_\infty \le e^{c' (T-\ti)} \norm{\xi^N}_\infty$ is not (or rather, it is, but with a much worse $c''$). 
It becomes the case only when $N$ is high enough.
By contrast, the implicit and the Full-Projection explicit schemes display this numerical stability property for all partitions, even the coarser ones.

%******************************************************************************************
%*****************************************************************************************************************
\subsection{The general scheme and the error estimate}
\label{sec:mainresultNumerics}

Previous works on polynomial-growth FBSDEs \cites{LionnetReisSzpruch2015,LionnetReisSzpruch2016} focused only on estimating the time-discretization error. We carry here the analysis of the full error, due to both the time-discretization and the approximation of the conditional expectations. Unlike \cite{Chassagneux2014}, we do so for a general forward process $X$, which recovers the case of $X$ being a linear Brownian motion. %ZhaoChenPeng2006

In all generality, for a given $\eta > 0$, $x_0\in \bR^d$ and $M \in \N$, we consider the spatial grid
	\begin{align*}
		\Gamma = \big\{ x_0 + k \eta \ \Big|\ k \in \Z^d \ \text{ and } \ \abs{k}_\infty \le M\big\} .
	\end{align*}
We denote by $\Pi$ the projection from $\R^d$ on the grid $\Gamma$ in the distance associated with the $L^\infty$-norm (i.e. component-wise projection).
For all $x$ in the convex hull of $\Gamma$ we have  $\abs{x -\Pi(x)} \le \eta$.
The forward process is approximated as follows: it is initialized with $\wb{X}_0 = x_0$ and then, for $i=0, \ldots, N-1$ 
	\begin{align}		\label{equation---reference--X.scheme--fully.discretized}
		\wb X_\ip = \Pi \Big( \wb X_i + b(\ti,\wb X_i) h + \sigma(\ti,\wb X_i) \wb{\Delta W}_\tip \Big).
	\end{align}
For the backward processes, just as previously, we initialize the scheme with $\wb Y_N = \xi^N = g(\wb X_N)$ and then, for $i=N-1$ down to $0$, $\wb Y_\ip$ is used to compute $(\wb Y_i,\wb Z_i)$ as
	\begin{align}		
	\label{equation---reference-YZ.scheme--fully.discretized} 
		\wb Y_i = \bE_i\big[ T^h(\wb Y_\ip) + f\big(T^h(\wb Y_\ip),\wb Z_i\big) \, h \big]
		\quad\textrm{and}\quad
		\wb Z_i = \bE_i\big[ T^h(\wb Y_\ip) \wbHip^* \big].
	\end{align}

\paragraph*{}
We know from the nonlinear Feynman--Kac formula  (see \cite{LionnetReisSzpruch2015}) that the solution to the FBSDE \eqref{equation---canonicSDE}-\eqref{equation---canonicBSDE} can be represented as $Y_t = Y^{0,x_0}_t = y(t,X^{0,x_0}_t)$ and $Z_t = Z^{0,x_0}_t = z(t,X^{0,x_0}_t)$. The function $y$ here is the solution to the PDE
	\begin{align}
	\label{eq:PDEforErrorEstimate}
		\partial_t y + \half \trace\big[\partial^2_{xx}y \ \sigma \sigma^*\big] + \partial_x y \ b + f(y,\partial_x y \ \sigma) = 0 \ \ \text{on }\ [0,T] \times \R^d
	\end{align}
with terminal condition $y(T,\cdot)=g$, and $z=\partial_x y \, \sigma$. 

The error estimate for the full scheme is then the following. 
	\begin{theorem} 	
	\label{proposition---weak.error.estimate.global}
	Assume that $f$, $y$ and $z$ are of class $C^3$ with polynomially growing derivatives. Then there exists a $C \ge 0$ such that 
			\begin{align*}
				\abs{Y_0 - \wb Y_0} \le C h + C \eta h^{-1}.
			\end{align*}
	\end{theorem}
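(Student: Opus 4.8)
The plan is to telescope the error through two intermediate discretizations and to exploit the smoothness of $(f,y,z)$ so as to obtain the \emph{weak} rate $O(h)$ rather than the strong rate $O(\sqrt h)$ that Theorem~\ref{proposition---MainTheorem} alone would yield. Write $Y_0=y(0,x_0)$ for the exact value, let $Y_0^{\pi}$ be the time-$0$ output of the time-discretization scheme \eqref{equation---reference--FPS.scheme} (exact conditional expectations, Gaussian increments, forward Euler \eqref{equation---reference--X.Euler.scheme}), and introduce the auxiliary \emph{tree scheme} $(X_i^{\mathrm{tr}},Y_i^{\mathrm{tr}},Z_i^{\mathrm{tr}})$ obtained from \eqref{equation---reference--FPS.scheme} by replacing every Gaussian increment $\Delta W_\tip$ by the trinomial increment $\wb{\Delta W}_\tip$, but \emph{without} applying the spatial projection $\Pi$ in the forward recursion. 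Then
\[
 \abs{Y_0-\wb Y_0}\ \le\ \abs{Y_0-Y_0^{\pi}}\ +\ \abs{Y_0^{\pi}-Y_0^{\mathrm{tr}}}\ +\ \abs{Y_0^{\mathrm{tr}}-\wb Y_0},
\]
and I would bound the first two terms by $Ch$ and the last by $C\eta h^{-1}$. Throughout one uses the uniform moment bounds $\sup_h\sup_i\bE[\abs{X_i}^p+\abs{X_i^{\mathrm{tr}}}^p+\abs{\wb X_i}^p]<\infty$ for every $p$ (from the Lipschitz growth of $b,\sigma$ and bounded or Gaussian increments), and the discrete stability / Gronwall estimate for the Full-Projection recursion established in Section~\ref{section---analysis.of.the.time.discretization.scheme}.

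For the weak time-discretization error $\abs{Y_0-Y_0^{\pi}}$, set $Y_i^{\circ}:=y(\ti,X_i)$ and $Z_i^{\circ}:=z(\ti,X_i)$. Using the PDE \eqref{eq:PDEforErrorEstimate}, the identity $z=\partial_xy\,\sigma$, and an It\^o--Taylor expansion of $y(\tip,X_\tip)$ over $[\ti,\tip]$ — legitimate thanks to the $C^3$-regularity with polynomially growing derivatives together with the moment bounds — one shows that $(Y_i^{\circ},Z_i^{\circ})$ satisfies the recursion \eqref{equation---reference--FPS.scheme} up to a one-step residual of order $h^2$ (in the sense appropriate to the $Y$- and $Z$-components, the latter measured after conditional expectation against the increment weight). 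Here one also checks that $T^h$ is inactive on $Y_{i+1}^{\circ}$ outside an event of probability smaller than any power of $h$ (Markov inequality, polynomial growth of $y$, moment bounds, and $R^h=R_0h^{-\alpha}\goesto\infty$), and that replacing $\Delta W_\tip/h$ by $H_\tip=T^{r^h}(\Delta W_\tip)/h$ costs only a super-polynomially small amount since $(r^h)^2/h=2(\ln(1/h))^2$ kills the Gaussian tail. Summing the $N=T/h$ local residuals and invoking discrete stability gives $\max_i\bE[\abs{Y_i^{\circ}-Y_i}^2]\le Ch^2$, whence $\abs{Y_0-Y_0^{\pi}}\le Ch$. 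For $\abs{Y_0^{\pi}-Y_0^{\mathrm{tr}}}$, note that $\wb{\Delta W}_\tip$ matches the moments of $\cN(0,h)$ up to order $q=5$ and, for $h$ small, $\abs{\wb{\Delta W}_\tip}\le\sqrt{3h}<r^h$, so in the tree scheme the weight truncation is the identity; comparing the one-step maps $\Phi_i^0$ applied to the near-deterministic arguments $y(\tip,\cdot),z(\tip,\cdot)$ along the Euler step and Taylor-expanding in the increment, the high-order moment matching makes the per-step discrepancy $O(h^2)$ (the quantization-scheme estimate of \cites{Chassagneux2014,ChassagneuxRichou2016}; the generous $q=5$ is what carries it through under the available regularity), so summation plus stability gives $\abs{Y_0^{\pi}-Y_0^{\mathrm{tr}}}\le Ch$.

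For the spatial-grid error $\abs{Y_0^{\mathrm{tr}}-\wb Y_0}$, control the forward error first: from $\wb X_\ip=\Pi\big(\wb X_i+b(\ti,\wb X_i)h+\sigma(\ti,\wb X_i)\wb{\Delta W}_\tip\big)$ versus $X_\ip^{\mathrm{tr}}=X_i^{\mathrm{tr}}+b(\ti,X_i^{\mathrm{tr}})h+\sigma(\ti,X_i^{\mathrm{tr}})\wb{\Delta W}_\tip$, each projection contributes at most $\eta$ (as long as the pre-projection iterate lies in the convex hull of $\Gamma$, guaranteed by choosing $M$ large enough given the moment bounds), the cross terms with the centred increment vanish in conditional expectation, and the Lipschitz continuity of $b,\sigma$ gives, by Young's inequality and discrete Gronwall, $\bE[\abs{\wb X_i-X_i^{\mathrm{tr}}}^p]\le C(\eta h^{-1})^p$ for every $p$ — the factor $h^{-1}$ arising precisely from the $N=T/h$ successive projections each costing $\eta$. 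This propagates backward: writing $\wb Y_i=\wb y_i(\wb X_i)$ and $Y_i^{\mathrm{tr}}=y_i^{\mathrm{tr}}(X_i^{\mathrm{tr}})$ with value functions Lipschitz uniformly in $i,h$ (Lipschitz data propagated through the stable recursion), one splits $Y_i^{\mathrm{tr}}-\wb Y_i=[y_i^{\mathrm{tr}}(X_i^{\mathrm{tr}})-y_i^{\mathrm{tr}}(\wb X_i)]+[y_i^{\mathrm{tr}}(\wb X_i)-\wb y_i(\wb X_i)]$: the first bracket is $\lesssim\abs{X_i^{\mathrm{tr}}-\wb X_i}$ up to a polynomially growing factor, hence $O(\eta h^{-1})$ in $L^2$ after Cauchy--Schwarz and the moment bounds; the second bracket obeys a recursion in which each step inserts one extra grid jump $\le\eta$ (through $\Pi$ inside the argument of $\wb y_{i+1}$) and is otherwise contracting up to a $(1+Ch)$ factor, so it accumulates to $O(N\eta)=O(\eta h^{-1})$. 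Hence $\abs{Y_0^{\mathrm{tr}}-\wb Y_0}\le C\eta h^{-1}$, and collecting the three estimates gives $\abs{Y_0-\wb Y_0}\le Ch+C\eta h^{-1}$.

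The main obstacle is the first step (and the analogous bookkeeping in the trinomial step): upgrading the strong rate $\sqrt h$ of Theorem~\ref{proposition---MainTheorem} to the weak rate $h$ requires controlling the It\^o--Taylor remainders precisely in terms of the $C^3$ PDE-regularity and polynomial growth, verifying that both truncations ($T^h$ on $Y$ and $T^{r^h}$ on the increments) are harmless, and — crucially — using the discrete stability of the Full-Projection scheme so that the $N=T/h$ local errors of size $h^2$ sum to $O(h)$ instead of amplifying. By contrast the grid estimate is comparatively routine once the forward bound $\bE[\abs{\wb X_i-X_i^{\mathrm{tr}}}^2]\le C(\eta/h)^2$ and the uniform Lipschitz bounds on the value functions are in hand.
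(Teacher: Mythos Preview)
Your approach is different from the paper's and, while broadly in the right spirit, has two genuine gaps.

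\medskip
\textbf{How the paper proceeds.} The paper does not introduce any intermediate scheme. It compares, at every grid point $x=\wb X_i$, the exact PDE value $y_i(x)$ with the full-scheme value $\wb y_i(x)$, writing
\[
y_i(x)-\wb y_i(x)=\underbrace{\big[\Psi^Y_i(y_\ip)-\Phi^Y_i(y_\ip)\big](x)}_{\tau^Y_i(x)}\;+\;\underbrace{\big[\Phi^Y_i(y_\ip)-\wb\Phi^Y_i(y_\ip)\big](x)}_{\epsilon^Y_i(x)}\;+\;\underbrace{\big[\wb\Phi^Y_i(y_\ip)-\wb\Phi^Y_i(\wb y_\ip)\big](x)}_{\rho^Y_i(x)},
\]
where $\Psi_i$ is the exact BSDE flow, $\Phi_i$ the Full-Projection step with Gaussian increments and Euler forward, and $\wb\Phi_i$ the step with quantized increments plus grid projection $\Pi$. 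The propagation term $\rho^Y_i$ is controlled by the $L^2$ stability of $\wb\Phi_i$ (Proposition \ref{proposition---stability.of.the.scheme} carries over since $\wb H_\ip$ satisfies (a),(b)). Two local lemmas then give $\abs{\tau^Y_i(x)}\le P(x)h^2$ and $\abs{\epsilon^Y_i(x)}\le P(x)h^2+C\eta$, using It\^o--Taylor expansions of the \emph{smooth} function $y$ and moment matching. Iterating yields $\abs{Y_0-\wb Y_0}^2\le C\sum_i h^{-1}(h^4+\eta^2)=C(h^2+\eta^2 h^{-2})$.

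\medskip
\textbf{Gap 1: your Step 2.} You want $\abs{Y_0^{\pi}-Y_0^{\mathrm{tr}}}\le Ch$ by ``summation plus stability''. But the stability estimate (Proposition \ref{proposition---stability.of.the.scheme}) compares outputs of the \emph{same} one-step map with different inputs; it does not compare two different schemes driven by two different forward processes $(X_i)$ and $(X_i^{\mathrm{tr}})$. The Gaussian and trinomial forward Euler paths are only close in a weak sense, not in $L^2$, so you cannot feed their difference into the backward stability. The easy fix is to drop Step 1 entirely and prove directly $\abs{Y_0-Y_0^{\mathrm{tr}}}\le Ch$ by the same argument as your Step 1 (with moment matching replacing the Gaussian expansion). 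This is precisely what the paper does, with $\tau^Y_i$ and $\epsilon^Y_i$ handled together along the \emph{one} forward path $\wb X$.

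\medskip
\textbf{Gap 2: your Step 3.} Both brackets in your split $y_i^{\mathrm{tr}}(X_i^{\mathrm{tr}})-y_i^{\mathrm{tr}}(\wb X_i)$ and $y_i^{\mathrm{tr}}(\wb X_i)-\wb y_i(\wb X_i)$ rely on uniform-in-$(i,h)$ Lipschitz (or locally Lipschitz with polynomially growing factor) regularity of the discrete value functions $y_i^{\mathrm{tr}}$. With a polynomial-growth, merely monotone driver this is not established anywhere in the paper and is nontrivial. The paper avoids this completely: by always inserting the \emph{smooth} PDE function $y_\ip$ into the operators $\Phi_i$ and $\wb\Phi_i$, only the assumed $C^3$ regularity of $y$ is ever used, and the grid-projection cost appears cleanly as the $C\eta$ term in $\abs{\epsilon^Y_i(x)}$ via $\abs{\wt X_\ip-\Pi(\wt X_\ip)}\le\eta$.

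\medskip
In short, your three-stage telescoping can be made to work, but only after collapsing Steps 1--2 into a single comparison with $Y_0$ along the quantized forward and after either proving the Lipschitz claim on $y_i^{\mathrm{tr}}$ or, better, reorganizing Step 3 so that only $y_\ip$ (not $y_\ip^{\mathrm{tr}}$) is evaluated at shifted spatial arguments --- at which point you have essentially recovered the paper's argument.
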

The proof of the theorem is carried out in Section \ref{sec:proofofmainnumericstheorem}.

%%%%%%%%%%%%%%%%%%%%%%%%%%%%%%%%%%%%%%%%%%%%%%%%%%%%%%%%%%%%%%%%%%%%%%%%%%%%%%%%%%%%%%%%%%%%%%%%%%%%%%%%%%%%%%%%%%%%%%%%%%%%%%%%%%%%%%%%%%%%%%%%%%%
%%%%%%%%%%%%%%%%%%%%%%%%%%%%%%%%%%%%%%%%%%%%%%%%%%%%%%%%%%%%%%%%%%%%%%%%%%%%%%%%%%%%%%%%%%%%%%%%%%%%%%%%%%%%%%%%%%%%%%%%%%%%%%%%%%%%%%%%%%%%%%%%%%%
%%%%%%%%%%%%%%%%%%%%%%%%%%%%%%%%%%%%%%%%%%%%%%%%%%%%%%%%%%%%%%%%%%%%%%%%%%%%%%%%%%%%%%%%%%%%%%%%%%%%%%%%%%%%%%%%%%%%%%%%%%%%%%%%%%%%%%%%%%%%%%%%%%%
%\newpage
\section{Analysis of the time-discretization scheme}		
\label{section---analysis.of.the.time.discretization.scheme} 

%//////////////////////////////////////////////////////////////////////////////////////////////////////////////////////////////////////////////////////////////////////////////////////////
%//////////////////////////////////////////////////////////////////////////////////////////////////////////////////////////////////////////////////////////////////////////////////////////
%subsection{Convergence result}

To prove the time-discretization's convergence rate we first need to prove a number of integrability and stability properties for the numerical scheme. 
Another and crucial ingredient is a stability property for the associated operators $\Phi_i$ (cf Section \ref{sec:fullprojectionscheme}). Roughly speaking, we want an estimate of the form $\norm{\Phi_i(\cY)-\Phi_i(\cY')} \le e^{c h} \norm{\cY-\cY'}$, which controls the error of the outputs solely by the contribution of the input errors. This then allows to bound the global error by a sum of local errors. Moreover, due to the features of our Full-projection scheme, by truncating the inputs $\cY$, we are able to show our scheme satisfies this sought stability estimate in a ``pure form'', i.e. without additional imperfection terms which then have to be controlled as in \cite{LionnetReisSzpruch2016} (cf Remark \ref{remark---true.stability}). 
The third ingredient, the local errors, are studied in Section \ref{sec:LocalTime-DiscretizationError}.

%//////////////////////////////////////////////////////////////////////////////////////////////////////////////////////////////////////////////////////////////////////////////////////////
%//////////////////////////////////////////////////////////////////////////////////////////////////////////////////////////////////////////////////////////////////////////////////////////
\subsection{Non-explosion}
\label{sec:non-explosion}

This first result provides an estimate for growth of the scheme outputs as a function of the inputs after one-step of the scheme.

	\begin{proposition}		\label{proposition---size.estimate.one.step}
		Let $h \le (16 (d+1) L_z^2)^{-1}$ and $\alpha \le \frac{1}{2(m-1)}$. Then, there exists a constant $c \in \R$ and $K \ge 0$ (not depending on $h$) such that, for all $i \in \{0,\ldots,N-1\}$, 
			\begin{align}		
			\label{equation---size.estimate.one.step.almost.sure}
				\abs{Y_i}^2 + \frac{1}{8} \abs{Z_i}^2 h
					\le e^{c h} \bE_i\big[ \abs{T^h(Y_\ip)}^2 \big] + K^2 h .
			\end{align}
		This estimate implies for both schemes \eqref{equation---reference--FPS.scheme} and \eqref{equation---reference--FPS.scheme.alternative}
			\begin{align*}
				&\abs{Y_i}^2 + \frac{1}{8} \abs{Z_i}^2 h
					\le e^{c h} \bE_i\big[ \abs{Y_\ip}^2 \big]	+ K^2 h			\hspace{0.5cm} \text{and} \hspace{0.5cm} 
				\abs{\wt Y_i}^2 + \frac{1}{8} \abs{\wt Z_i}^2 h
					\le e^{c h} \bE_i\big[ \abs{\wt Y_\ip}^2 \big] + K^2 h .
			\end{align*}
	\end{proposition}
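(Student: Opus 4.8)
The plan is to work directly from the one-step equations \eqref{equation---reference--FPS.scheme}. Write $\cY = T^h(Y_\ip)$ for brevity, so that $Y_i = \bE_i[\cY + f(\cY,Z_i)h]$ and $Z_i = \bE_i[\cY \Hip^*]$. First I would handle the $Z$-term: by property (a) of the weight, $\Hip$ is an $\F_\ti$-conditional martingale increment, so $\bE_i[\cY\Hip^*] = \bE_i[(\cY - \bE_i\cY)\Hip^*]$, and then by Cauchy--Schwarz together with $\bE_i[\Hip\Hip^*] = \Lambda h^{-1}I_d$ and $\Lambda \le 1$ (property (b)) one gets $\abs{Z_i}^2 \le d h^{-1}(\bE_i[\abs{\cY}^2] - \abs{\bE_i\cY}^2)$, i.e. $\abs{Z_i}^2 h \le d(\bE_i[\abs{\cY}^2] - \abs{\bE_i\cY}^2)$. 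Keep this aside; it will be needed to absorb the $Z_i$ appearing through the $L_z$-Lipschitz dependence of $f$.

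Next, expand $\abs{Y_i}^2 = \abs{\bE_i\cY}^2 + 2\scalar{\bE_i\cY}{\bE_i[f(\cY,Z_i)]}h + \abs{\bE_i[f(\cY,Z_i)]}^2 h^2$. For the cross term, use $\scalar{\bE_i\cY}{\bE_i[f(\cY,Z_i)h]} = \bE_i[\scalar{\bE_i\cY}{f(\cY,Z_i)}]h$ and bound the inner product: write $\scalar{\bE_i\cY}{f(\cY,Z_i)} = \scalar{\cY}{f(\cY,Z_i)} + \scalar{\bE_i\cY - \cY}{f(\cY,Z_i)}$, apply the monotone-growth estimate \eqref{equation---assumptions.growth} (with a free $\nu$) to the first piece to get $\le M + \wh M_y\abs{\cY}^2 + M_z\abs{Z_i}^2$, and Young's inequality to the second piece against the linear-growth bound $\abs{f(\cY,Z_i)} \le K + K_y\abs{\cY}^m + K_z\abs{Z_i}$. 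The key structural point that makes everything work is that $\abs{\cY} = \abs{T^h(Y_\ip)} \le R^h = R_0 h^{-\alpha}$, so $\abs{\cY}^m = \abs{\cY}^{m-1}\abs{\cY} \le R_0^{m-1}h^{-\alpha(m-1)}\abs{\cY}$ and, under the hypothesis $\alpha \le \frac{1}{2(m-1)}$, the factor $h^{-\alpha(m-1)}$ is dominated by $h^{-1/2}$; hence every superlinear-in-$\cY$ contribution carries a genuine power of $h$ and, after Young, is absorbed into $K^2 h$ plus a term $Ch\cdot\abs{\cY}^2$. The $h^2$-term $\abs{\bE_i[f(\cY,Z_i)]}^2h^2$ is treated the same way: $\abs{f(\cY,Z_i)}^2 h^2 \lesssim h^2(K^2 + K_y^2\abs{\cY}^{2m} + K_z^2\abs{Z_i}^2)$, with $\abs{\cY}^{2m} \le R_0^{2(m-1)}h^{-2\alpha(m-1)}\abs{\cY}^2 \le h^{-1}\abs{\cY}^2$ by the constraint on $\alpha$, so this again produces at most $Ch\abs{\cY}^2 + Ch^2\abs{Z_i}^2 + K^2 h$.

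Collecting terms gives $\abs{Y_i}^2 \le \abs{\bE_i\cY}^2 + Ch\bE_i[\abs{\cY}^2] + (M_z h + Ch^2)\bE_i[\abs{Z_i}^2\mid\text{up to }\F_\ti] + K^2 h$; but $Z_i$ is $\F_\ti$-measurable, and using the $Z$-bound $\abs{Z_i}^2 h \le d(\bE_i[\abs{\cY}^2] - \abs{\bE_i\cY}^2)$ one replaces $C'\abs{Z_i}^2 h$ (with $C' = M_z + Ch \le$ something like $2d L_z^2/(2\nu) + \ldots$, controlled by the hypothesis $h \le (16(d+1)L_z^2)^{-1}$ after choosing $\nu$ appropriately) by $C'd(\bE_i[\abs{\cY}^2] - \abs{\bE_i\cY}^2)$. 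Adding $\frac18\abs{Z_i}^2 h$ to the left and using the same $Z$-bound once more, the $-\abs{\bE_i\cY}^2$ contributions let the $\abs{\bE_i\cY}^2$ on the right be upgraded to $\bE_i[\abs{\cY}^2]$, and the net coefficient of $\bE_i[\abs{\cY}^2]$ is $1 + Ch \le e^{Ch}$, giving \eqref{equation---size.estimate.one.step.almost.sure} with $c = C$. The main obstacle is bookkeeping the constants so that the smallness hypotheses $h \le (16(d+1)L_z^2)^{-1}$ and $\alpha \le \frac{1}{2(m-1)}$ are exactly what is needed — in particular, choosing $\nu$ in \eqref{equation---assumptions.growth} and splitting the $L_z$-terms between the two uses of the $Z$-bound so that the coefficient of $\abs{Z_i}^2 h$ on the right stays below $\frac18$ after the $\frac18\abs{Z_i}^2h$ is moved over; everything else is routine Young/Cauchy--Schwarz. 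Finally, the two displayed consequences follow from \eqref{equation---size.estimate.one.step.almost.sure} by the elementary $\abs{T^h(Y_\ip)} \le \abs{Y_\ip}$ for the $(Y_i,Z_i)$ scheme, and by the identities listed after \eqref{equation---reference--FPS.scheme.alternative} (namely $\wt Y_i = T^h(Y_i)$, $\wt Z_i = Z_i$, so $\abs{\wt Y_i}^2 \le \abs{Y_i}^2$ and $\abs{T^h(Y_\ip)} = \abs{\wt Y_\ip}$) for the $(\wt Y_i,\wt Z_i)$ scheme.
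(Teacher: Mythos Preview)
Your approach is genuinely different from the paper's, and as written it has a gap that prevents it from closing in general dimension.

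The paper does \emph{not} expand $\abs{Y_i}^2 = \abs{\bE_i\cY + \bE_i[f(\cY,Z_i)]h}^2$ directly. Instead it writes
\[
Y_i + \Delta M_\ip = T^h(Y_\ip) + f\big(T^h(Y_\ip),Z_i\big)h,
\qquad \Delta M_\ip := \cY + f(\cY,Z_i)h - \bE_i[\cY + f(\cY,Z_i)h],
\]
and squares to obtain the \emph{identity}
\[
\abs{Y_i}^2 + \bE_i\big[\abs{\Delta M_\ip}^2\big]
= \bE_i\big[\abs{\cY}^2 + 2\scalar{\cY}{f(\cY,Z_i)}h + \abs{f(\cY,Z_i)}^2 h^2\big].
\]
Two things are gained. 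First, the cross term is $2h\,\bE_i[\scalar{\cY}{f(\cY,Z_i)}]$, so the monotone-growth bound \eqref{equation---assumptions.growth} applies directly, with no residual term of the form $\scalar{\bE_i\cY - \cY}{f(\cY,Z_i)}$. Second, the $\abs{Z_i}^2 h$ on the left comes from a \emph{lower} bound on $\bE_i[\abs{\Delta M_\ip}^2]$, namely $\bE_i[\abs{\Delta M_\ip}^2] \ge \tfrac12\Lambda^{-1}\abs{Z_i}^2 h - d h^2 \bE_i[\abs{f(\cY,Z_i)}^2]$, where the dimension $d$ appears only multiplied by $h^2$ and is therefore harmless.

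In your plan, both points become obstacles. For the cross term, your decomposition introduces $2h\,\bE_i[\scalar{\bE_i\cY-\cY}{f(\cY,Z_i)}]$; using the growth bound and the truncation $\abs{\cY}^m \le R_0^{m-1}h^{-1/2}\abs{\cY}$, an asymmetric Young inequality yields $Ch\,\bE_i[\abs{\cY}^2]$ \emph{plus} a term $\gamma\,(\bE_i[\abs{\cY}^2]-\abs{\bE_i\cY}^2)$ with a constant $\gamma>0$ that does not carry a factor of $h$ --- this variance term is missing from your ``Collecting terms'' line. For the $Z$-part, your Cauchy--Schwarz bound $\abs{Z_i}^2 h \le d\,(\bE_i[\abs{\cY}^2]-\abs{\bE_i\cY}^2)$ is an \emph{upper} bound; using it to move $\tfrac18\abs{Z_i}^2 h$ and the $M_z\abs{Z_i}^2 h$ terms to the right produces further variance contributions with coefficients proportional to $d$. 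To then ``upgrade $\abs{\bE_i\cY}^2$ to $\bE_i[\abs{\cY}^2]$'' you need the total variance coefficient to be strictly below $1$, but that coefficient is at least $\tfrac{d}{8}+\gamma$, which fails for $d\ge 8$ regardless of how small $h$ is. The hypothesis $h \le (16(d+1)L_z^2)^{-1}$ does not help here, since the offending terms are not multiplied by $h$.

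So the martingale-increment identity is not merely a stylistic choice: it is what makes the monotonicity usable without a residual, and what delivers $\abs{Z_i}^2 h$ on the left without a dimension restriction. Your argument for the two displayed consequences at the end is fine.
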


\begin{proof}
		Similarly to \cite{LionnetReisSzpruch2016}, we rewrite equation \eqref{equation---reference--FPS.scheme} as
			\begin{align*}
				Y_i + \Delta M_\ip = T^h(Y_\ip) + f(T^h(Y_\ip),Z_i) h ,
			\end{align*}
		where $\Delta M_\ip = T^h(Y_\ip) + f(T^h(Y_\ip),Z_i) h - \bE_i[T^h(Y_\ip) + f(T^h(Y_\ip),Z_i) h]$.
		We square both sides and exploit martingale property of $\Delta M_\ip$ to get
			\begin{align*}
				\abs{Y_i} + \bE_i[\abs{\Delta M_\ip}^2] 
				= \bE_i\Big[ \abs{T^h(Y_\ip)}^2 
				+ 2 \scalar{T^h(Y_\ip)}{f(T^h(Y_\ip),Z_i) h} 
				+ \abs{f(T^h(Y_\ip),Z_i)}^2 h^2 \Big] .
			\end{align*}
		Thus \eqref{equation---assumptions.growth} implies, for $\alpha_z > 0$ to be fixed later,		
			\begin{align*}
				\abs{Y_i} & + \bE_i[ \abs{\Delta M_\ip}^2] \\
						&\le  \bE_i\bigg[ \Big\{ 1 + 2\big( M_y + \alpha_z\big) h \Big\} \abs{Y_\ip}^2 \bigg]
										+ \frac{L_z^2}{\alpha_z} \abs{Z_i}^2 h + \bE_i \big[ \abs{f(T^h(Y_\ip),Z_i)}^2 \big] h^2 +\frac{\abs{f(0,0)}^2}{\alpha_z} h  .
			\end{align*}
		In exactly the same way as in \cites{LionnetReisSzpruch2016,Lionnet2016} one can show that		
			\begin{align*}
				\bE_i\big[\abs{\Delta M_\ip}^2\big] 
					\ge \half \abs{Z_i}^2 \Lambda^{-1} h -  d h^2 \bE_i\Big[ \abs{ f(T^h(Y_\ip),Z_i) }^2  \Big].
			\end{align*}
		Next, using \hfregY and \hfreg, 
			\begin{align*}
				\bE_i \big[ \abs{f(T^h(Y_\ip),Z_i)}^2 \big] h^2 
				\le 4 L_y^2 \bE_i \big[ ( 1 + \abs{T^h(Y_\ip)}^{2(m-1)} ) \abs{T^h(Y_\ip)}^{2} \big] h^2 + 2 L_z^2 \abs{Z_i}^2 h^2  + \abs{f(0,0)}^2 h^2.
			\end{align*}
		Therefore, using  $\Lambda^{-1} \ge 1$ and properties of the projection, we have the estimate
			\begin{align*}
				\abs{Y_i}^2 + \bigg( \half - \frac{L_z^2}{\alpha_z} \bigg) \abs{Z_i}^2 h  
					&\le  \bE_i\bigg[ \Big\{ 1 + 2 \big( M_y + \alpha_z\big) h \Big\} \abs{Y_\ip}^2 \bigg] + 2(d+1) L_z^2 \abs{Z_i}^2 h^2
					\\
						&\hspace{3cm} + 4 (d+1) L_y^2 \bE_i \big[ ( 1 + (R^h)^{2(m-1)} ) \abs{Y_\ip}^{2} \big] h^2  + K^2 h,
			\end{align*}
		where $K= (d+1)\abs{f(0,0)}^2 h + \frac{\abs{f(0,0)}^2}{\alpha_z}$.
		Now, we choose $\alpha_z = 4 L_z^2$, 
		so that $\half - \frac{L_z^2}{\alpha_z} = \frac{1}{4}$.
		We take $h$ such that $2(d+1) L_z^2 h \le \frac{1}{8}$.
		With this, we have	
			\begin{align*}
				\abs{Y_i}^2 + \frac{1}{8} \abs{Z_i}^2 h
					\le e^{c(R^h) h} \bE_i\big[ \abs{T^h(Y_\ip)}^2 \big] + K^2 h,
			\end{align*}
		where, with $R^h = R_0 h^{-\alpha}$, we have 
			%\begin{align*}
$				c(R^h) 
				%& 
				%= 2 M_y + 8 L_z^2 + 4 (d+1) L_y^2 (1 + (R^h)^{2(m-1)}) h
				%\\
				%&
				= 2M_y + 8 L_z^2 + 4 (d+1) L_y^2 (1 + R_0^{2(m-1)} h^{-2(m-1)\alpha}) h.$
			%\end{align*}
		Choosing $\alpha \le \frac{1}{2(m-1)}$ ensures $c(R^h)$ remains bounded as $h \goesto 0$, in fact, $c(R^h) \goesto 2M_y + 8 L_z^2$ if $\alpha < \frac{1}{2(m-1)}$.
		So there exists a constant such that $c(R^h) \le c$, which proves the main statement. % of the proposition.
		
		Now, using $\abs{T^h(Y_\ip)} \le \abs{Y_\ip}$ we obtain the second estimate. 
		For the third, we recall that $\wt Y_\ip = T^h(Y_\ip)$, $\wt Z_i = Z_i$, $\wt Y_i = T^h(Y_i)$, and use $\abs{T^h(\wt Y_i)} \le \abs{\wt Y_i}$.
	\end{proof}
Iterating this one-step estimate one easily obtains a path-wise and an $L^p$ estimate showing that the scheme does not blow-up.
	\begin{corollary}
		There exists a constant $c \in \R$ such that, for all $i \in \{0,\ldots,N-1\}$, 
			\begin{align}		\label{equation---size.estimate.all.steps.almost.sure}
				\abs{Y_i}^2 + \frac{1}{8} \bE_i\bigg[ \sum_{j=i}^{N-1} \abs{Z_j}^2 h \bigg]
					\le e^{c (T-\ti)} \bE_i\big[ \abs{T^h(Y_N)}^2 \big] + e^{c(T-\ti)} K^2 (T-\ti) .
			\end{align}
			Moreover, for any $p \geq 1$ there exists a constant $C \ge 0$ (not depending on $h$) such that
			\begin{align}	\label{equation---size.estimate.all.steps.Lp}
				\max_{i=0, \ldots, N} \bE\big[\,\abs{Y_i}^{2p}\big] + \bE\bigg[ \sum_{i=0}^{N-1} ( \abs{Z_i}^2 h )^p \bigg] \le C .
			\end{align}	
	\end{corollary}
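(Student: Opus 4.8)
The plan is to derive \eqref{equation---size.estimate.all.steps.almost.sure} by a backward iteration of the one-step estimate \eqref{equation---size.estimate.one.step.almost.sure} of Proposition~\ref{proposition---size.estimate.one.step}, and then to bootstrap the resulting almost-sure bound to the $L^p$ bound \eqref{equation---size.estimate.all.steps.Lp}. Without loss of generality take $c\ge0$ in \eqref{equation---size.estimate.one.step.almost.sure} (otherwise replace $c$ by $\max(c,0)$; the estimate only weakens). First I would iterate: for $j$ from $N-1$ down to $i$, insert the one-step estimate at index $j+1$ into the one at index $j$, bounding $\abs{T^h(Y_{j+1})}\le\abs{Y_{j+1}}$ when $j+1<N$ and keeping $\abs{T^h(Y_N)}$ as is at the last step. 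Multiplying the $j$-th estimate by $e^{c(t_j-t_i)}$, applying $\bE_i$ and using the tower property, the $\abs{Y}^2$-terms telescope and one is left with
\begin{align*}
	\abs{Y_i}^2 + \frac18\,\bE_i\Big[\sum_{j=i}^{N-1} e^{c(t_j-t_i)}\,\abs{Z_j}^2 h\Big]
	\le e^{c(T-t_i)}\,\bE_i\big[\abs{T^h(Y_N)}^2\big] + K^2 h\sum_{j=i}^{N-1} e^{c(t_j-t_i)}.
\end{align*}
Since $c\ge0$ one has $e^{c(t_j-t_i)}\ge1$ in the sum on the left and $\sum_{j=i}^{N-1}e^{c(t_j-t_i)}h\le e^{c(T-t_i)}(T-t_i)$ on the right, which gives \eqref{equation---size.estimate.all.steps.almost.sure}.

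For the $Y$-part of \eqref{equation---size.estimate.all.steps.Lp} this is essentially done: from \eqref{equation---size.estimate.all.steps.almost.sure} one gets $\abs{Y_i}^2\le e^{cT}\big(\bE_i[\abs{T^h(Y_N)}^2]+K^2T\big)$, and raising to the power $p$, using $(a+b)^p\le 2^{p-1}(a^p+b^p)$ and the conditional Jensen inequality $(\bE_i[W])^p\le\bE_i[W^p]$ for $W\ge0$, then taking expectations, yields $\bE[\abs{Y_i}^{2p}]\le C_p\big(\bE[\abs{T^h(Y_N)}^{2p}]+1\big)$ uniformly in $i$. Since $\abs{T^h(Y_N)}\le\abs{Y_N}=\abs{g(X_N)}$ with $g$ Lipschitz and the explicit Euler scheme $(X_i)$ has moments of every order bounded uniformly in $h$ (standard, from the affine growth of $b$ and $\sigma$), one has $\bE[\abs{T^h(Y_N)}^{2p}]\le C_p$, hence $\max_i\bE[\abs{Y_i}^{2p}]\le C$.

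The $Z$-part for $p>1$ is where the only real care is needed, and I expect it to be the main obstacle: \eqref{equation---size.estimate.all.steps.almost.sure} only controls the \emph{first} moment of $\sum_j\abs{Z_j}^2h$, and no reverse Jensen is available, so one must propagate the $p$-th power through the recursion — which forbids a lossy bound of the form $(a+b)^p\le 2^{p-1}(a^p+b^p)$, since the resulting constant $2^{pN}$ would explode. Instead I would use that, for $a,b\ge0$ and $\delta\in(0,1)$, convexity of $t\mapsto t^p$ gives $(a+b)^p\le(1-\delta)^{1-p}a^p+\delta^{1-p}b^p$; taking $\delta=h$ (so that $(1-h)^{1-p}\le e^{2(p-1)h}$ for $h$ small, the finitely many coarser grids being absorbed into the constant) together with conditional Jensen, \eqref{equation---size.estimate.one.step.almost.sure} becomes
\begin{align*}
	\Big(\abs{Y_i}^2+\frac18\abs{Z_i}^2 h\Big)^p \le e^{c_p h}\,\bE_i\big[\abs{T^h(Y_{i+1})}^{2p}\big]+K^{2p}h, \qquad c_p:=cp+2(p-1)\ge0,
\end{align*}
and the elementary inequality $(A+B)^p\ge A^p+B^p$ (for $A,B\ge0$, $p\ge1$) then yields $\abs{Y_i}^{2p}+8^{-p}(\abs{Z_i}^2h)^p\le e^{c_p h}\bE_i[\abs{T^h(Y_{i+1})}^{2p}]+K^{2p}h$, which has exactly the structure of \eqref{equation---size.estimate.one.step.almost.sure}. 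Iterating it as in the first paragraph (with $c_p$ in place of $c$) and taking $i=0$ gives $\bE\big[\sum_{j=0}^{N-1}(\abs{Z_j}^2h)^p\big]\le 8^p e^{c_p T}\big(\bE[\abs{T^h(Y_N)}^{2p}]+K^{2p}T\big)\le C$, using the Euler-scheme moment bound once more. Together with the previous paragraph, this establishes \eqref{equation---size.estimate.all.steps.Lp}.
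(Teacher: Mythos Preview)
Your proposal is correct and follows essentially the same route as the paper, which merely sketches the argument by citing Lemma~A.3 and Proposition~3.5 of \cite{LionnetReisSzpruch2016}; you have spelled out the iteration for \eqref{equation---size.estimate.all.steps.almost.sure} and the $L^p$ bootstrapping for \eqref{equation---size.estimate.all.steps.Lp} that those references contain. In particular, your use of the convexity splitting $(a+b)^p\le(1-\delta)^{1-p}a^p+\delta^{1-p}b^p$ with $\delta=h$, followed by $(A+B)^p\ge A^p+B^p$ to recover a one-step estimate of the same shape in the $2p$-th moment, is precisely the standard device used in that reference to avoid the $2^{pN}$ blow-up.
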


	\begin{proof}
		Estimate \eqref{equation---size.estimate.all.steps.almost.sure} follows by iterating \eqref{equation---size.estimate.one.step.almost.sure} 
		(with Lemma A.3 in \cite{LionnetReisSzpruch2016}) using that $\abs{T^h(y)} \le \abs{y}$. 
		The result in \eqref{equation---size.estimate.all.steps.Lp} is proved in the same fashion as Proposition 3.5 in \cite{LionnetReisSzpruch2016}.
	\end{proof}

%//////////////////////////////////////////////////////////////////////////////////////////////////////////////////////////////////////////////////////////////////////////////////////////
%//////////////////////////////////////////////////////////////////////////////////////////////////////////////////////////////////////////////////////////////////////////////////////////
%\newpage
\subsection{Stability and numerical stability of the scheme} 
\label{sec:stability}
In this subsection, we consider the random variables $Y^1_\ip$ and $Y^2_\ip$ as inputs at time $\tip$, and the respective outputs $({Y}^1_i,{Z}^1_i)$ and $(Y^2_i,Z^2_i)$ at time $\ti$ from one step of the scheme \eqref{equation---reference--FPS.scheme}. The next result states that the error of the outputs is controlled solely by the contribution of the input errors (as opposed to the setting in \cite{LionnetReisSzpruch2016} --- cf Remark \ref{remark---true.stability}). 

We denote generically by $\delta x$ the quantity ${x}^1-x^2$.

	\begin{proposition}		
	\label{proposition---stability.of.the.scheme} 
		There exists a constant $c\geq 0$ (independent on $\pi$, $Y^1_\ip$ and $Y^2_\ip$) such that,
		%whenever $h \le h_0 = \frac{1}{8} \ \frac{1}{2(d+1) L_z^2}$,
			\begin{align}		
			\label{equation---stability.estimate.one.step.almost.sure}
			\textrm{for any }
			h \le h_0 = \frac{1}{8} \ \frac{1}{2(d+1) L_z^2}
			\quad \textrm{we have} \quad
				\abs{\delta Y_\ip}^2 + \frac{1}{8} \abs{\delta Z_i}^2 \hip 
						\le e^{c h } \bE_i\big[ \abs{\delta Y_\ip}^2 \big] .
			\end{align}
	\end{proposition}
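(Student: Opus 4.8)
The plan is to transpose the one-step bound of Proposition \ref{proposition---size.estimate.one.step} to the \emph{difference} of the two runs of the scheme. Doing so is advantageous twice over: all the additive $f(0,0)$-type constants cancel, so the estimate comes out in ``pure'' form with no residual imperfection term (unlike \cite{LionnetReisSzpruch2016}, cf.\ Remark \ref{remark---true.stability}), and the monotonicity constant $M_y$ enters keeping its sign, which is what lies behind the numerical-stability behaviour seen in Section \ref{section---numerical.scheme.and.results}. Throughout put $\delta(T^h(Y_\ip)):=T^h(Y^1_\ip)-T^h(Y^2_\ip)$, $\delta f:=f(T^h(Y^1_\ip),Z^1_i)-f(T^h(Y^2_\ip),Z^2_i)$, and split $\delta f=\delta_yf+\delta_zf$ with $\delta_yf:=f(T^h(Y^1_\ip),Z^1_i)-f(T^h(Y^2_\ip),Z^1_i)$ and $\delta_zf:=f(T^h(Y^2_\ip),Z^1_i)-f(T^h(Y^2_\ip),Z^2_i)$. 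First I would subtract the two instances of \eqref{equation---reference--FPS.scheme}, obtaining $\delta Y_i=\bE_i[\delta(T^h(Y_\ip))+\delta f\,\hip]$ and $\delta Z_i=\bE_i[\delta(T^h(Y_\ip))\Hip^*]$, and introduce the $\F_\ti$-martingale increment $\Delta M_\ip:=\delta(T^h(Y_\ip))+\delta f\,\hip-\bE_i[\delta(T^h(Y_\ip))+\delta f\,\hip]$, so that $\delta Y_i+\Delta M_\ip=\delta(T^h(Y_\ip))+\delta f\,\hip$.

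Squaring, taking $\bE_i$ and using $\bE_i[\Delta M_\ip]=0$ gives
\[
	\abs{\delta Y_i}^2+\bE_i\big[\abs{\Delta M_\ip}^2\big]
	=\bE_i\big[\abs{\delta(T^h(Y_\ip))}^2\big]+2\,\bE_i\big[\scalar{\delta(T^h(Y_\ip))}{\delta f}\big]\hip+\bE_i\big[\abs{\delta f}^2\big]\hip^2 .
\]
Exactly as in \cites{LionnetReisSzpruch2016,Lionnet2016} (using $\delta Z_i=\bE_i[\delta(T^h(Y_\ip))\Hip^*]$ and property b.\ of $\Hip$) one has $\bE_i[\abs{\Delta M_\ip}^2]\ge\tfrac12\abs{\delta Z_i}^2\Lambda^{-1}\hip-d\,\hip^2\bE_i[\abs{\delta f}^2]$; with $\Lambda^{-1}\ge1$ this transfers a term $\tfrac12\abs{\delta Z_i}^2\hip$ to the left at the cost of an extra $(1+d)\,\hip^2\bE_i[\abs{\delta f}^2]$ on the right. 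The remaining right-hand side is then bounded termwise: \hfmon applied to $\delta_yf$ yields $\scalar{\delta(T^h(Y_\ip))}{\delta_yf}\le M_y\abs{\delta(T^h(Y_\ip))}^2$ (the sign of $M_y$ being preserved), \hfreg with Young's inequality of parameter $\alpha_z$ yields $\scalar{\delta(T^h(Y_\ip))}{\delta_zf}\le\alpha_z\abs{\delta(T^h(Y_\ip))}^2+\tfrac{L_z^2}{4\alpha_z}\abs{\delta Z_i}^2$, and for the quadratic term $\abs{\delta f}^2\le2\abs{\delta_yf}^2+2\abs{\delta_zf}^2$ with $\abs{\delta_zf}\le L_z\abs{\delta Z_i}$ by \hfreg and $\abs{\delta_yf}\le L_y(1+2(R^h)^{m-1})\abs{\delta(T^h(Y_\ip))}$ by \hfregY together with $\abs{T^h(\cdot)}\le R^h$ --- this last bound is the crucial one: the superlinear local Lipschitz factor collapses to the \emph{deterministic} constant $L_y(1+2(R^h)^{m-1})$.

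Collecting these (with $\hip=h$) one arrives, up to the precise values of the constants, at
\[
	\abs{\delta Y_i}^2+\Big(\tfrac12-\tfrac{L_z^2}{2\alpha_z}-2(d+1)L_z^2h\Big)\abs{\delta Z_i}^2h
	\le\Big(1+\big(2M_y+2\alpha_z+2(d+1)L_y^2(1+2(R^h)^{m-1})^2h\big)h\Big)\bE_i\big[\abs{\delta(T^h(Y_\ip))}^2\big].
\]
To finish, I would fix $\alpha_z$ as in the proof of Proposition \ref{proposition---size.estimate.one.step} (e.g.\ $\alpha_z=4L_z^2$), so that for $h\le h_0=\tfrac18(2(d+1)L_z^2)^{-1}$ the coefficient of $\abs{\delta Z_i}^2h$ on the left is at least $\tfrac18$. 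On the right, under the standing condition $\alpha\le\tfrac1{2(m-1)}$ of Proposition \ref{proposition---size.estimate.one.step} the term $(R^h)^{2(m-1)}h^2=R_0^{2(m-1)}h^{2-2(m-1)\alpha}$ stays bounded as $h\to0$, so the bracket is $\le1+ch$ for a constant $c$ depending only on the data (and $c\ge0$, enlarging if necessary). Finally, since $T^h$ is $1$-Lipschitz, $\abs{\delta(T^h(Y_\ip))}\le\abs{\delta Y_\ip}$ pointwise, hence $\bE_i[\abs{\delta(T^h(Y_\ip))}^2]\le\bE_i[\abs{\delta Y_\ip}^2]$, and using $1+ch\le e^{ch}$ one obtains $\abs{\delta Y_i}^2+\tfrac18\abs{\delta Z_i}^2h\le e^{ch}\bE_i[\abs{\delta Y_\ip}^2]$, the claimed inequality.

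I do not expect a genuine obstacle: the argument is structurally the same as --- and slightly simpler than --- that of Proposition \ref{proposition---size.estimate.one.step}, the only thing requiring care being the bookkeeping of constants that yields exactly the factor $\tfrac18$ and the threshold $h_0$ as stated. The conceptual point worth stressing is the one flagged above, namely that the projection $T^h$ both turns the polynomial local Lipschitz constant of $f$ into a bounded deterministic factor and makes the additive $f(0,0)$-terms cancel, which is precisely what lets the stability estimate hold in the ``pure'' multiplicative form $\norm{\Phi_i(\cY^1)-\Phi_i(\cY^2)}\le e^{ch}\norm{\cY^1-\cY^2}$ with no imperfection term.
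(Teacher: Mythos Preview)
Your proposal is correct and follows essentially the same approach as the paper: mimic the one-step size estimate of Proposition \ref{proposition---size.estimate.one.step} applied to the difference of two runs, using \hfmon, \hfreg with Young's inequality, and \hfregY together with the projection bound $\abs{T^h(\cdot)}\le R^h$ to collapse the polynomial local Lipschitz factor, then conclude via the $1$-Lipschitz property of $T^h$. The paper's own proof is in fact only a two-line sketch deferring to Proposition \ref{proposition---size.estimate.one.step}, so you have actually supplied more detail than the authors; the minor differences in the Young-inequality parametrization and the resulting explicit constants are immaterial.
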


	\begin{proof}
		This proof is fairly similar to that of Proposition \ref{proposition---size.estimate.one.step}. 
		So we skip most of the estimations and readily conclude that
			\begin{align*}
				\abs{\delta Y_\ip}^2 + \frac{1}{8} \abs{\delta Z_i}^2 \hip 
						\le e^{c(R^h) h } \bE_i\big[ \abs{T^h(Y^1_\ip)-T^h(Y^2_\ip)}^2 \big] ,
			\end{align*}
		where, with $R^h = R_0 h^{-\alpha}$, we have
	$		%\begin{align*}
				c(R^h) 
					%&
					%= 2 M_y + 4 L_z^2 + 6 (d+1) L_y^2 (1 + 2 (R^h)^{2(m-1)}) h
					%\\
					%&
					= 2 M_y + 4 L_z^2 + 3 (d+1) L_y^2 (1 + 2 R_0^{2(m-1)} h^{-2(m-1)\alpha}) h$. 
			%\end{align*} 
		So $c(R^h)$ is a bounded function of $h$ as soon as $\alpha \le \frac{1}{2(m-1)}$. 
		(And it converges to $2 M_y + 4 L_z^2$ as $h \goesto 0$ if $\alpha < \frac{1}{2(m-1)}$.)
		Finally, since $T^h$ is $1$-Lipschitz, we have the sought result.
	\end{proof}

	\begin{remark}[True stability] \label{remark---true.stability}
		Proposition \ref{proposition---stability.of.the.scheme} states that our explicit scheme is actually stable in the standard sense.
		In \cite{LionnetReisSzpruch2015}, the predominantly implicit $\theta$-schemes (with $\theta \in [1/2,1]$) were \emph{asymptotically stable}. 
		Indeed, they had an additional term $R_i$ on the RHS of the estimate \eqref{equation---stability.estimate.one.step.almost.sure} 
		and the \emph{stability remainder} defined as $\cR^S = \max_{i} \sum_{j=i}^{N-1} e^{c(j-i)h} \bE[R_j]$ would vanish as $\abs{\pi}=h \goesto 0$.
		The modified explicit schemes of \cite{LionnetReisSzpruch2016} with tamed drivers $f^h$ were \emph{almost stable}. 
		Indeed, they had a \emph{stability imperfection} term $\cI_i$ on the RHS of the estimate \eqref{equation---stability.estimate.one.step.almost.sure}, 
		satisfying $\bE[\cI_i] \le C h^{\mu+1}$, which in particular made those schemes asymptotically stable.
		By contrast, the {Full Projection Scheme} under study satisfies the standard $L^2$-stability estimate, with no additional term.
	\end{remark}

	\begin{remark}
		The only properties of $\Hip$ used to prove Proposition \ref{proposition---size.estimate.one.step} and \ref{proposition---stability.of.the.scheme} are (a) and (b).
	\end{remark}

Generally speaking, a numerical scheme is said to be \emph{numerically stable} if it preserves some qualitative properties of the continuous-time dynamics 
(note that numerical stability is different from the ($L^2$-)stability of the scheme established in Proposition \ref{proposition---stability.of.the.scheme}). 
Here we consider the following notion of numerical stability.
When the driver $f$ satisfies \hfmon with $M_y < 0$ and $L_z^2 < -2M_y$, we have, for any two terminal conditions $\xi^1$ and $\xi^2$, denoting by $(Y^1,Z^1)$ and $(Y^2,Z^2)$ the associated solutions to the continuous-time BSDE \eqref{equation---canonicBSDE}, the distance estimate
	\begin{align*}
		\norm{Y^1_t - Y^2_t}_2 \le e^{c (T-t)} \norm{\xi^1-\xi^2}_2,
	\end{align*}
with $c = M_y + L_z^2/2$ and $\norm{\cdot}_2$ the norm in $L^2(\F_t)$. In particular, since $c<0$ the continuous dynamics is \emph{contracting}.
Here, we can deduce from Proposition \ref{proposition---stability.of.the.scheme} that when $4L_z^2 \le -M_y$, $\alpha < \frac{1}{2(m-1)}$ and 
	\begin{align*}
		h \le \min\left\{ \frac{-M_y}{4} \frac{1}{3(d+1)L_y^2} , \Big( \frac{-M_y}{4} \frac{1}{6(d+1)L_y^2 R_0^{2(m-1)}} \Big)^{\frac{1}{1-2(m-1)\alpha}} \right\},
	\end{align*}
then
	\begin{align*}
		c(R^h) &= 2 M_y + 4 L_z^2 + 3 (d+1) L_y^2 h + 6 (d+1) L_y^2 R_0^{2(m-1)} h^{1-2(m-1)\alpha} 		\\
			&\le 2 M_y + (-M_y) + \frac{-M_y}{4} + \frac{-M_y}{4}	= \frac{M_y}{2} = c' < 0;
	\end{align*}
so the discretized dynamics, with the Full-Projection Scheme, is also contracting.
If the stability estimate \eqref{equation---stability.estimate.one.step.almost.sure} contained additional terms (cf Remark \ref{remark---true.stability}), this numerical stability criterion could not be obtained. Our stability results strongly improve upon those of \cite{LionnetReisSzpruch2016}.

To further our point, we argue via a size estimate for the solution to the continuous-time BSDE. 
Let $f$ satisfy $f(0,0)=0$, \hfmon with $M_y < 0$ and $L_z^2 < -2M_y$. Then for any square-integrable terminal conditions $\xi=g(X_T) \neq 0$, denoting by $(Y,Z)$ the associated solution to \eqref{equation---canonicBSDE}, we have
	\begin{align*}
		\norm{Y_t}_2 \le e^{c (T-t)} \norm{\xi}_2 < \norm{\xi}_2.
	\end{align*}
Here, when $8L_z^2 \le -M_y$, $\alpha < \frac{1}{2(m-1)}$ and 
	\begin{align*}
		h \le \min\left\{ \frac{-M_y}{4} \frac{1}{4(d+1)L_y^2} , \Big( \frac{-M_y}{4} \frac{1}{4(d+1)L_y^2 R_0^{2(m-1)}} \Big)^{\frac{1}{1-2(m-1)\alpha}} \right\},
	\end{align*}
we can deduce from Proposition \ref{proposition---size.estimate.one.step} (see also \eqref{equation---assumptions.growth}), by backward iteration, that for all $i \le N-1$, 
	\begin{align*}
		\norm{Y_i}_2 \le e^{c' (T-\ti)} \norm{\xi^N}_2 < \norm{\xi^N}_2,\qquad \textrm{with }\ c' = \frac{M_y}{2} < 0.
	\end{align*} 
The explicit scheme with modified driver in \cite{LionnetReisSzpruch2016} does not satisfy this strong numerical stability. Indeed, in the one-step size estimate of its Proposition 3.2, the dominant term in the constant in the exponential is some $\overline M_y$, which is only allowed to be $\ge 0$ (see Appendix B.2.2 and B.3.1, $\overline M_y = \max(M_y,0)$). That scheme thus only satisfies the property that, for all $i$, $\norm{Y_i}_2 \le \norm{\xi^N}_2$.

%//////////////////////////////////////////////////////////////////////////////////////////////////////////////////////////////////////////////////////////////////////////////////////////
%//////////////////////////////////////////////////////////////////////////////////////////////////////////////////////////////////////////////////////////////////////////////////////////
%\newpage
\subsection{Local time-discretization errors}  
\label{sec:LocalTime-DiscretizationError}

We look here at the error created by one step of the scheme compared to the BSDE dynamics.
Therefore, given $Y_\tip$ as input, we look at the random variables $(\wh Y_i,\wh Z_i) = \Phi_i(Y_\tip)$, namely 
	\begin{align}		
	\label{equation---definition.of.hatYi.hatZi}
		\wh Y_i = \bE_i\Big[ T^h(Y_\tip) + f\big(T^h(Y_\tip),\wh Z_i\big) h \Big]		
		\quad\textrm{and}\quad
		\wh Z_i = \bE_i\big[  T^h(Y_\tip) \Hip^* \big] .	
	\end{align}
We are interested in how these differ from $(Y_\ti,\wb Z_\ti) = \Psi_i(Y_\tip)$. 
Here, given a random variable $\cY_\tip \in L^p(\F_\tip)$, we denote by $(\cY_t,\cZ_t)_{t\in [\ti,\tip]}$ the solution over $[\ti,\tip]$ to the BSDE with driver $f$ and terminal condition $\cY_\tip$ at time $\tip$, and define $\Psi_i(\cY_\tip) = (\cY_\ti,\wb{\cZ}_\ti) = (\cY_\ti,h^{-1}\bE_i[\int_\ti^\tip \cZ_t dt])$.
The local time-discretization errors are the $\F_\ti$-measurable random variables
	\begin{align*}
		\tau^Y_i = Y_\ti - \wh Y_i 
		\qquad \text{and} \qquad 
		\tau^Z_i = \wb Z_\ti - \wh Z_i,
	\end{align*}
that is to say, $(\tau^Y_i,\tau^Z_i) = \Psi_i(Y_\tip) - \Phi_i(Y_\tip)$.
The lemma below gives an almost sure estimate on these local errors.
In it, we use the notation for $i\in\{0,\cdots,N-1\}$,
	\begin{align*}
		\mathrm{REG}^{Y,4}_i(h) = \bE_i\Big[ \sup_{\ti \le u \le \tip} \abs{Y_u - Y_\tip}^4 \Big]^\half
		\quad \text{and} \quad 
		\mathrm{REG}^{Z,2}_i(h) = \bE_i\bigg[  \int_\ti^\tip \abs{Z_u - \overline{Z}_\ti}^2 \ud u \bigg] .
	\end{align*}
Also, $C$ will denote a constant, whose value may change from instance to instance, but always depending only on the parameters of the problem ($g$, $f$ and the constants involved in the assumptions on it, etc) but not on $\pi$. 
Similarly, $\cP_i(\cY)$, for a random variable $\cY$ will denote the quantity $C(1+\bE_i[\abs{\cY}^p])$ for some constant $C$ and $p$, not depending on $\pi$, which may vary from line to line.

	\begin{lemma}			\label{proposition---error.time.discretization.local.for.RV}
		For all $i \in \{0,\ldots,N-1\}$, 
			\begin{align*}
				\abs{\tau^Z_i}^2 
						\le C \bE_i\bigg[ \int_\ti^\tip \abs{Z_u}^2 \ud u \bigg] 
							+ C \bE_i\bigg[ \int_\ti^\tip \abs{f(Y_u,Z_u)}^2 \ud u \bigg] 
							+ \cP_i(Y_\tip) h %\cP_i^{4,p}(Y_\tip)
			\end{align*}
		and 
			%\begin{align*}
				$\abs{\tau^Y_i}^2 \
						\le \cP_i\big( \sup_{\ti \le u \le \tip} \abs{Y_u})_{}\big) h^2 \, \mathrm{REG}^{Y,4}_i(h)^\half 
							+ C h \mathrm{REG}^{Z,2}_i(h) + C h^2 \abs{\tau^Z_i}^2
							+ \cP_i(Y_\tip) h^3$. %\cM_i^{4,8,p,2p}
			%\end{align*}
	\end{lemma}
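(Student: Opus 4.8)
The plan is to express each local error as a short sum of terms, obtained from the BSDE on $[\ti,\tip]$ with terminal value $Y_\tip$, and then to bound the terms one at a time using \hfregY, \hfreg, the weight properties (a)--(c), and the $L^p$-bounds for the scheme established above (the latter ensure all the $\cP_i(\cdot)$ below are finite when the lemma is applied with $Y_\tip$ a scheme output). Taking $\bE_i$ of $Y_\ti = Y_\tip + \int_\ti^\tip f(Y_u,Z_u)\,\ud u - \int_\ti^\tip Z_u\,\ud W_u$ kills the stochastic integral, so $Y_\ti = \bE_i[Y_\tip + \int_\ti^\tip f(Y_u,Z_u)\,\ud u]$; pairing the same identity with $\Delta W_\tip$ and using It\^o's isometry ($\bE_i[\int_\ti^\tip Z_u\,\ud W_u\,(\Delta W_\tip)^{*}] = \bE_i[\int_\ti^\tip Z_u\,\ud u] = h\,\wb Z_\ti$) together with $\bE_i[\Hip]=0$ gives $h^{-1}\bE_i[Y_\tip(\Delta W_\tip)^{*}] = \wb Z_\ti - h^{-1}\bE_i[\int_\ti^\tip f(Y_u,Z_u)\,\ud u\,(\Delta W_\tip)^{*}]$. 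Substituting $T^h(Y_\tip) = Y_\tip - (Y_\tip - T^h(Y_\tip))$ in \eqref{equation---definition.of.hatYi.hatZi} and writing $h\,f(T^h(Y_\tip),\wh Z_i) = \int_\ti^\tip f(T^h(Y_\tip),\wh Z_i)\,\ud u$, one arrives at
\begin{align*}
	\tau^Z_i &= -\,\bE_i\Big[ \int_\ti^\tip Z_u\,\ud W_u\,\big(\Hip - h^{-1}\Delta W_\tip\big)^{*} \Big] + \bE_i\Big[ \int_\ti^\tip f(Y_u,Z_u)\,\ud u\,\Hip^{*} \Big] + \bE_i\big[ (Y_\tip - T^h(Y_\tip))\,\Hip^{*} \big] , \\
	\tau^Y_i &= \bE_i\big[ Y_\tip - T^h(Y_\tip) \big] + \bE_i\Big[ \int_\ti^\tip \big( f(Y_u,Z_u) - f(T^h(Y_\tip),\wh Z_i) \big)\,\ud u \Big] .
\end{align*}

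The non-truncation terms are routine. For $\tau^Z_i$, Cauchy--Schwarz, It\^o's isometry and property (c) bound the first term by $\bE_i[\int_\ti^\tip|Z_u|^2\ud u]^{1/2}(Ch)^{1/2}$, hence its square by $Ch\,\bE_i[\int_\ti^\tip|Z_u|^2\ud u]$; Cauchy--Schwarz, $\bE_i[|\int_\ti^\tip f\,\ud u|^2]\le h\,\bE_i[\int_\ti^\tip|f|^2\ud u]$ and property (b) ($\bE_i[|\Hip|^2]=\Lambda d h^{-1}\le d h^{-1}$) bound the square of the second term by $d\,\bE_i[\int_\ti^\tip|f(Y_u,Z_u)|^2\ud u]$. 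For $\tau^Y_i$ I would split the integrand as $f(Y_u,Z_u)-f(T^h(Y_\tip),\wh Z_i) = [f(Y_u,Z_u)-f(Y_u,\wh Z_i)] + [f(Y_u,\wh Z_i)-f(Y_\tip,\wh Z_i)] + [f(Y_\tip,\wh Z_i)-f(T^h(Y_\tip),\wh Z_i)]$. The first bracket, via \hfreg and $Z_u-\wh Z_i = (Z_u-\wb Z_\ti)+\tau^Z_i$, contributes (after squaring) $\le Ch\,\mathrm{REG}^{Z,2}_i(h) + Ch^2\,\abs{\tau^Z_i}^2$; the second, via \hfregY, the crude bound $1+\abs{Y_u}^{m-1}+\abs{Y_\tip}^{m-1}\le 1+2\sup_{\ti\le u\le\tip}\abs{Y_u}^{m-1}$, and H\"older's inequality, contributes $\le \cP_i(\sup_{\ti\le u\le\tip}\abs{Y_u})\,h^2\,\mathrm{REG}^{Y,4}_i(h)$, which is dominated by the stated term since $\mathrm{REG}^{Y,4}_i(h)\le 1$ for $h$ small (path regularity). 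The third bracket and the two terms built from $Y_\tip - T^h(Y_\tip)$ are the crux.

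The main obstacle is the truncation error $Y_\tip - T^h(Y_\tip)$. It vanishes on $\{\abs{Y_\tip}\le R^h\}$ and satisfies, for every integer $q\ge 1$, the pointwise bound $\abs{Y_\tip - T^h(Y_\tip)}\,\1_{\{\abs{Y_\tip}>R^h\}} \le \abs{Y_\tip}^{q}(R^h)^{-(q-1)}$ (since $\abs{Y_\tip}/R^h \ge 1$ there); moreover $\bP_i(\abs{Y_\tip}>R^h) \le (R^h)^{-q}\bE_i[\abs{Y_\tip}^{q}]$ by Markov, and $R^h = R_0 h^{-\alpha}$, so $(R^h)^{-1} = R_0^{-1}h^{\alpha}$. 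Bounding $\bE_i[(Y_\tip - T^h(Y_\tip))\Hip^{*}]$ by Cauchy--Schwarz as $\le \bE_i[\abs{Y_\tip - T^h(Y_\tip)}^2\abs{\Hip}^2]^{1/2}\,\bP_i(\abs{Y_\tip}>R^h)^{1/2}$ and using $\bE_i[\abs{\Hip}^4]^{1/2}\le C h^{-1}$ (up to a logarithmic factor for the truncated weight $\Hip = T^{r^h}(\Delta W_\tip)/h$, or exactly $\sqrt{c_d}\,h^{-1}$ for $\Hip = \Delta W_\tip/h$), one gets $\abs{\bE_i[(Y_\tip-T^h(Y_\tip))\Hip^{*}]}^2 \le C\,\bE_i[\abs{Y_\tip}^4]^{1/2}\bE_i[\abs{Y_\tip}^{q}]\,h^{\alpha q - 1}$ (up to logs), which is $\le \cP_i(Y_\tip)\,h$ once $q$ is chosen so that $\alpha q \ge 2$. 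Likewise $\abs{\bE_i[Y_\tip - T^h(Y_\tip)]} \le \bE_i[\abs{Y_\tip}^{q}](R^h)^{-(q-1)} = R_0^{-(q-1)}h^{\alpha(q-1)}\bE_i[\abs{Y_\tip}^{q}]$, and the third driver bracket contributes $h\,L_y\,\bE_i[(1+2\abs{Y_\tip}^{m-1})\abs{Y_\tip - T^h(Y_\tip)}\,\1_{\{\abs{Y_\tip}>R^h\}}] \le C\,h^{1+\alpha(q-1)}\cP_i(Y_\tip)$; choosing $q$ with $\alpha(q-1)\ge 3/2$ makes the square of each $\le \cP_i(Y_\tip)\,h^3$. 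Finally, squaring each decomposition via $\abs{\sum_{k=1}^{K}a_k}^2\le K\sum_k a_k^2$ and absorbing surplus powers of $h$ with $h\le T$ and the convention that $\cP_i$ and $C$ may change from line to line yields the two asserted estimates. The one genuinely delicate point is the interplay of the projection exponent $\alpha$, the moment order $q$ being spent, and the fourth moment of $\Hip$, so that the polynomial blow-up $h^{-\alpha}$ of the projection radius is beaten by enough of the (uniformly bounded) moments of the scheme; the rest is standard BSDE local-error bookkeeping.
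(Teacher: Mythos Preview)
Your approach is essentially the same as the paper's: decompose each local error into a standard discretization part (handled via \hfreg, \hfregY, Cauchy--Schwarz and property (c)) plus a truncation part $Y_\tip-T^h(Y_\tip)$ (handled via Markov's inequality and the polynomial decay of $(R^h)^{-q}=R_0^{-q}h^{\alpha q}$). Two minor points deserve attention.

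First, for the truncation term in $\tau^Z_i$ your Cauchy--Schwarz split $\bE_i[\,\abs{Y_\tip-T^h(Y_\tip)}^2\abs{\Hip}^2\,]^{1/2}\,\bP_i(\abs{Y_\tip}>R^h)^{1/2}$ forces you to control $\bE_i[\abs{\Hip}^4]$, which is not among the listed properties (a)--(c). The paper instead uses the direct split $\abs{\bE_i[(Y_\tip-T^h(Y_\tip))\Hip^*]}^2 \le \bE_i[\abs{Y_\tip-T^h(Y_\tip)}^2]\,\bE_i[\abs{\Hip}^2]$, which needs only property (b), and then applies Cauchy--Schwarz and Markov to the first factor. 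Your route still works for the concrete weights $\Hip=\Delta W_\tip/h$ or $T^{r^h}(\Delta W_\tip)/h$, but it introduces the logarithmic factor you mention, which you must then swallow by taking $q$ slightly larger; the paper's split avoids this altogether.

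Second, the justification ``$\mathrm{REG}^{Y,4}_i(h)\le 1$ for $h$ small'' is not valid at the almost-sure level, since $\mathrm{REG}^{Y,4}_i(h)=\bE_i[\sup\abs{Y_u-Y_\tip}^4]^{1/2}$ is a random variable. Your actual bound $\cP_i(\sup\abs{Y_u})\,h^2\,\mathrm{REG}^{Y,4}_i(h)$ nevertheless implies the stated one with $\mathrm{REG}^{Y,4}_i(h)^{1/2}$: write $\mathrm{REG}^{Y,4}_i(h)=\mathrm{REG}^{Y,4}_i(h)^{1/2}\cdot\mathrm{REG}^{Y,4}_i(h)^{1/2}$ and note $\mathrm{REG}^{Y,4}_i(h)^{1/2}\le 2\,\bE_i[\sup\abs{Y_u}^4]^{1/4}$, which can be absorbed into $\cP_i$.
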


	\begin{proof}
		\emph{Estimate for $\tau^Z_i$}. 
		We first write 
		\begin{align*}
				\tau^Z_i 
					&= \bigg( \frac1h \bE_i\bigg[ \int_\ti^\tip Z_u du \bigg] - \bE_i\Big[ Y_\tip H_\ip^* \Big] \bigg)
										+ \bigg( \bE_i\Big[ Y_\tip H_\ip^* \Big] - \bE_i\Big[ T^h(Y_\tip) H_\ip^* \Big] \bigg)
							= \tau^{Z,d}_i + \tau^{Z,t}_i.
			\end{align*}
		Similarly to \cites{LionnetReisSzpruch2015,LionnetReisSzpruch2016}, the first term can be estimated as 
			\begin{align*}
				\Abs{ \tau^{Z,d}_i }^2 
					&\le C \bE_i\bigg[ \int_\ti^\tip \abs{Z_u}^2 \ud u \bigg] \bE\bigg[ \Abs{\frac{\Delta W_\tip}{h} - H_\ip }^2 \bigg] + C \bE_i\bigg[ \int_\ti^\tip \abs{f(Y_u,Z_u)}^2 \ud u \bigg]	\\
					&\le C \bE_i\bigg[ \int_\ti^\tip \abs{Z_u}^2 \ud u \bigg] + C \bE_i\bigg[ \int_\ti^\tip \abs{f(Y_u,Z_u)}^2 \ud u \bigg],
			\end{align*}
		using assumption (c) on $\Hip$.
		For the second term, we use H\"older's inequality, Markov's inequality with a $p \ge 1$, and the property that $T^h(y)=y$ for $\abs{y} \le R^h$ and $\abs{T^h(y)-y} \le \abs{y}$. We obtain 
			\begin{align*}
				\Abs{ \tau^{Z,t}_i }^2 
					&\le \bE_i\Big[ \Abs{Y_\tip - T^h(Y_\tip)}^2\Big] \bE_i\Big[ \abs{H_\ip}^2 \Big]
					\\
					&\le \bE_i\Big[ \Abs{Y_\tip}^2 1_{\{\abs{Y_\ip} > R^h\}}\Big] \frac{\Lambda d}{h}
					\le \bE_i\Big[ \Abs{Y_\tip}^4 \Big]^\half  \bE_i\Big[ \abs{Y_\ip}^p \Big]^\half (R^h)^{-p/2} \frac{\Lambda d}{h}
					\\
					&\le C \bE_i\Big[ \Abs{Y_\tip}^4 \Big]^\half  \bE_i\Big[ \abs{Y_\ip}^p \Big]^\half h^{\alpha p/2 - 1},
			\end{align*}
		since $\Lambda \le 1$, and $R^h = R_0 h^{-\alpha}$.
		We take $p = \frac{4}{\alpha} \ge \frac{4}{\frac{1}{2(m-1)}}=8(m-1)$ so that $\alpha p/2 - 1 = 1$.
		Since $p \ge 4$, we know from the H\"older inequality that $\bE_i[\abs{Y_\tip}^4]^{1/4} \le \bE_i[\abs{Y_\tip}^p]^{1/p}$ and then, since $p \ge 8$ (we assumed $m \ge 2$),
			\begin{align*}
				\bE_i\Big[ \Abs{Y_\tip}^4 \Big]^\half  \bE_i\Big[ \abs{Y_\ip}^p \Big]^\half 
					&\le \bE_i\Big[ \abs{Y_\ip}^p \Big]^{2/p} \bE_i\Big[ \abs{Y_\ip}^p \Big]^\half
					\le \bE_i\Big[ \abs{Y_\ip}^p \Big],
			\end{align*}
		which proves the stated result for $\tau^Z_i$. 

	\paragraph*{}
	\emph{Estimate for $\tau^Y_i$.} 
		We first write
			\begin{align*}
				\tau^Y_i 
					&= \bE_i\bigg[ \int_\ti^\tip f(Y_u,Z_u) \ud u - f(Y_\tip,\widehat{Z}_i) h \bigg]																							\\
							&\hspace{2cm}+ \bE_i\big[ Y_\tip + f(Y_\tip, \wh Z_i) h \big] - \bE_i\big[ T^h(Y_\tip) + f\big(T^h(Y_\tip),\wh Z_i) h \big] 				
							%\\
					%&
					= \tau^{Y,d}_i + \tau^{Y,t}_i.
			\end{align*}
		Similarly to \cites{LionnetReisSzpruch2015,LionnetReisSzpruch2016}, the first term can be estimated as 
			\begin{align*}
				\Abs{ \tau^{Y,d}_i }^2 
					\le C h^2 \big( 1 + \bE_i\big[\sup_{\ti \le u \le \tip} \abs{Y_u}^{4(m-1)} \big] \big) \mathrm{REG}^{Y,4}_i(h)^\half + C h \mathrm{REG}^{Z,2}_i(h) + C h^2 \abs{\tau^Z_i}^2.
			\end{align*}
		For the second term, we use \hfregY, the Hï¿½lder inequality and $\abs{T^h(y)} \le \abs{y}$ to estimate
			\begin{align*}
				\Abs{ \tau^{Y,t}_i }^2 
					&\le C \bE_i\big[ \abs{ f(Y_\tip, \wh Z_i) - f\big(T^h(Y_\tip),\wh Z_i) }^2 \big] h^2 
					+ C \bE_i\big[ \abs{ Y_\tip - T^h(Y_\tip) }^2 \big]
					\\
					&\le C h^2 \bE_i\big[ L_y^2 (1+\abs{Y_\tip}^{m-1}+\abs{T^h(Y_\tip)}^{m-1})^2 \ \abs{Y_\tip - T^h(Y_\tip)}^2 \big] + C \bE_i\big[ \abs{Y_\ip-T^h(Y_\ip)}^2 \big] 						\\
					&\le C h^2 \bE_i\big[ (1+2\abs{Y_\tip}^{m-1})^4 \big]^\half  \bE_i\big[ \abs{Y_\tip - T^h(Y_\tip)}^4 \big]^\half + C \bE_i\big[ \abs{Y_\ip-T^h(Y_\ip)}^2 \big].
				\end{align*}
		Using $(a+b)^q \le 2^{q-1} (a^q + b^q)$ and the Markov inequality with with powers $2p$ and $p$, as previously, 
			\begin{align*}
				\Abs{ \tau^{Y,t}_i }^2 
					&\le C h^2 \bE_i\big[ (1+\abs{Y_\tip}^{4(m-1)}) \big]^\half  \bE_i\big[ \abs{Y_\tip}^8 \big]^{1/4} \bE_i\big[\abs{Y_\tip}^{2p}\big]^{1/4} h^{\alpha \, 2p /4}				\\
					&\hspace{5cm}
								+ C \bE_i\big[ \abs{Y_\tip}^4 \big]^{1/2} \bE_i\big[\abs{Y_\tip}^p\big]^{1/2} h^{\alpha p /2}
								\\
					&\le C h^2 \big(1 + \bE_i\big[\abs{Y_\tip}^{4(m-1)}\big] \big)^\half  \bE_i\big[ \abs{Y_\tip}^8 \big]^{1/4} \bE_i\big[\abs{Y_\tip}^{2p}\big]^{1/4} h^{\alpha p /2}				\\
					&\hspace{5cm}					
								+ C \bE_i\big[ \abs{Y_\tip}^4 \big]^{1/2} \bE_i\big[\abs{Y_\tip}^p\big]^{1/2} h^{\alpha p /2}.
			\end{align*}
		We take $p = \frac{6}{\alpha} \ge 12(m-1)$ so that $\alpha p/2 = 3$. 
		Since $2p \ge 8$ and $p \ge 4$, we note that we further have
			\begin{align*}
				&\bE_i\big[ \abs{Y_\tip}^8 \big]^{1/4} \bE_i\big[\abs{Y_\tip}^{2p}\big]^{1/4} 
					\le \bE_i\big[\abs{Y_\tip}^{2p}\big]^{1/p+1/4} 
					\le 1 + \bE_i\big[\abs{Y_\tip}^{2p}\big]  
				\qquad \text{and} \\
				 &\bE_i\big[ \abs{Y_\tip}^4 \big]^{1/2} \bE_i\big[\abs{Y_\tip}^p\big]^{1/2}
				 	\le \bE_i\big[\abs{Y_\tip}^p\big]^{2/p+1/2} 
				 	\le 1 + \bE_i\big[\abs{Y_\tip}^p\big].
			\end{align*}
		Finally, note that since $p \ge 12(m-1)$,
			\begin{align*}
				\big( 1 + \bE_i\big[\abs{Y_\tip}^{4(m-1)}\big] \big)^\half \big( 1 + \bE_i\big[\abs{Y_\tip}^{2p}\big] \big)	
					&
					\le \big( 1 + \bE_i\big[\abs{Y_\tip}^{4(m-1)}\big] \big) \big( 1 + \bE_i\big[\abs{Y_\tip}^{2p}\big] \big)	
					%\\
					%&\le \big( 1 + \bE_i\big[\abs{Y_\tip}^{2p}\big] \big)^2
					\\
					&\le C \big( 1 + \bE_i\big[\abs{Y_\tip}^{4p}\big] \big),
			\end{align*}
		and thus
				$\Abs{ \tau^{Y,t}_i }^2 \le \cP(Y_\tip) h^3$,
		which completes the proof.
	\end{proof}

\begin{remark}[On higher order schemes]
Notice that, formally, writing the steps of the scheme as $\Phi_i = \Phi^{0,h}_i = \Phi^0_i \circ T^h$, and denoting by $\Psi_i$ the BSDE dynamics over $[\ti,\tip]$, this proof  works essentially by decomposing $\Psi_i - \Phi_i = \Psi_i - \Phi^0_i + \Phi^0_i - \Phi_i = \Psi_i - \Phi^0_i + \Phi^0_i \circ (Id-T^h)$. Since the $p$'s can be chosen arbitrarily large in the proof above, the estimates related to the second term, i.e. to the projection $T^h$, can be of arbitrarily-high order of $h$. 
Meanwhile, the first term is the error of the non-projected scheme.
Thus, should the Full-Projection Scheme be built from a higher-order scheme $(\Phi^{+}_i)$ (see for instance \cite{Chassagneux2014} and \cite{ChassagneuxCrisan2014}), rather than the explicit BTZ scheme, we expect the resulting scheme $(\Phi^{+,h}_i=\Phi^{+}_i \circ T^h)$ to be of the same high-order.
\end{remark}
%\paragraph*{}
We can now combine the local error estimate to obtain the total time-discretization error.
	\begin{proposition}		
	\label{proposition---error.time.discretization.global.for.RV}
		There exist a constant $C \ge 0$ such that
			\begin{align*}
				\cT^Y(\pi) 
				= \sum_{i=0}^{N-1} \frac{1}{h} \bE\big[\,\abs{\tau^Y_i}^2\,\big] \le C h
				\qquad \text{and} \qquad 
				\cT^Z(\pi) 
				= \sum_{i=0}^{N-1} \bE\big[\,\abs{\tau^Z_i}^2 h\,\big] \le C h.
			\end{align*}
	\end{proposition}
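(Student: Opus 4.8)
The plan is to obtain both bounds by summing the one-step estimates of Lemma~\ref{proposition---error.time.discretization.local.for.RV} over $i$ and controlling the resulting sums with three ingredients. (i) Telescoping: after taking expectations, $\sum_{i=0}^{N-1}\bE\big[\bE_i[\int_\ti^\tip\varphi_u\,\udu]\big]=\bE[\int_0^T\varphi_u\,\udu]$. (ii) The a priori moment estimates for the solution $(Y,Z)$ of \eqref{equation---canonicBSDE} in the monotone polynomial-growth setting (cf.\ \cite{LionnetReisSzpruch2015}): $Y\in S^q$, $Z\in H^q$ for all $q\ge1$, hence by \eqref{equation---assumptions.growth} also $f(Y,Z)\in L^q([0,T]\times\Omega)$, and the polynomial prefactors are uniformly bounded, $\sup_i\bE[\cP_i(Y_\tip)^q]\le C$ and $\sup_i\bE[\cP_i(\sup_{\ti\le u\le\tip}\abs{Y_u})^q]\le C$. (iii) The path-regularity theorem (cf.\ \cite{LionnetReisSzpruch2016}), giving $\sum_{i=0}^{N-1}\bE[\mathrm{REG}^{Z,2}_i(h)]\le Ch$, together with the fourth-moment $Y$-regularity $\sum_{i=0}^{N-1}\bE\big[\sup_{\ti\le u\le\tip}\abs{Y_u-Y_\tip}^4\big]\le Ch$, which I would get from $Y_u-Y_\tip=\int_u^\tip f(Y_r,Z_r)\udr-\int_u^\tip Z_r\udwr$ by Burkholder--Davis--Gundy together with (i)--(ii).

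For $\cT^Z(\pi)$: multiply the $\tau^Z_i$-estimate by $h$, take expectations and sum; the first two terms telescope into $C\bE[\int_0^T\abs{Z_u}^2\udu]+C\bE[\int_0^T\abs{f(Y_u,Z_u)}^2\udu]<\infty$, and the last contributes $h^2\sum_i\bE[\cP_i(Y_\tip)]\le h^2 N C=CTh$; hence $\cT^Z(\pi)\le Ch$, and the same computation yields $\sum_{i=0}^{N-1}\bE[\abs{\tau^Z_i}^2]\le C$, which I reuse below. For $\cT^Y(\pi)$: divide the $\tau^Y_i$-estimate by $h$, take expectations and sum. Three of the four terms are immediate: the $\mathrm{REG}^{Z,2}_i(h)$-term contributes $C\sum_i\bE[\mathrm{REG}^{Z,2}_i(h)]\le Ch$ by (iii); the $h^2\abs{\tau^Z_i}^2$-term contributes $Ch\sum_i\bE[\abs{\tau^Z_i}^2]\le Ch$; the $\cP_i(Y_\tip)h^3$-term contributes $h^2\sum_i\bE[\cP_i(Y_\tip)]\le CTh$.

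The remaining (first) term is the delicate one: after taking expectations it is $h\sum_i\bE[\cP_i(\sup_{\ti\le u\le\tip}\abs{Y_u})\,R_i]$, where $R_i$ denotes the path-regularity factor in the first term of Lemma~\ref{proposition---error.time.discretization.local.for.RV}. I would estimate it by Cauchy--Schwarz, bounding it by $h\,\big(\sum_i\bE[\cP_i(\sup_{\ti\le u\le\tip}\abs{Y_u})^2]\big)^{1/2}\big(\sum_i\bE[R_i^2]\big)^{1/2}$; by (ii) the first factor is $\le(CN)^{1/2}=C(T/h)^{1/2}$, and $\sum_i\bE[R_i^2]\le Ch$ is exactly the fourth-moment $Y$-regularity of ingredient (iii), so the whole term is $\le h\cdot C(T/h)^{1/2}\cdot(Ch)^{1/2}=Ch$. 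Summing the four contributions gives $\cT^Y(\pi)\le Ch$.

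The main obstacle is precisely this last term. In the Lipschitz setting the one-step estimate carries no polynomial prefactors and summation is harmless; here the $\cP_i$ have only bounded (not $o(1)$) moments, so summing over the $N=T/h$ steps naively loses a factor $T/h$, and recovering the rate $h$ forces one to pair these prefactors, via H\"older/Cauchy--Schwarz, with the \emph{fourth-moment} path-regularity of $Y$, whose post-summation $O(h)$ smallness is what compensates the extra $T/h$. Everything else is the familiar telescoping-plus-path-regularity bookkeeping.
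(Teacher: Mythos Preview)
Your treatment of $\cT^Z(\pi)$ and of three of the four contributions to $\cT^Y(\pi)$ is correct and essentially matches the paper. The gap is precisely in the ``delicate'' first term, and your Cauchy--Schwarz argument there does not close.

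Recall that the regularity factor in Lemma~\ref{proposition---error.time.discretization.local.for.RV} is $R_i=\mathrm{REG}^{Y,4}_i(h)^{1/2}=\bE_i\big[\sup_{\ti\le u\le\tip}\abs{Y_u-Y_\tip}^4\big]^{1/4}$, so that $R_i^2=\bE_i\big[\sup\abs{Y_u-Y_\tip}^4\big]^{1/2}$. This is \emph{not} the quantity $\sup\abs{Y_u-Y_\tip}^4$ appearing in your ingredient (iii), and the identification ``$\sum_i\bE[R_i^2]\le Ch$ is exactly the fourth-moment $Y$-regularity'' is false. By Jensen one only gets $\bE[R_i^2]\le\bE\big[\sup\abs{Y_u-Y_\tip}^4\big]^{1/2}\le Ch$ (the individual fourth moment being $O(h^2)$), and summing over $N=T/h$ terms yields $\sum_i\bE[R_i^2]\le C$, not $Ch$. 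Your Cauchy--Schwarz then produces $h\cdot(T/h)^{1/2}\cdot C^{1/2}=Ch^{1/2}$, a half-order short. No alternative choice of H\"older exponents repairs this: the conditional exponent $1/4$ on the regularity factor is simply too small to sum against the $O(1)$ prefactors.

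The paper's remedy is not to post-process the conditional one-step lemma but to go back into its proof: one takes the full expectation \emph{before} applying Cauchy--Schwarz. The random prefactor $\cP_i\big(\sup_u\abs{Y_u}\big)$ is then replaced by the constant $C\big(1+\bE[\sup_u\abs{Y_u}^{4(m-1)}]\big)$, and the regularity factor becomes $\bE\big[\sup_{\ti\le u\le\tip}\abs{Y_u-Y_\tip}^4\big]^{1/2}$ under a full expectation. Since each such term is $O(h)$, the sum over $i$ is $O(1)$, and the whole contribution is $Ch$. In short, the decoupling of the polynomial weight from the path-regularity must happen \emph{inside} the local error estimate, not after summation.
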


	\begin{proof}
		First, notice that in the previous proof, immediately after squaring and before using any Cauchy--Schwarz inequality, we can take the expectation.
		So all conditional expectations can be replaced by expectations.
		Second, it is a standard bound for the solution to the BSDE (see \cite{LionnetReisSzpruch2015}, section 2) that for all $q \ge 1$, there exists $C_q \ge 0$ such that
		$\bE\big[ \sup_{0\le t \le T} \abs{Y_t}^q \big] \le C_q$.
		We therefore see that for all $i$, $\cP\big(Y_\tip\big) \le C$ and $\cP\big(\sup_{\ti \le u \le \tip} \abs{Y_u}\big) \le C$, 
		where $\cP$ is the analogue of $\cP_i$ with expectations instead of conditional expectations.
		For the $\tau^Z_i$'s this yields
			\begin{align*}
				\sum_{i=0}^{N-1} \bE[\abs{\tau^Z_i}^2] \le C \bE\bigg[ \int_0^T \abs{Z_u}^2 du \bigg] + C \bE\bigg[ \int_0^T \abs{f(Y_u,Z_u)}^2 \ud u \bigg] + C,
			\end{align*}
		and the result follows from standard bounds for the solution to the BSDE .
		For the $\tau^Y_i$'s this yields
			\begin{align*}
				\sum_{i=0}^{N-1} \frac{1}{h} \bE[\abs{\tau^Y_i}^2] 
					&\le C h \sum_{i=0}^{N-1} \bE\big[\sup_{u\in[\ti,\tip]} \abs{Y_u-Y_\tip}^4\big]^\half 
									+ C \sum_{i=0}^{N-1} \bE\bigg[\int_\ti^\tip \abs{Z_u-\wb Z_\ti}^2\bigg] 							
									+ C \cT^Z(\pi)
									+ C T h,
			\end{align*}
		and the results then follows from the path-regularity theorems (see \cite{LionnetReisSzpruch2015}*{Section 3.4}) and the bound for $\cT^Z(\pi)$.
	\end{proof}

%//////////////////////////////////////////////////////////////////////////////////////////////////////////////////////////////////////////////////////////////////////////////////////////
%//////////////////////////////////////////////////////////////////////////////////////////////////////////////////////////////////////////////////////////////////////////////////////////
%\newpage 
\subsection{Convergence of the scheme} 
\label{sec:proofofConvTheorem} 

	\begin{proof}[Proof of Theorem \ref{proposition---MainTheorem}]
		Given all the cumputations in the previous section, we decompose the error at time $\ti$ in the usual way of local errors plus the propagation of the error existing at time $\tip$:
			\begin{align*}
				& Y_\ti - Y_i = Y_\ti - \wh Y_i + \wh Y_i - Y_i = \tau^Y_i + \Phi^Y_i(Y_\tip) - \Phi^Y_i(Y_\ip)
				\qquad \text{and} \\
				& \wb Z_\ti - Z_i = \wb Z_\ti - \wh Z_i + \wh Z_i - Z_i = \tau^Z_i + \Phi^Z_i(Y_\tip) - \Phi^Z_i(Y_\ip)				
			\end{align*}
		where the random variables $(\wh Y_i,\wh Z_i)$ are defined in \eqref{equation---definition.of.hatYi.hatZi}.
		Given the stability of the scheme stated in Proposition \ref{proposition---stability.of.the.scheme}, 
		we can square the above and ``iterate'' (literally if we are only interested in $\mathrm{ERR}^Y(\pi)$, using in fact Lemma A.3 in \cite{LionnetReisSzpruch2016} if not) to obtain that
			\begin{align*}
				\abs{Y_\ti-Y_i}^2 + \bE_i\bigg[ \sum_{j=i}^{N-1} \abs{\wb Z_\tj-Z_j}^2 h \bigg] 
						\le e^{c (T-\ti)} \bE_i\bigg[ \abs{g(X_T)-g(X_N)}^2 + C \ \sum_{j=i}^{N-1}  \frac{1}{h} \abs{\tau^Y_j}^2 + \abs{\tau^Z_j}^2 h \bigg] .
			\end{align*}
		It then follows that
			\begin{align*}
				\mathrm{ERR}^Y(\pi)^2 + \mathrm{ERR}^Z(\pi)^2 
						\le C \bigg\{ \bE[\abs{g(X_T)-g(X_N)}^2] + \sum_{i=0}^{N-1}  \frac{1}{h} \bE[\abs{\tau^Y_i}^2] + \bE[\abs{\tau^Z_i}^2] h \bigg\} .
			\end{align*}
		As $g$ is Lipschitz, the explicit Euler scheme for $X$ has squared error bounded by $C h$, 
		and we have estimated the sum of local time-discretization errors in Proposition \ref{proposition---error.time.discretization.global.for.RV},
		the proof is complete.		
	\end{proof}

%%%%%%%%%%%%%%%%%%%%%%%%%%%%%%%%%%%%%%%%%%%%%%%%%%%%%%%%%%%%%%%%%%%%%%%%%%%%%%%%%%%%%%%%%%%%%%%%%%%%%%%%%%%%%%%%%%%%%%%%%%%%%%%%%%%%%%%%%%%%%%%%%%%
%%%%%%%%%%%%%%%%%%%%%%%%%%%%%%%%%%%%%%%%%%%%%%%%%%%%%%%%%%%%%%%%%%%%%%%%%%%%%%%%%%%%%%%%%%%%%%%%%%%%%%%%%%%%%%%%%%%%%%%%%%%%%%%%%%%%%%%%%%%%%%%%%%%
%%%%%%%%%%%%%%%%%%%%%%%%%%%%%%%%%%%%%%%%%%%%%%%%%%%%%%%%%%%%%%%%%%%%%%%%%%%%%%%%%%%%%%%%%%%%%%%%%%%%%%%%%%%%%%%%%%%%%%%%%%%%%%%%%%%%%%%%%%%%%%%%%%%
%\newpage
\section{Analysis of the full scheme's numerical error} 		
%\label{section---numerical.scheme.and.results} 

We now go back to Section \ref{section---numerical.scheme.and.results} and prove Theorem \ref{proposition---weak.error.estimate.global}. At this point we invite the reader to recall Section \ref{sec:BasicApproxofExpectations} and \ref{sec:mainresultNumerics} where the introduced a simpler version of the spatial discretization procedure. In the remainder of this section, we will work with the general version.

Recall the PDE \eqref{eq:PDEforErrorEstimate}, we write $y_i = y(\ti,\cdot)$ and $z_i = z(\ti,\cdot)$. As in \cite{Chassagneux2014}, we compare the process $(\wb Y_i)$ in \eqref{equation---reference-YZ.scheme--fully.discretized} with the process $(\cY_i)$ where $\cY_i = y_i(\wb X_i)$ (from the spatial discretization). 
In order to write the decomposition of the error and analyse it, we introduce a family of operators.

Let $\Phi_i : u \mapsto (\Phi^Y_i(u),\Phi^Z_i(u))$ be defined, for a measurable map $u$, as 
	\begin{align*}
		&\left\{\begin{aligned}
			\Phi^Y_i(u)(x) = \wh Y_i^{\ti,x} &:= \bE^{\ti,x}\Big[ T^h(u(X_\ip)) + f\big(T^h(u(X_\ip)),\Phi^Z_i(u)(x)\big) h \Big]				\\
			\Phi^Z_i(u)(x) = \wh Z_i^{\ti,x} &:= \bE^{\ti,x}\big[  T^h(u(X_\ip)) \Hip^* \big], 
		\end{aligned}\right. 
	\end{align*}
where $X_\ip = x + b(\ti,x) h + \sigma(\ti,x)\Delta W_\tip$ is the output of the Euler scheme \eqref{equation---reference--X.Euler.scheme} started from $x$ at $\ti$.
We define for all $i = 0, \ldots, N-1$, $(\wh y_i,\wh z_i):=\Phi_i(y_\ip)$.

Similarly, let $\wb \Phi_i : u \mapsto (\wb\Phi^Y_i(u),\wb\Phi^Z_i(u))$ be defined by 
	\begin{align*}
		&\left\{\begin{aligned}
			\wb\Phi^Y_i(u)(x) = \wb Y_i^{\ti,x} 
			&:= \bE^{\ti,x}\Big[ T^h(u(\wb X_\ip)) + f\big(T^h(u(\wb X_\ip)), \wb\Phi^Z_i(u)(x) \big) h \Big]		
			\\
			\wb\Phi^Z_i(u)(x) = \wb Z_i^{\ti,x} 
			&:= \bE^{\ti,x}\big[  T^h(u(\wb X_\ip)) \wbHip^* \big], 
		\end{aligned}\right. 
	\end{align*}
where $\wb X_\ip = \Pi\big[ \wt X_\ip \big] = \Pi \big[ x + b(\ti,x) h + \sigma(\ti,x)\wb{\Delta W}_\tip \big]$ is the output of the quantized schemed \eqref{equation---reference--X.scheme--fully.discretized} started from $x$ at $\ti$.
We let, for all $i = 0, \ldots, N-1$, $(\rh y_i, \rh z_i) = \wb \Phi_i(y_\ip)$.
Expressing the scheme \eqref{equation---reference-YZ.scheme--fully.discretized} in terms of the $\wb \Phi_i$ we thus see that $\wb y_i$ and $\wb z_i$ are defined recursively: for all $i$, $(\wb y_i,\wb z_i) = \wb \Phi_i(\wb y_\ip)$.

Finally, let $\Psi_i : u \mapsto (\Psi^Y_i(u),\Psi^Z_i(u))$ where, for all $x$, $(\Psi^Y_i(u)(x),\Psi^Z_i(u)(x)) = (Y^{\ti,x,u}_\ti,\wb Z^{\ti,x,u}_\ti)$, where $\wb Z^{\ti,x,u}_\ti = h^{-1}\bE_i[\int_\ti^\tip Z^{\ti,x,u}_t dt]$, $(Y^{\ti,x,u}_t,Z^{\ti,x,u}_t)_{t \in [\ti,\tip]}$ is the solution over $[\ti,\tip]$ to the BSDE 
with driver $f$ and terminal condition $u(X^{\ti,x}_\tip)$ at time $\tip$, and $(X^{\ti,x}_t)_{t \in [\ti,\tip]}$ is the solution to the SDE \eqref{equation---canonicSDE} starting from $x$ at $\ti$.
Naturally, we have for all $i$, $y_i = \Psi^Y_i(y_\ip)$.

Note that we are overloading the operators $\Psi_i$ and $\Phi_i$. Nonetheless, there is no ambiguity, since the previously introduced versions take random variables as inputs  whereas the ones above take functions as inputs.

%***********************************************************************************************************************************************************************************************************
\subsection{Proof of the total error estimate (Theorem \ref{proposition---weak.error.estimate.global})} 
\label{sec:proofofmainnumericstheorem}

		The strategy is similar to the proof of Theorem \ref{proposition---MainTheorem}.
		We decompose the error at time $\ti$ as a local error plus the propagation of the error existing at time $\tip$.
		We thus write, for any $x=\wb X^{0,x_0}_i$, 
			\begin{align*}
				y_i(x) - \wb y_i(x)
					&= \big( y_i(x) - \wh y_i(x) \big) + \big( \wh y_i(x) - \rh y_i(x) \big) + \big( \rh y_i(x) - \wb y_i(x) \big)	
					\\
					&= \big[ \Psi^Y_i(y_\ip) - \Phi^Y_i(y_\ip) + \Phi^Y_i(y_\ip) - \wb\Phi^Y_i(y_\ip) + \wb\Phi^Y_i(y_\ip) - \wb\Phi^Y_i(\wb y_\ip) \big](x)
					\\
					&= \tau^Y_i(x) + \epsilon^Y_i(x) + \rho^Y_i(x).
			\end{align*}
		The difference with the proof of Theorem \ref{proposition---MainTheorem} is that the local error now has two sources: the time discretization, $\tau^Y_i$, 
		and the approximation of the expectation, $\epsilon^Y_i$.
		Again, the key for iterating is a stability estimate for the full scheme. This then yields the form of the global error estimate.

		%_________________________________________________________________________________________________________________________________________________________________________________________
		\paragraph*{}\emph{Stability of the scheme.}
			Since $\wbHip$ satisfies the assumptions (a) and (b) satisfied by $\Hip$, 
			the proof of Proposition \ref{proposition---stability.of.the.scheme} carries over to prove that the scheme $\wb \Phi_i$ is stable.
			So we have 
				\begin{align*}
					\abs{\rho^Y_i(x)}^2 
					= \abs{ \wb\Phi^Y_i(y_\ip)(x) - \wb\Phi^Y_i(\wb y_\ip)(x) }^2 	
					\le e^{c h} \ \bE^{\ti,x}\big[ \abs{ (y_\ip - \wb y_\ip) (\wb X^{\ti,x}_\ip) }^2 \big] .
				\end{align*}

	%_________________________________________________________________________________________________________________________________________________________________________________________
		\paragraph*{}\emph{Global error bound.}
			Taking the square in the decomposition above and iterating we obtain
				\begin{align*}
					\abs{y_i(x) - \wb y_i(x)}^2
						&\le e^{c(T-\ti)} \bE^{\ti,x}\Big[ \abs{(y_N-\wb y_N)(\wb X^{\ti,x}_N)}^2 
						     + C \sum_{j=i}^{N-1} \frac{1}{h} \abs{(\tau^Y_j+\epsilon^Y_j)(\wb X^{\ti,x}_j)}^2 \Big] .
				\end{align*}
			Consequently, we have for $i=0$, using $(a+b)^2 \le 2(a^2+b^2)$, $y_N=\wb y_N=g$, and with $\wb X_i = \wb X^{0,x_0}_i$,
				\begin{align} 	\label{equation---error.estimate.full.scheme.general.form}
					\abs{Y_0 - \wb Y_0}^2 = \abs{y_0(x_0) - \wb y_0(x_0)}^2
						&\le 
							C \ \sum_{i=0}^{N-1} \frac{1}{h} 
							         \bE^{0,x_0}\big[ \abs{\tau^Y_i(\wb X_i)}^2 + \abs{\epsilon^Y_i(\wb X_i)}^2 \big] .
				\end{align}
			Thus, proving Proposition \ref{proposition---weak.error.estimate.global} for the full scheme reduces to proving the following two lemmas,
			which allow to bound the RHS of the fundamental estimate \eqref{equation---error.estimate.full.scheme.general.form},
			since for any $p \ge 1$ there exists $C_p$ such that $\sup_i \bE^{0,x_0}[\abs{\wb X_i}^p] \le C_p$.
			Similarly to Lemma \ref{proposition---error.time.discretization.local.for.RV} and Proposition \ref{proposition---error.time.discretization.global.for.RV}, 
			we denote by $P(x)$ the quantity $C(1+\abs{x}^p)$, for constants $C$ and $p$ not depending on $\pi$ and which may vary from line to line (this notation is linked to that used in Section \ref{sec:LocalTime-DiscretizationError}).

	\begin{lemma}	\label{lemma---weak.local.error.time-discretization}
		For any $i\in\{0,\cdots,N-1\}$ and $x\in \Gamma$ we have
		$\abs{\tau^Y_i(x)} \le P(x) h^2$. 
	\end{lemma}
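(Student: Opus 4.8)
The plan is to replay the two-part pattern of the proof of Lemma~\ref{proposition---error.time.discretization.local.for.RV}: split off the projection $T^h$ as a remainder that can be made of arbitrarily high order in $h$, and treat the remaining one-step error of the plain explicit Euler--BTZ step by an It\^o--Taylor expansion exploiting the $C^3$ regularity. First I would record that, by the nonlinear Feynman--Kac formula, the map $(t,\xi)\mapsto y(t,\xi)$ solves \eqref{eq:PDEforErrorEstimate} with terminal value $g$, so by uniqueness it is also the $Y$-component of the BSDE over $[\ti,\tip]$ driven by $f$ with forward process $X^{\ti,x}$ and terminal condition $y_\ip(X^{\ti,x}_\tip)$; hence $\Psi^Y_i(y_\ip)(x)=Y^{\ti,x}_\ti=y(\ti,x)$ and $\tau^Y_i(x)=y(\ti,x)-\wh y_i(x)$. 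With $\wt X_\ip=x+b(\ti,x)h+\sigma(\ti,x)\Delta W_\tip$ the Euler step, I write $\wh z^{0}_i(x):=\bE^{\ti,x}[\,y_\ip(\wt X_\ip)\,\Hip^*\,]$ and $\wh y^{0}_i(x):=\bE^{\ti,x}[\,y_\ip(\wt X_\ip)+f(y_\ip(\wt X_\ip),\wh z^{0}_i(x))\,h\,]$ for the non-truncated outputs, and decompose $\tau^Y_i(x)=\tau^{Y,t}_i(x)+\tau^{Y,d}_i(x)$ with $\tau^{Y,t}_i(x)=\wh y^{0}_i(x)-\wh y_i(x)$ and $\tau^{Y,d}_i(x)=y(\ti,x)-\wh y^{0}_i(x)$.

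The term $\tau^{Y,t}_i(x)$ involves only the differences $y_\ip(\wt X_\ip)-T^h(y_\ip(\wt X_\ip))$ (once directly, once through \hfregY, and once through \hfreg applied to $\wh z^{0}_i(x)-\wh z_i(x)$). Since $y$ has polynomial growth and $\bE^{\ti,x}[|\wt X_\ip|^q]\le P(x)$ for every $q$, the very computation used for $\tau^{Y,t}_i$ in Lemma~\ref{proposition---error.time.discretization.local.for.RV} --- H\"older's and Markov's inequalities with an arbitrarily large power, together with $R^h=R_0h^{-\alpha}$ and $\bE^{\ti,x}[|\Hip|^2]\le d h^{-1}$ --- gives $|\tau^{Y,t}_i(x)|\le P(x)\,h^{K}$ for $K$ as large as we wish, in particular $|\tau^{Y,t}_i(x)|\le P(x)\,h^{2}$.

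For $\tau^{Y,d}_i(x)$, let $\chi_u:=x+b(\ti,x)(u-\ti)+\sigma(\ti,x)(W_u-W_\ti)$, $u\in[\ti,\tip]$ (so $\chi_\tip=\wt X_\ip$), let $\cL^{\ti,x}\varphi:=\partial_x\varphi\,b(\ti,x)+\half\trace[\partial^2_{xx}\varphi\,\sigma\sigma^*(\ti,x)]$ be the frozen-coefficient generator and $\cL$ the true one. It\^o's formula applied to $u\mapsto y(u,\chi_u)$ and the PDE \eqref{eq:PDEforErrorEstimate} ($\partial_t y=-\cL y-f(y,z)$) give
\begin{align*}
	\tau^{Y,d}_i(x) &= \bE^{\ti,x}\Big[\int_\ti^\tip f(y,z)(u,\chi_u)\,\udu\Big]-\bE^{\ti,x}\big[f(y_\ip(\wt X_\ip),\wh z^{0}_i(x))\big]\,h \\
	&\quad + \bE^{\ti,x}\Big[\int_\ti^\tip \big((\cL-\cL^{\ti,x})y\big)(u,\chi_u)\,\udu\Big].
\end{align*}
I would then show each of the three contributions is $O(P(x)h^2)$. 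For the first, $u\mapsto f(y,z)(u,\chi_u)$ is an It\^o process with polynomially growing coefficients (by the $C^3$ hypothesis on $f$, $y$, $z$), so its increments have drift $O(u-\ti)$ and, after $\bE^{\ti,x}[\int_\ti^\tip\cdot\,\udu]$, the martingale part vanishes by stochastic Fubini; thus this term equals $f(y,z)(\ti,x)h+O(P(x)h^2)$. For the second, Taylor-expanding $f$ around $(y(\ti,x),z(\ti,x))$ and using $\bE^{\ti,x}[y_\ip(\wt X_\ip)-y(\ti,x)]=O(P(x)h)$ together with $\wh z^{0}_i(x)-z(\ti,x)=O(P(x)h)$ --- the latter via Gaussian integration by parts, $\bE^{\ti,x}[y_\ip(\wt X_\ip)\Hip^*]=\bE^{\ti,x}[\partial_x y_\ip(\wt X_\ip)]\sigma(\ti,x)+O(P(x)h^2)$ (property (c), plus the fact that truncating the increment at $r^h=\sqrt{2h}\ln(1/h)$ alters it only on an event of super-polynomially small probability), followed by a Taylor expansion of $\partial_x y_\ip$ --- one gets $f(y,z)(\ti,x)h+O(P(x)h^2)$ as well, so these two terms cancel to order $h^2$. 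For the third, $(\cL-\cL^{\ti,x})y$ vanishes at $(\ti,x)$, so a second-order Taylor expansion of $b,\sigma,\partial_x y,\partial^2_{xx}y$ around $(\ti,x)$ combined with $\bE^{\ti,x}[\chi_u-x]=b(\ti,x)(u-\ti)$, $\bE^{\ti,x}[(\chi_u-x)(\chi_u-x)^*]=\sigma\sigma^*(\ti,x)(u-\ti)+O((u-\ti)^2)$ and the vanishing of odd Gaussian moments leaves an integrand with $\bE^{\ti,x}$-mean $O(u-\ti)$, hence $O(P(x)h^2)$ after integration. Combining, $|\tau^Y_i(x)|\le|\tau^{Y,t}_i(x)|+|\tau^{Y,d}_i(x)|\le P(x)h^2$.

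The hard part will be purely the bookkeeping in the last two contributions: one must expand $f$, $y$, $z$, $b$, $\sigma$ to second order and verify that every term of order $h^{1/2}$ or $h^{3/2}$ either has vanishing $\bE^{\ti,x}$-mean (odd Gaussian moment, or $\bE^{\ti,x}$ of a martingale increment) or is cancelled by the matching term produced by freezing the coefficients or by the It\^o correction, so that only the wanted $O(P(x)h^2)$ survives --- the standard, if tedious, one-step weak-error computation. The projection remainder $\tau^{Y,t}_i$ needs nothing beyond Lemma~\ref{proposition---error.time.discretization.local.for.RV}.
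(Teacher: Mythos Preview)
Your argument is correct and reaches the same conclusion, but by a genuinely different route from the paper.

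The paper pivots through the \emph{true} diffusion $X^{\ti,x}$: it first proves $|\tau^Z_i(x)|\le P(x)h$ via a four-term decomposition (It\^o expansion along $X^{\ti,x}$, plus the local weak error of the Euler scheme for smooth test functions, citing Kloeden--Platen), and then writes $\tau^Y_i(x)=(y_i-\wt y_i)(x)+(\wt y_i-\wh y_i)(x)$ with the intermediate $\wt y_i(x)=\bE^{\ti,x}[\,y_\ip(X^{\ti,x}_\tip)+f(y_\ip(X^{\ti,x}_\tip),\wb Z^{\ti,x}_\ti)\,h\,]$ again built on $X^{\ti,x}$. The first difference is handled by It\^o's formula along the true SDE with generator $L$, the second by the truncation bound plus the black-box weak Euler error; the previously obtained $|\tau^Z_i(x)|\le P(x)h$ is then fed into the $\beta^{Y,3}$ term.

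You bypass the true SDE entirely: the frozen-coefficient interpolation $\chi_u$ connects $x$ directly to the Euler output $\wt X_\ip$, and the mismatch with the PDE is absorbed into the generator-difference term $\int(\cL-\cL^{\ti,x})y(u,\chi_u)\,\udu$. Likewise, instead of estimating $\tau^Z_i$ you compute $\wh z^0_i(x)-z_i(x)$ directly by Gaussian integration by parts. This is more self-contained and elementary --- you are effectively re-deriving the one-step weak Euler error inside the proof rather than importing it --- at the price of the bookkeeping you flag. The paper's approach is more modular: it cleanly separates the BSDE time-discretisation (along the true $X$) from the SDE discretisation and cites the latter. Both routes need enough smoothness of $b,\sigma$ for the $O(h^2)$ weak error; in your version this shows up explicitly when you Taylor-expand $b,\sigma$ in the third contribution, while the paper leaves it inside the Kloeden--Platen citation.
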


	\begin{lemma}	\label{lemma---weak.local.error.approximated-expectations}
		For any $i\in\{0,\cdots,N-1\}$ and $x\in \Gamma_i$, if $q \ge 3$, we have
				$\abs{\epsilon^Y_i(x)} \le P(x) h^2 + C \eta$.
	\end{lemma}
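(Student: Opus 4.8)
The plan is to read $\epsilon^Y_i(x)=\Phi^Y_i(y_\ip)(x)-\wb\Phi^Y_i(y_\ip)(x)$ as the one-step error between two maps that share the same frozen Euler coefficients $b(\ti,x),\sigma(\ti,x)$ and differ only through: (i) the law of the driving increment of the forward step --- the exact $\cN(0,hI_d)$ increment $\Delta W_\tip$ versus the trinomial-tree increment $\wb{\Delta W}_\tip$, whose moments up to order $q\ge 3$ coincide and which are both symmetric; (ii) the spatial projection $\Pi$ onto $\Gamma$ entering $\wb X_\ip=\Pi[\wt X_\ip]$ with $\wt X_\ip:=x+b(\ti,x)h+\sigma(\ti,x)\wb{\Delta W}_\tip$; and (iii) the corresponding change of weight from $\Hip$ to $\wbHip=\wb{\Delta W}_\tip/h$ (for $h$ small, $T^{r^h}$ acting as the identity on the tree support), which also satisfies (a)--(b). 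Adding and subtracting the no-projection intermediate map I would split $\epsilon^Y_i=\epsilon^{Y,\mathrm{quant}}_i+\epsilon^{Y,\mathrm{proj}}_i$, and deal first with the feedback through the $z$-slot of $f$.

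For the $z$-feedback, \hfreg gives that, beyond the explicit change of the first argument, the two $f(T^h(\cdot),\cdot)\,h$ terms differ by at most $L_z h\,\abs{\wh z_i(x)-\rh z_i(x)}$, and $\wh z_i(x)-\rh z_i(x)$ is an error of the same two kinds as $\epsilon^Y_i$ but carrying an extra $h^{-1}$ from the weight. Recentring $T^h(y_\ip(\cdot))$ by the deterministic constant $T^h(y_\ip(x+b(\ti,x)h))$ (legitimate since $\bEix[\Hip]=\bEix[\wbHip]=0$), then using that $T^h$ is $1$-Lipschitz and $y_\ip$ is locally Lipschitz with polynomially growing constant, together with the one-step moment bound $\bEix[\abs{\wt X_\ip}^p]\le P(x)$, yields $\abs{\wh z_i(x)-\rh z_i(x)}\le P(x)(h+\eta h^{-1})$, hence a contribution $L_z h\,(\cdots)\le P(x)h^2+C\eta$ to $\epsilon^Y_i$. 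For $\epsilon^{Y,\mathrm{proj}}_i$, the dominant term is $\bEix[T^h(y_\ip(\wt X_\ip))-T^h(y_\ip(\wb X_\ip))]$; since $T^h$ is $1$-Lipschitz, $y_\ip$ locally Lipschitz, and $\abs{\wt X_\ip-\wb X_\ip}=\abs{\wt X_\ip-\Pi[\wt X_\ip]}\le\eta$, a mean-value estimate bounds it by $C\eta$, the accompanying $h\,\bEix[f(\cdots)-f(\cdots)]$ term being of strictly lower order by the same argument and \hfregY. It then remains to estimate $\epsilon^{Y,\mathrm{quant}}_i$.

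This quantity is, up to an $O(h^2)$ term coming from the $h\cdot f$ part, $\bEix[\psi(\Delta W_\tip)]-\bEix[\psi(\wb{\Delta W}_\tip)]$ with $\psi(w):=T^h(y_\ip(x+b(\ti,x)h+\sigma(\ti,x)w))$. Here the mollified $T^h$ is smooth, and the regularity assumed on $y$ and $z$ makes $\psi$ smooth enough in $w$ (with polynomially growing derivatives, the $x$-dependence being of the form $P(x)$) to Taylor-expand it to third order with a genuinely fourth-order remainder. The matching of the first $q\ge 3$ moments of the two increment laws --- the odd ones vanishing by symmetry --- cancels the whole Taylor polynomial, and the fourth-order remainder is controlled, separately for each law, by $P(x)\,\bE[\abs{\Delta W_\tip}^4]\le P(x)h^2$ on the Gaussian side (the Gaussian tails absorbing the polynomial growth of the remainder) and by $P(x)(c\sqrt h)^4=P(x)h^2$ on the tree side, whose support lies in the ball of radius $c\sqrt h$. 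Collecting (i)--(iii) gives $\abs{\epsilon^Y_i(x)}\le P(x)h^2+C\eta$, which, inserted into \eqref{equation---error.estimate.full.scheme.general.form} alongside Lemma \ref{lemma---weak.local.error.time-discretization} and the uniform moment bounds for $\wb X_i$, yields Theorem \ref{proposition---weak.error.estimate.global}.

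The main obstacle is the sharpness of the quantization estimate. For Euler-type one-step maps driven by a Gaussian-like increment, the crude Taylor-remainder bound on a merely $C^3$ integrand only gives $O(h^{3/2})$, which would weaken Theorem \ref{proposition---weak.error.estimate.global} to $\abs{Y_0-\wb Y_0}\le Ch^{1/2}+C\eta h^{-1}$. Obtaining the full $O(h^2)$ is exactly where the tree's moment matching to order $q\ge 3$ and the symmetry of both laws are used (they annihilate every Taylor term of order $\le 3$, in particular the would-be $O(h^{3/2})$ term), together with the bit of regularity of $y$ beyond $C^3$ implicit in having $z=\partial_x y\,\sigma$ of class $C^3$ as well. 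The remaining work is bookkeeping: tracking the polynomial-in-$x$ weights through the expansions, and checking that the (bounded, $h$-uniform) derivatives of the mollified truncation $T^h$ do not degrade any of the rates.
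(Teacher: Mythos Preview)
Your decomposition and overall strategy coincide with the paper's: split $\epsilon^Y_i$ into a quantization piece (change of increment law $\Delta W_\tip\to\wb{\Delta W}_\tip$), a projection piece ($\wt X_\ip\to\wb X_\ip=\Pi[\wt X_\ip]$), and a $z$-feedback piece controlled by $L_z h\,|\epsilon^Z_i(x)|$ via \hfreg; then use Taylor expansion plus moment matching to order $q\ge 3$ for the quantization piece and the Lipschitz bound $|\wt X_\ip-\wb X_\ip|\le\eta$ for the projection piece. Your discussion of why moment matching and symmetry upgrade the crude $O(h^{3/2})$ Taylor remainder to $O(h^2)$ is exactly the point.

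There is, however, a genuine gap in your treatment of $\epsilon^Z_i=\wh z_i(x)-\rh z_i(x)$. You claim that ``recentring by $T^h(y_\ip(x+b(\ti,x)h))$ and using that $T^h$, $y_\ip$ are Lipschitz'' yields $|\epsilon^Z_i(x)|\le P(x)(h+\eta h^{-1})$. Recentring plus Lipschitz only gives that each of the two expectations is $O(1)$: the recentred integrand is $O(|\sigma\Delta W|)=O(h^{1/2})$ and the weight is $O(h^{-1/2})$, so no rate survives for the \emph{difference}. To obtain the needed $P(x)h$ (or, as the paper writes, $P(x)h^{(q-1)/2}$) for the quantization part of $\epsilon^Z_i$, you must run the \emph{same} Taylor expansion plus moment-matching argument you correctly use for $\epsilon^{Y,\mathrm{quant}}_i$, now applied to $\varphi(w)=T^h\big(y_\ip(x+bh+\sigma w)\big)\,T^{r^h}(w)^*/h$; the extra $h^{-1}$ from the weight is what turns $h^{(q+1)/2}$ into $h^{(q-1)/2}$. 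The paper does this explicitly, getting $|\epsilon^Z_i(x)|\le P(x)h^{(q-1)/2}+C\eta h^{-1/2}$, and then $L_z h\cdot|\epsilon^Z_i|\le P(x)h^{(q+1)/2}+C\eta h^{1/2}$, which is absorbed into $P(x)h^2+C\eta$ once $q\ge 3$. With that correction your proof goes through and matches the paper's.
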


		Note that there is no term $\tau^Z+\epsilon^Z$ in \eqref{equation---error.estimate.full.scheme.general.form} since we only iterated the $Y$-estimate.
		However these terms still appear in the estimation of $\tau^Y+\epsilon^Y$. We start with those in each case.

	%_________________________________________________________________________________________________________________________________________________________________________________________
	\begin{proof}[Proof of lemma \ref{lemma---weak.local.error.time-discretization}]
	
		%---------------------------------------------------------------------------			
			We first consider $\tau^Z_i$ defined for any $i$ and $x\in \Gamma_i$ by
				\begin{align*}
					\tau^Z_i(x) := \Psi^Z_i(y_\ip)(x) - \Phi^Z_i(y_\ip)(x)
							= \wb Z^{\ti,x}_\ti - \wh z_i(x).
				\end{align*}
			We note that
				\begin{align*}
					\wb Z^{\ti,x}_\ti - \wh z_i(x) 
						&= \bEix\bigg[ \int_\ti^\tip z(u,X_u) dW_u \ \frac{\Delta W_\tip}{h} \bigg] - \bEix\Big[ T^h\big(y_\ip(X_\ip)\big) \Hip \Big]	\\
						&= \bEix\bigg[ \int_\ti^\tip z(u,X_u) dW_u \Big( \frac{\Delta W_\tip}{h} - \Hip \Big) \bigg]														\\
									& \qquad + \bEix\bigg[ \Big( y_\ip(X_\tip) + \int_\ti^\tip f\big(y(u,X_u),z(u,X_u)\big)du - y_\ip(X_\tip) \Big) \ \Hip \bigg]	 				\\
									& \qquad + \bEix\bigg[ \Big( y_\ip(X_\tip) - T^h\big( y_\ip(X_\tip) \big) \Big) \ \Hip \bigg]	 				\\
									& \qquad + \bEix\bigg[ \Big( T^h\big( y_\ip(X_\tip)\big) - T^h\big( y_\ip(X_\ip)\big) \Big) \ \Hip \bigg]		\\
						&= \alpha^{Z,1}_i(x) + \alpha^{Z,2}_i(x) + \alpha^{Z,3}_i(x) + \alpha^{Z,4}_i(x).
				\end{align*}
			For the first term, we use the Cauchy--Schwartz inequality to write
				\begin{align*}
					\abs{\alpha^{Z,1}_i(x)} \le \bEix\bigg[ \int_\ti^\tip \abs{z(u,X_u)}^2 du \bigg]^{1/2} \bEix\bigg[ \Abs{ \frac{\Delta W_\tip}{h} - \Hip }^2 \bigg]^{1/2}
						&\le P(x) h^{1/2} \ C h^{1/2} = P(x) h,
				\end{align*}
			given the growth assumption on $z$, the integrability of $(X_u)_{\ti \le u \le \tip}$ and assumption (c) on $\Hip$.			
			For the second term, we use It\^o's formula and the martingale property of $\Hip$, to have
				\begin{align*}
					\alpha^{Z,2}_i(x) = 0 + \bEix\bigg[ \int_\ti^\tip \int_\ti^u L\big(f(y,z)\big)(t,X_t) dt + (\partial_x f(y,z) \, \sigma)(t,X_t) dW_t \, du \ \ \Hip \bigg],
				\end{align*}
			where $f(y,z)$ here is $(t,x) \mapsto f\big(y(t,x),z(t,x)\big)$, and we denote by $L(\varphi)$, for any smooth function $\varphi$, the function $(t,x) \mapsto L(\varphi)(t,x)$ defined by
				\begin{align*}
					L(\varphi)(t,x) = \partial_t \varphi(t,x) + \half \trace \big[ \partial_{xx} \varphi \ \sigma \sigma^* \big](t,x)+ \big(\partial_x \varphi \ b\big)(t,x) . 
				\end{align*}
			Using again the Cauchy--Schwartz inequality, the growth of $y$, $z$ and the coefficients, the integrability $(X_u)_{\ti \le u \le \tip}$, and assumption (b) on $\Hip$, 
			we thus find that $\abs{\alpha^{Z,2}_i(x)} \le P(x) h$.
			For the third term, using the Markov inequality in the same way as in the proof of lemma \ref{proposition---error.time.discretization.local.for.RV}, we can obtain arbitrarily-high orders of $h$,
			in particular we can obtain $\abs{\alpha^{Z,3}_i(x)} \le P(x) h$.
			Finally, for the fourth term, denoting by $\varphi : (x,\delta w) \mapsto T^h\big( y_\ip(x) \big) \ T^{r^h}(\delta w) / h$, we have
				\begin{align*}
					\alpha^{Z,4}_i(x) = \bEix\Big[ \varphi(X_\tip,\Delta W_\tip) \Big] - \bEix\Big[ \varphi(X_\ip,\Delta W_\tip) \Big].
				\end{align*}
			Since $\varphi$ is smooth with polynomially-growing derivatives and the Euler scheme has local weak error of order $h^2$, cf \cite{KloedenPlaten1992}, 
			this term can be bounded by $P(x)h$.

			In the end, we have obtained $\abs{\tau^Z_i(x)} = \abs{\wb Z^{\ti,x}_\ti - \wh z_i(x)} \le P(x) h$.

		%---------------------------------------------------------------------------			
			\paragraph*{}			
			We now consider $\tau^Y_i(x)$. We decompose it as
				\begin{align*}
					\tau^Y_i(x) 
						& = \Psi^Y_i(y_\ip)(x) - \Phi^Y_i(y_\ip)(x)
						%\\
						%& 
						= y_i(x) - \wt y_i(x) + \wt y_i(x) - \wh y_i(x),
				\end{align*}
			where for convenience we introduce 
			%\begin{align*}
$\wt y_i(x) = \bEix\Big[ y_\ip(X_\tip) + f\big( y_\ip(X_\tip), \wb Z^{\ti,x}_\ti \big) h \Big]$.
			%\end{align*}
			
			We start by estimating the first term of the decomposition. We note that, using It\^o's formula,
				\begin{align*}
					y_i(x) &= \bEix\bigg[ y_\ip(X_\tip) + \int_\ti^\tip f(y,z)(u,X_u) du \bigg]																																			\\
						&= \bEix\bigg[ y_\ip(X_\tip) + f\big(y_\ip(X_\tip),z_\ip(X_\tip)\big) h \bigg] + \bEix\bigg[ -\int_\ti^\tip \int_u^\tip L\big(f(y,z)\big)(t,X_t) dt du \bigg]	- 0		\\
						&= \bEix\bigg[ y_\ip(X_\tip) + f\big(y_\ip(X_\tip), \wb Z^{\ti,x}_\ti \big) h \bigg] 																														\\
							&\qquad + \bEix\bigg[ f\big(y_\ip(X_\tip), z_i(x) \big) - f\big(y_\ip(X_\tip), \wb Z^{\ti,x}_\ti \big) \bigg] h																				\\
							&\qquad + \bEix\bigg[ f\big(y_\ip(X_\tip), z_\ip(X_\tip)\big) - f\big(y_\ip(X_\tip), z_i(x) \big) \bigg] h																					\\
							&\qquad \qquad - \bEix\bigg[ \int_\ti^\tip \int_u^\tip L\big(f(y,z)\big)(t,X_t) dt du \bigg]																										\\
						&= \wt y_i(x) + \alpha_i^{Y,1}(x) + \alpha_i^{Y,2}(x) + \alpha^{Y,3}_i(x).
				\end{align*}
			Using the growth of the functions and the integrability of the forward process, the fourth term can be bounded above by $P(x) h^2$.
			For the second term, using \hfreg we obtain
				\begin{align*}
					\abs{\alpha_i^{Y,1}(x)} \le L_z \abs{z_i(x) - \wb Z^{\ti,x}_\ti} h \le P(x) h^2 .
				\end{align*}
			Indeed, using It\^o's formula we can write
				\begin{align*}
					\wb Z^{\ti,x}_\ti = \bEix\bigg[ \frac1h \int_\ti^\tip z(u,X_u) du \bigg]
						= z_i(x) + \bEix\bigg[ \frac1h \int_\ti^\tip \int_\ti^u L(z)(t,X_t) dt du \bigg]
				\end{align*}
			and the integral term can be bounded above by $P(x) h$.
			For the third term, we write, using again It\^o's formula,
				\begin{align*}
					\alpha_i^{Y,2}(x) = \bEix\bigg[ 0 + \int_\ti^\tip \big[ L\big(f(y,z)\big) - L\big(f(y,z_i(x))\big) \big](u,X_u) du  + 0 \bigg] h,
				\end{align*}
			where naturally $f\big(y,z_i(x)\big) : (t,X) \mapsto f\big(y(t,X),z_i(x)\big)$. We thus estimate that $\abs{\alpha_i^{Y,2}(x)} \le P(x) h^2$.			
			To conclude, we have proved that 
				%\begin{align*}
					$\abs{y_i(x) - \wt y_i(x)} \le P(x) h^2$.
				%\end{align*}

			We now estimate the second term of the decomposition, 
				\begin{align*}
					\wt y_i(x) - \wh y_i(x)	
						&= \bEix\Big[ y_\ip(X_\tip) + f\big( y_\ip(X_\tip), \wb Z^{\ti,x}_\ti \big) h \Big] 																											\\
									&\hspace{5cm} - \bEix\Big[T^h(y_\ip(X_\ip)) + f\big( T^h(y_\ip(X_\ip)), \wh z_i(x) \big) h \Big]																		\\
						&= \bEix\Big[ y_\ip(X_\tip) + f\big( y_\ip(X_\tip), \wb Z^{\ti,x}_\ti \big) h \Big] 																											\\
									&\hspace{3cm} - \bEix\Big[T^h(y_\ip(X_\tip)) + f\big( T^h(y_\ip(X_\tip)), \wb Z^{\ti,x}_\ti \big) h \Big]															\\
							&\quad + \bEix\Big[T^h(y_\ip(X_\tip)) + f\big( T^h(y_\ip(X_\tip)), \wb Z^{\ti,x}_\ti \big) h \Big] 																				\\
									&\hspace{3cm} - \bEix\Big[T^h(y_\ip(X_\ip)) + f\big( T^h(y_\ip(X_\ip)), \wb Z^{\ti,x}_\ti \big) h\Big] 																\\
							& \quad + \bEix\Big[ f\big( T^h(y_\ip(X_\ip)), \wb Z^{\ti,x}_\ti \big) - f\big( T^h(y_\ip(X_\ip)), \wh z_i(x) \big) \Big] h											\\
						&= \beta^{Y,1}_i(x) +\beta^{Y,2}_i(x) + \beta^{Y,3}_i(x).
				\end{align*}
			The first term, $\beta^{Y,1}_i(x)$, can be estimated using the Markov inequality, and be bounded above by $P(x)h^2$.
			{For the second term, 
			since the function $\varphi : X \mapsto T^h(y_\ip(X)) + f\big( T^h(y_\ip(X)), \wb Z^{\ti,x}_\ti \big) h$ is smooth with polynomial-growth derivatives
			and the forward Euler scheme has a local weak error of order $h^2$, $\abs{\beta^{Y,2}_i(x)}$ is bounded above by $P(x) h^2$.} 
			For the third term, we have the bound
				\begin{align*}
					\abs{\beta^{Y,3}_i(x)} \le L_z \abs{\wb Z^{\ti,x}_\ti - \wh z_i(x)} h = L_z\abs{\tau^Z_i(x)}h\le P(x) h^2,
				\end{align*}
			by the previous estimation of $\abs{\tau^Z_i(x)}$.
			So we have 
					$\abs{\wt y_i(x) - \wh y_i(x)} \le P(x) h^2$.

		%---------------------------------------------------------------------------			
			%\paragraph*{}
			Collecting all the estimates yields easily that $\abs{\tau^Y_i(x)} \le P(x) h^2$, which closes the proof.
	\end{proof}

	%_________________________________________________________________________________________________________________________________________________________________________________________
	\begin{proof}[Proof of lemma \ref{lemma---weak.local.error.approximated-expectations}]

		%---------------------------------------------------------------------------			
			We first consider $\epsilon^Z_i$ (defined below). For any $x$, $i$, 
				\begin{align*}
					\epsilon^Z_i(x)
						:& = \Phi^Z_i(y_\ip)(x) - \wb \Phi^Z_i(y_\ip)(x)																																																	\\
						& = \bE^{\ti,x}\big[  T^h(y_\ip(X_\ip)) \Hip^* \big] - \bE^{\ti,x}\big[  T^h(y_\ip(\wb X_\ip)) \wbHip^* \big]																									\\
						& = \bE^{\ti,x}\Big[  T^h(y_\ip(X_\ip)) \ T^{r^h}(\Delta W_\tip)^* \Big] h^{-1} - \bE^{\ti,x}\Big[  T^h(y_\ip(\wt X_\ip)) \ T^{r^h}(\wb{\Delta W}_\tip)^* \Big] h^{-1}					\\
								&\qquad \hspace{1.35cm} + \bE^{\ti,x}\Big[  T^h(y_\ip(\wt X_\ip)) \ \wbHip^*\Big] - \bE^{\ti,x}\Big[  T^h(y_\ip(\wb X_\ip)) \ \wbHip^*\Big]											\\
						& = \epsilon^{Z,g}_i(x) + \epsilon^{Z,\eta}_i(x).
				\end{align*}
			In the equation above,
				\begin{align*}
					X_\ip &= X_\ip(\Delta W_\tip) = x + b(\ti,x) h + \sigma(\ti,x) \Delta W_\tip,																	\\
					\wt X_\ip &= \wt X_\ip(\wb{\Delta W}_\tip) = x + b(\ti,x) h + \sigma(\ti,x) \wb{\Delta W}_\tip, \quad \text{and}\quad
					%\\
					\wb X_\ip 
					%&
					= \Pi(\wt X_\ip).
				\end{align*}
			By assumption $y_\ip$ and $T^h$ are smooth with bounded derivatives. 
			We can therefore write the first term in the error decomposition of $\epsilon^Z_i(x)$ as 
				\begin{align*}
					\epsilon^{Z,g}_i(x) = \bE^{\ti,x}\big[ \varphi(\Delta W_\tip) \big] h^{-1} - \bE^{\ti,x}\big[ \varphi(\wb{\Delta W}_\tip) \big] h^{-1},
				\end{align*}
			with a smooth function $\varphi$.
			We can thus expand both expectations as
				\begin{align*}
					\bE^{\ti,x}\big[ \varphi(\Delta W_\tip) \big]
						= \sum_{k=0}^{q} \frac{\varphi^{(k)}(0)}{k!} \bE^{\ti,x}\big[ (\Delta W_\tip)^{\otimes q} \big] + R^\varphi h^{(q+1)/2} ,
				\end{align*}
			where the $\varphi^{(k)}(0)$ and $R^\varphi$ have polynomial growth in $x$.
			Since $\wb{\Delta W}_\tip$ matches the moments of $\Delta W_\tip \sim \cN(0,h)$ up to order $q$, we have 
				%\begin{align*}
					$\abs{\epsilon^{Z,g}_i(x)} \le P(x) h^{(q+1)/2} h^{-1} = P(x) h^{(q-1)/2}$. 
				%\end{align*}
			Meanwhile, since $T^h$ and $y_\ip$ are Lipschitz, and $\abs{\wt X_\ip-\wb X_\ip}=\abs{\wt X_\ip-\Pi(\wt X_\ip)} \le \eta$,
				\begin{align*}
					\abs{\epsilon^{Z,\eta}_i(x)} 
						\le C \bE^{\ti,x}\Big[ \abs{\wt X_\ip-\wb X_\ip} \abs{\wbHip} \Big]
						& \le C \eta h^{-1/2},
				\end{align*}
			Consequently,
				\begin{align}
				\label{eq:EtaAppears1}
					\abs{\epsilon^Z_i(x)} 
						\le C h^{(q-1)/2} + C \eta h^{-1/2}.
				\end{align}

		%---------------------------------------------------------------------------			
			Next, we consider $\epsilon^Y_i$. For any $x$ and $i$, we decompose it as
				\begin{align*}
					\epsilon^Y_i(x) 
						&= \Phi^Y_i(y_\ip)(x) - \wb \Phi^Y_i(y_\ip)(x)																																					\\
						& =  \bE^{\ti,x}\Big[ T^h(y_\ip(X_\ip)) + f\big(T^h(y_\ip(X_\ip)),\wh z_i(x)\big) h \Big] 																					\\
								&\qquad - \bE^{\ti,x}\Big[ T^h(y_\ip(\wb X_\ip)) + f\big(T^h(y_\ip(\wb X_\ip)),\wb z_i(x)\big) h \Big]													\\
%						& =  \bE^{\ti,x}\Big[ T^h(y_\ip(X_\ip)) + f\big(T^h(y_\ip(X_\ip)),\wh z_i(x)\big) h \Big] 																						\\
%								&\qquad - \bE^{\ti,x}\Big[ T^h(y_\ip(X_\ip)) + f\big(T^h(y_\ip(X_\ip)),\wb z_i(x)\big) h \Big]																		\\
%								&\qquad + \bE^{\ti,x}\Big[ T^h(y_\ip(X_\ip)) + f\big(T^h(y_\ip(X_\ip)),\wb z_i(x)\big) h \Big] 																		\\
%								&\qquad - \bE^{\ti,x}\Big[ T^h(y_\ip(\wb X_\ip)) + f\big(T^h(y_\ip(\wb X_\ip)),\wb z_i(x)\big) h \Big]															\\
						& =  \bE^{\ti,x}\Big[f\big(T^h(y_\ip(X_\ip)),\wh z_i(x)\big) - f\big(T^h(y_\ip(X_\ip)),\wb z_i(x)\big) \Big] h														\\
								&\quad + \bE^{\ti,x}\Big[ T^h(y_\ip(X_\ip)) + f\big(T^h(y_\ip(X_\ip)),\wb z_i(x)\big) h \Big] 																	\\
								&\hspace{2cm} - \bE^{\ti,x}\Big[ T^h(y_\ip(\wt X_\ip)) + f\big(T^h(y_\ip(\wt X_\ip)),\wb z_i(x)\big) h \Big] 											\\
								&\quad + \bE^{\ti,x}\Big[ T^h(y_\ip(\wt X_\ip)) + f\big(T^h(y_\ip(\wt X_\ip)),\wb z_i(x)\big) h \Big]														\\
								&\hspace{2cm} - \bE^{\ti,x}\Big[ T^h(y_\ip(\wb X_\ip)) + f\big(T^h(y_\ip(\wb X_\ip)),\wb z_i(x)\big) h \Big].
				\end{align*}
			Using that $z\mapsto f(\cdot,z)$ is Lipschitz and $\abs{\wt X_\ip-\wb X_\ip}=\abs{\wt X_\ip-\Pi(\wt X_\ip)} \le \eta$ plus the Lipschitz property of $T^h$ and $y_\ip$ we have
				\begin{align*}
					\abs{\epsilon^Y_i(x)}
						&\le  L_z \abs{\epsilon^Z_i(x)} h	
										+ \Big|\bE^{\ti,x}\Big[ T^h(y_\ip(X_\ip)) + f\big(T^h(y_\ip(X_\ip)),\wb z_i(x)\big) h \Big]
										\\
								&\hspace{3.4cm} - \bE^{\ti,x}\Big[ T^h(y_\ip(\wt X_\ip)) + f\big(T^h(y_\ip(\wt X_\ip)),\wb z_i(x)\big) h \Big] \Big| 															
								+ C \eta + C \eta h.
				\end{align*}
			Arguing as above, since $\wb{\Delta W}_\tip$ has the same moments as $\Delta W_\tip$ up to order $q$, we obtain
				\begin{align}
				\nonumber
					\abs{\epsilon^Y_i(x)} 
						&\le P(x) \abs{\epsilon^Z_i(x)} h + P(x) h^{(q+1)/2} + C \eta + C \eta h
						\\
						%\nonumber
						&\le P(x) \big[ h^{(q-1)/2} + \eta h^{-1/2} \big] h + P(x) h^{(q+1)/2} + C \eta + C \eta h
						%\\
						%\nonumber
						%&
						%\le P(x) h^{(q+1)/2} + C \eta + C \eta h^{1/2} + C \eta h
						%\\
						%&
						\label{eq:EtaAppears2}
						\le P(x) h^{(q+1)/2} + C \eta.
				\end{align}
			using the previously obtained estimate on $\epsilon^Z_i$
			and using $h \le h_{\text{max}} \le 1$.
			Since $q \ge 3$, this completes the proof.
%		%---------------------------------------------------------------------------			
%			Summing over $i$ and taking expectations, we have, since for any $p \ge 1$, there exists $C_p$ such that $\sup_i \bE^{0,x_0}[\abs{\wb X_i}^p] \le C_p$, 
%				\begin{align}
%				\label{eq:EtaAppears3}
%					\sum_{i=0}^{N-1} \frac{1}{h} \bE^{0,x_0} \big[ \abs{\epsilon^Y_i(\wb X_i)}^2 \big]
%						&\le C \sum_{i=0}^{N-1} \frac{1}{h} \bE^{0,x_0} \big[ P(\wb X_i) h^{q+1} + \eta^2 \big]
%						\le C h^{q-1} + C \eta^2 h^{-2}.
%				\end{align}
%			Joining all the estimates completes the proof (using $\sqrt{a+b} \le \sqrt{a}+\sqrt{b}$).
	\end{proof}

When the dynamic of the forward process is that of a linear Brownian motion ($b$ and $\sigma$ are constants), one sees that the forward dynamics with merely the quantized increments and no projection, defined by $\wb X_0=x_0$ and 
	\begin{align*}
		\wb X_\ip = \wb X_i + b \, h + \sigma \, \wb{\Delta W}_\ip
	\end{align*}
has a distribution supported by a recombining tree. In that situation, there is no need to define a grid $\Gamma$ with mesh $\eta$ (which should depend on $h$) and no need for the projection $\Pi$ at each step. 
Inspecting the proof above, we see that at each step $\wt X_\ip = \wb X_\ip$, and hence all the terms containing $\eta$ disappear from \eqref{eq:EtaAppears1} and \eqref{eq:EtaAppears2}. 
We thus have the following better error bound.
	\begin{proposition} 	
	\label{proposition---weak.error.estimate.global.good.case}
		Assume that $b$ and $\sigma$ are constant.
		There exists a $C \ge 0$ such that  
			%\begin{align*}
				$\abs{Y_0 - \wb Y_0} \le C h$.
			%\end{align*}
	\end{proposition}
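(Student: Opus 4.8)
The plan is to revisit the proof of Theorem~\ref{proposition---weak.error.estimate.global} given in Section~\ref{sec:proofofmainnumericstheorem} and to observe that, when $b$ and $\sigma$ are constant, the spatial-grid apparatus becomes entirely superfluous. First I would note that the forward scheme $\wb X_\ip = \wb X_i + b\,h + \sigma\,\wb{\Delta W}_\tip$ takes values in the recombining tree generated by the finitely supported increments $\wb{\Delta W}_\tip$, so that the conditional expectations defining $(\wb Y_i,\wb Z_i)$ already reduce to finite sums over that tree; there is then no need to introduce the grid $\Gamma$, its mesh $\eta$, or the projection $\Pi$. In the notation of the proof of Theorem~\ref{proposition---weak.error.estimate.global}, this amounts to the identity $\wt X_\ip = \wb X_\ip$ at every step, so that the ``quantization-only'' and ``quantization $+$ grid-projection'' forward schemes coincide.

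I would then re-run the error decomposition and iteration exactly as in that proof. The stability of the operators $\wb\Phi_i$ uses only assumptions (a)--(b) on $\wbHip$, which are unchanged, so the fundamental estimate \eqref{equation---error.estimate.full.scheme.general.form},
\begin{align*}
	\abs{Y_0 - \wb Y_0}^2 \le C \sum_{i=0}^{N-1} \frac1h \, \bE^{0,x_0}\big[\, \abs{\tau^Y_i(\wb X_i)}^2 + \abs{\epsilon^Y_i(\wb X_i)}^2\,\big],
\end{align*}
remains valid, as do the moment bounds $\sup_i \bE^{0,x_0}[\abs{\wb X_i}^p] \le C_p$ (the i.i.d.\ tree increments have finite moments of every order). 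Lemma~\ref{lemma---weak.local.error.time-discretization} applies verbatim, giving $\abs{\tau^Y_i(x)} \le P(x) h^2$. For Lemma~\ref{lemma---weak.local.error.approximated-expectations} I would inspect its proof: since now $\wt X_\ip = \wb X_\ip$, the remainder $\epsilon^{Z,\eta}_i$ vanishes, and with it every $C\eta$ contribution in \eqref{eq:EtaAppears1} and \eqref{eq:EtaAppears2}; what remains is $\abs{\epsilon^Z_i(x)} \le P(x) h^{(q-1)/2}$ and hence $\abs{\epsilon^Y_i(x)} \le P(x) h^{(q+1)/2}$, using only that $\wb{\Delta W}_\tip$ matches the moments of $\Delta W_\tip \sim \cN(0,h)$ up to order $q$.

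Finally, since $\sup_i \bE^{0,x_0}[P(\wb X_i)^2] \le C$, these local bounds give $\bE^{0,x_0}[\abs{\tau^Y_i(\wb X_i)}^2] \le C h^4$ and $\bE^{0,x_0}[\abs{\epsilon^Y_i(\wb X_i)}^2] \le C h^{q+1}$, so that with $N = T/h$ and $h \le h_{\text{max}} \le 1$,
\begin{align*}
	\abs{Y_0 - \wb Y_0}^2 \le C \sum_{i=0}^{N-1} \frac1h \big( h^4 + h^{q+1} \big) \le C\,T\,\big( h^2 + h^{q-1} \big) \le C h^2,
\end{align*}
the last step because $q \ge 3$ (indeed $q = 5$ for the trinomial tree used here). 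Taking square roots yields $\abs{Y_0 - \wb Y_0} \le C h$. I do not expect a genuine obstacle beyond careful bookkeeping: the only points needing a moment's attention are checking that removing $\Pi$ does not disturb the polynomial-growth control of $\wb X_i$ relied on throughout, and confirming that the moment-matching order $q$ of the tree is high enough ($q \ge 3$) for the expectation-approximation errors, after the $1/h$ weighting and the summation over $N = O(1/h)$ steps, to remain $O(h^2)$.
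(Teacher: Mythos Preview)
Your proposal is correct and follows essentially the same approach as the paper: the paper's argument for this proposition is precisely to observe that when $b$ and $\sigma$ are constant the forward scheme is tree-supported so that $\wt X_\ip = \wb X_\ip$, whence all the $\eta$-terms in \eqref{eq:EtaAppears1} and \eqref{eq:EtaAppears2} vanish and the general estimate \eqref{equation---error.estimate.full.scheme.general.form} collapses to the claimed $Ch$ bound. Your write-up simply makes the final summation explicit.
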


%%%%%%%%%%%%%%%%%%%%%%%%%%%%%%%%%%%%%%%%%%%%%%%%%%%%%%%%%%%%%%%%%%%%%%%%%%%%%%%%%%%%%%%%%%%%%%%%%%%%%%%%%%%%%%%%%%%%%%%%%%%%%%%%%%%%%%%%%%%%%%%%%%%
%%%%%%%%%%%%%%%%%%%%%%%%%%%%%%%%%%%%%%%%%%%%%%%%%%%%%%%%%%%%%%%%%%%%%%%%%%%%%%%%%%%%%%%%%%%%%%%%%%%%%%%%%%%%%%%%%%%%%%%%%%%%%%%%%%%%%%%%%%%%%%%%%%%
%%%%%%%%%%%%%%%%%%%%%%%%%%%%%%%%%%%%%%%%%%%%%%%%%%%%%%%%%%%%%%%%%%%%%%%%%%%%%%%%%%%%%%%%%%%%%%%%%%%%%%%%%%%%%%%%%%%%%%%%%%%%%%%%%%%%%%%%%%%%%%%%%%%

\section{Conclusions}

In this work, we have introduced the Full-Projection explicit time-discretization scheme for the approximation of FBSDEs/nonlinear PDEs with non-globally Lipschitz drivers of polynomial growth. This scheme covers a class of nonlinearities found in important applications/PDE models, e.g. Allen--Cahn equation, the FitzHugh--Nagumo equations or the Fisher--KPP equation, to mention a few.

We provided a complete study of all the errors related to the fully implementation of the FBSDE based algorithm. We highlighted several important numerical stability features of the scheme, in particular, its ability to preserve the original PDE's dissipative nature (when $M_y<0$ in \hfmon).

%%%%%%%%%%%%%%%%%%%%%%%%%%%%%%%%%%%%%%%%%%%%%%%%%%%%%%%%%%%%%%%%%%%%%%%%%%%%%%%%%%%%%%%%%%%%%%%%%%%%%%%%%%%%%%%%%%%%%%%%%%%%%%%%%%%%%%%%%%%%%%%%%%%
%%%%%%%%%%%%%%%%%%%%%%%%%%%%%%%%%%%%%%%%%%%%%%%%%%%%%%%%%%%%%%%%%%%%%%%%%%%%%%%%%%%%%%%%%%%%%%%%%%%%%%%%%%%%%%%%%%%%%%%%%%%%%%%%%%%%%%%%%%%%%%%%%%%
%%%%%%%%%%%%%%%%%%%%%%%%%%%%%%%%%%%%%%%%%%%%%%%%%%%%%%%%%%%%%%%%%%%%%%%%%%%%%%%%%%%%%%%%%%%%%%%%%%%%%%%%%%%%%%%%%%%%%%%%%%%%%%%%%%%%%%%%%%%%%%%%%%%
%%%%%%%%%%%%%%%%%%%%%%%%%%%%%%%%%%%%%%%%%%%%%%%%%%%%%%%%%%%%%%%%%%%%%%%%%%%%%%%%%%%%%%%%%%%%%%%%%%%%%%%%%%%%%%%%%%%%%%%%%%%%%%%%%%%%%%%%%%%%%%%%%%%
%%%%%%%%%%%%%%%%%%%%%%%%%%%%%%%%%%%%%%%%%%%%%%%%%%%%%%%%%%%%%%%%%%%%%%%%%%%%%%%%%%%%%%%%%%%%%%%%%%%%%%%%%%%%%%%%%%%%%%%%%%%%%%%%%%%%%%%%%%%%%%%%%%%
\bibliographystyle{alpha} %wmaainf alpha plain

\bibliography{FullBSDEpolyYtamed-FPSClean} 
%%%%%%%%%%%%%%%%%%%%%%%%%%%%%%%%%%%%%%%%%%%%%%%%%%%%%%%%%%%%%%%%%%%%%%%%%%%%%%%%%%%%%%%%%%%%%%%%%%%%%%%%%%%%%%%%%%%%%%%%%%%%%%%%%%%%%%%%%%%%%%%%%%%
%%%%%%%%%%%%%%%%%%%%%%%%%%%%%%%%%%%%%%%%%%%%%%%%%%%%%%%%%%%%%%%%%%%%%%%%%%%%%%%%%%%%%%%%%%%%%%%%%%%%%%%%%%%%%%%%%%%%%%%%%%%%%%%%%%%%%%%%%%%%%%%%%%%
%%%%%%%%%%%%%%%%%%%%%%%%%%%%%%%%%%%%%%%%%%%%%%%%%%%%%%%%%%%%%%%%%%%%%%%%%%%%%%%%%%%%%%%%%%%%%%%%%%%%%%%%%%%%%%%%%%%%%%%%%%%%%%%%%%%%%%%%%%%%%%%%%%%
%%%%%%%%%%%%%%%%%%%%%%%%%%%%%%%%%%%%%%%%%%%%%%%%%%%%%%%%%%%%%%%%%%%%%%%%%%%%%%%%%%%%%%%%%%%%%%%%%%%%%%%%%%%%%%%%%%%%%%%%%%%%%%%%%%%%%%%%%%%%%%%%%%%
%%%%%%%%%%%%%%%%%%%%%%%%%%%%%%%%%%%%%%%%%%%%%%%%%%%%%%%%%%%%%%%%%%%%%%%%%%%%%%%%%%%%%%%%%%%%%%%%%%%%%%%%%%%%%%%%%%%%%%%%%%%%%%%%%%%%%%%%%%%%%%%%%%%
\end{document}